\documentclass[11pt,a4paper]{article}
\usepackage{amsmath}
\usepackage{amssymb}
\usepackage{amsthm}
\usepackage{amsfonts}
\usepackage{latexsym}
\usepackage{verbatim} 
\usepackage{xcolor}
\usepackage[a4paper,width=16cm,top=2cm,bottom=2cm,footskip=1cm
]{geometry}

\usepackage[flushmargin]{footmisc}
\usepackage{color}                    
\usepackage{esvect}
\usepackage{mathrsfs}

\theoremstyle{plain}
\newtheorem{thm}{Theorem}[section]
\newtheorem{cor}{Corollary}[section]
\newtheorem{lem}{Lemma}[section]

\newtheorem{ass}{Assumption}[section]
\theoremstyle{remark}

\theoremstyle{definition}
\newtheorem{defn}{Definition}[section]

\newtheorem{rem}{Remark}[section]

\newcommand{\Complex}{\mathbb C}
\newcommand{\Real}{\mathbb R}
\newcommand{\N}{\mathbb N}
\newcommand{\ddbar}{\overline\partial}
\newcommand{\pr}{\partial}
\newcommand{\ol}{\overline}
\newcommand{\Td}{\widetilde}
\newcommand{\norm}[1]{\left\Vert#1\right\Vert}
\newcommand{\abs}[1]{\left\vert#1\right\vert}
\newcommand{\set}[1]{\left\{#1\right\}}
\newcommand{\To}{\rightarrow}

\hyphenation{ma-ni-fold ma-ni-folds re-pre-sen-ta-tion ope-ra-tor sa-ti-sfy-ing re-pre-sen-ta-tions mul-ti-pli-ci-ties va-lu-ed com-pa-ti-ble po-la-ri-za-tion par-ti-cu-lar sti-mu-la-ting tri-vial dif-fe-ren-tial va-ni-shing me-ta-li-near na-tu-ral-ly e-qui-va-len-tly ge-ne-ra-li-ty na-tu-ral fa-mi-ly geo-me-tric uni-ta-ri-ly e-qui-va-rian-tly li-nea-ri-za-tion dia-go-nal geo-me-try nor-ma-li-zed e-xi-sten-ce or-tho-go-nal}
\title{Functional calculus and quantization commutes with reduction for Toeplitz operators on CR manifolds}
\author{Andrea Galasso and Chin-Yu Hsiao\footnote{\noindent{\bf Address:} Room 407, Chee-Chun Leung Cosmology Hall, National Taiwan University; {\bf ORCID iD:} 0000-0002-5792-1674; {\bf e-mail}: andrea.galasso@ncts.ntu.edu.tw {\bf Address:} Institute of Mathematics, Academia Sinica, 6F, Astronomy-Mathematics Building, No.1, Sec.4, Roosevel: {\bf ORCHID iD:} 0000-0002-1781-0013; {\bf email}: chsiao@math.sinica.edu.tw; chinyu.hsiao@gmail.com}}
\date{}

\begin{document}
\maketitle

\begin{abstract} Given a CR manifold with non-degenerate Levi form, we show that the operators of the functional calculus for Toeplitz operators are complex Fourier integral operators of Szeg\H{o} type. As an application, we establish semi-classical spectral dimensions for Toeplitz operators. We then consider  a CR manifold with a compact Lie group action $G$ and we establish quantization commutes with reduction for Toeplitz operators. Moreover, we also compute semi-classical spectral dimensions for $G$-invariant Toeplitz operators.
\end{abstract}
\tableofcontents
\bigskip
\textbf{Keywords:} CR manifolds, Toeplitz operators, group actions

\textbf{Mathematics Subject Classification:} 32Vxx, 32A25, 53D50

\textbf{Data Availability Statements:} Data sharing not applicable to this article as no datasets were generated or analysed during the current study.

\section{Introduction} 

The study of Toeplitz operators plays an important role in modern mathematical research, theoretical physics and it is closely related to various domain of mathematics such as operator theory, spectral theory, analysis, geometry and geometric quantization theory. Boutet de Monvel and Guillemin~\cite{bg} have been set up as an extension of the classical Toeplitz operators on Hardy spaces. They consider $\Pi_{\Sigma}P\Pi_{\Sigma}$, where $\Pi_{\Sigma}$ is the Szeg\H{o} projection associated to a given symplectic conic subset $\Sigma$ and $P$ is a pseudodifferential operator. The great interest for us is that the Toeplitz operators enjoy similar properties as pseudodifferential operators (symbolic caculus, Weyl law,...). In this work, we will study the functional calculus for Toeplitz operators on a CR manifold with non-degenerate Levi form which is fundamental for our study of Toeplitz operators.  We show that the operators of the functional calculus for Toeplitz operators are complex Fourier integral operators of Szeg\H{o} type. As an application, we establish semi-classical spectral dimensions for Toeplitz operators. We then consider  a CR manifold with a compact Lie group action $G$ and we establish quantization commutes with reduction for Toeplitz operators. Moreover, we also compute semi-classical spectral dimensions for $G$-invariant Toeplitz operators. 

Let us recall some prior literature about micro-local theory of Toeplitz operators, without the pretense of being complete. Toeplitz operators on strictly pseudoconvex domain were introduced in the monograph~\cite{bg}, in the survey \cite{p} it is  reviewed some recent results concerning eigenfunction asymptotics in this context, largely based on the generalization to the almost complex symplectic category by Shiffman and Zelditch. In the presence of symmetries, in \cite{mz} some Toeplitz operator type properties in semiclassical analysis was investigated based on the analytic localization techniques developed by Bismut and Lebeau. We refer to \cite{mm} for more comprehensive bibliographic notes, our approach is based on the Boutet de Monvel–Sj\"ostrand parametrix for the Szeg\H{o} kernel.

We now formulate the main results. They are based on \cite{hsiao}, \cite{hsiaohuang} and \cite{gh} where it is studied Toeplitz operators for CR manifolds with non-degenerate Levi-form, in particular in \cite{gh} we studied the properties of the corresponding star-product inspired by previous results as in \cite{bms} and \cite{schli}. We refer the reader to Section~\ref{s:prelim} for some notations and terminology used here. 

\subsection{Main results: Functional calculus for Toeplitz operators} \label{s-gue211217yyd}

Let $(X, T^{1,0}X)$ be a compact, orientable CR manifold of dimension $2n+1$, $n\geq 1$, where $T^{1,0}X$ is a CR structure of $X$. Since $X$ is orientable, there always exists a real non-vanishing $1$-form $\omega_0\in\mathcal{C}^{\infty}(X,T^*X)$ so that $\langle\,\omega_0(x)\,,\,u\,\rangle=0$, for every $u\in T^{1,0}_xX\oplus T^{0,1}_xX$, for every $x\in X$. We fix $\omega_0$ and let $\mathcal{L}_x$ be the Levi form at $x\in X$  with respect to $\omega_0$ (see \eqref{e-gue211216ycdp}). In this work, we assume that the Levi form is non-degenerate. The Reeb vector field $R\in\mathcal{C}^\infty(X,TX)$ is defined to be the non-vanishing vector field determined by 
\[	\omega_0(R)=-1\quad \text{ and }\quad 		\mathrm{d}\omega_0(R,\cdot)\equiv0\ \ \mbox{on $TX$}\,. \]
Fix a smooth Hermitian metric $\langle\, \cdot \,|\, \cdot \,\rangle$ on $TX\otimes\mathbb{C}$ so that $T^{1,0}X$ is orthogonal to $T^{0,1}X$, $\langle\, u \,|\, v \,\rangle$ is real if $u, v$ are real tangent vectors, $\langle\,R\,|\,R\,\rangle=1$ and $R$ is orthogonal to $T^{1,0}X\oplus T^{0,1}X$. Let $(\,\cdot\,|\,\cdot\,)$ be the $L^2$ inner product on $\Omega^{0,q}(X)$ induced by $\langle\, \cdot \,|\, \cdot \,\rangle$. Let $\Box^{q}_{b}$ denote the (Gaffney extension) of the Kohn Laplacian given by \eqref{e-suIX}.
Let
\begin{equation}\label{e-suXI-Im}
S^{(q)}:L^2_{(0,q)}(X)\To{\rm Ker\,}\Box^q_b
\end{equation}
be the orthogonal projection with respect to the $L^2$ inner product $(\,\cdot\,|\,\cdot\,)$. In this work, we assume 

\begin{ass}\label{a-gue211126yyd}
The Levi form is non-degenerate of constant signature $(n_-,n_+)$, where $n_-$ denotes the 
number of negative eigenvalues of the Levi form and $n_+$ denotes the 
number of positive eigenvalues of the Levi form. We always let $q=n_-$ and suppose that 
$\Box^q_b$ has $L^2$ closed range. 
\end{ass} 

Let
$L^m_{{\rm cl\,}}(X,T^{*0,q}X\boxtimes (T^{*0,q}X)^*)$
denote the space of classical
pseudodifferential operators on $X$ of order $m$ from sections of
$T^{*0,q}X$ to sections of $T^{*0,q}X$. Let $P\in L^m_{{\rm cl\,}}(X,T^{*0,q}X\boxtimes (T^{*0,q}X)^*)$. We write $\sigma^0_P$ to denote the principal symbol of $P$. 

Let $P\in L^{0}_{{\rm cl\,}}(X,T^{*0,q}X\boxtimes(T^{*0,q}X)^*)$ be a self-adjoint classical pseudodifferential operator on $X$ with scalar principal symbol.
 The Toeplitz operator is given by 
 \begin{equation}\label{e-gue211126ycdgm}
T^{(q)}_P:=S^{(q)}\circ P\circ S^{(q)}: L^2_{(0,q)}(X)\To{\rm Ker\,}\Box^q_b
\end{equation}
and let $T^{(q)}_P(x,y)\in\mathcal{D}'(X\times X,T^{*0,q}X\boxtimes(T^{*0,q}X)^*)$ be the distribution kernel of $T^{(q)}_P$ with respect to $(\,\cdot\,|\,\cdot\,)$. 

Given a self-adjoint operator 
\[A: {\rm Dom\,}A\subset H\To H\] 
where $H$ is a Hilbert space, let $\mathrm{Spec}(A)$ denote the spectrum of $A$.  Let $\tau\in\mathcal{C}^\infty_c(\mathbb R)$ and let $\tau(A)$ denote the functional calculus of $A$. Let 
\begin{equation}\label{e-gue211201yydm}
E_{\tau}(A)={\rm Range\,}(\tau(A))\subset H.
\end{equation}
Let $I\subset\mathbb R$ be an interval. 
Consider an increasing sequence of non-negative test functions $\chi_n\,:\,\mathbb{R}\rightarrow \mathbb{R}$ such that $\mathrm{supp}(\chi_n)\Subset I$ converging pointwise to the characteristic function on $I$. By Theorem $2.5.5$ in \cite{d} the sequence of operators $\chi_n(A)$ converges strongly to a canonically determined projection $\Pi_{I}(A)$, it is called {\em spectral projection} for $I$, its range space is denoted by $E_{I}(A)$.  

For every $k\in\mathbb Z$, let $\Psi_k(X)$ denote the space of all complex Fourier integral operators of Szeg\H{o} type of order $k$ (see Definition~\ref{d-gue210624yyd}). For $A\in\Psi_k(X)$, let $\sigma^0_A$ to denote the leading term of the symbol of $A$, let $\sigma^0_{A,\mp}$ to denote the leading term of the symbol of $A_{\mp}$ (see Definition~\ref{d-gue210624yyd}).
One of the main results of this work is the following 

\begin{thm}\label{t-gue211217yyd}
Let $(X, T^{1,0}X)$ be a compact, orientable CR manifold of dimension $2n+1$, $n\geq 1$. Let $P\in L^{0}_{{\rm cl\,}}(X,T^{*0,q}X\boxtimes(T^{*0,q}X)^*)$ be a self-adjoint classical pseudodifferential operator on $X$ with scalar principal symbol and let $\tau\in\mathcal C^\infty_c(\mathbb R)$ with $0\notin{\rm supp\,}\tau$. With the same notations above, under Assumption~\ref{a-gue211126yyd}, we have $\tau(T^{(q)}_P)(x,y)\in\Psi_k(X)$, 
\begin{equation}\label{e-gue211217yyd}
\mbox{$\sigma^0_{\tau(T^{(q)}_{P}),-}(x,x)=\frac{1}{2}\pi^{-n-1}\abs{{\rm det\,}\mathcal L_x}\tau(\sigma^0_P(x,-\omega_0(x)))\pi_{x,n_-}$, for every $x\in X$},\\
\end{equation}
and if $q=n_-=n_+$, 
\begin{equation}\label{e-gue211217yydI}
\mbox{$\sigma^0_{\tau(T^{(q)}_{P}),+}(x,x)=\frac{1}{2}\pi^{-n-1}\abs{{\rm det\,}\mathcal L_x}\tau(\sigma^0_P(x,\omega_0(x)))\pi_{x,n_+}$, for every $x\in X$},
\end{equation}
where $\pi_{x,n_-}$ and $\pi_{x,n_+}$ are given by \eqref{tau140530} below and ${\rm det\,}\mathcal L_x=\lambda_1(x)\cdots\lambda_n(x)$, $\lambda_j(x)$, $j=1,\ldots,n$, are the eigenvalues of $\mathcal{L}_x$ with respect to $\langle\,\cdot\,|\,\cdot\,\rangle$. 
\end{thm}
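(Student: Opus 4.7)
The strategy is to combine the Helffer--Sj\"ostrand formula for the smooth functional calculus with the composition and symbolic calculus for complex Fourier integral operators of Szeg\H{o} type developed in \cite{hsiao}, \cite{hsiaohuang}, \cite{gh}. Since $0\notin\mathrm{supp}\,\tau$ and $T^{(q)}_P$ annihilates $(\mathrm{Ker}\,\Box^q_b)^\perp$, the functional calculus gives $\tau(T^{(q)}_P) = S^{(q)}\tau(T^{(q)}_P)S^{(q)}$, so everything takes place inside the Toeplitz/Szeg\H{o} calculus. Picking an almost analytic extension $\tilde\tau\in\mathcal{C}^\infty_c(\mathbb{C})$ of $\tau$, we write
\[
\tau(T^{(q)}_P) = -\frac{1}{\pi}\int_{\mathbb{C}}\bar\partial\tilde\tau(z)\,(T^{(q)}_P - z)^{-1}\,dL(z),
\]
reducing the theorem to a uniform analysis of the resolvent as $z$ approaches the real axis.

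Next, for each $z$ outside the real spectrum of $T^{(q)}_P$, the plan is to construct a resolvent parametrix $R(z)\in\Psi_0(X)$ satisfying $(T^{(q)}_P - z)R(z)\equiv S^{(q)}$ modulo smoothing operators. In the Szeg\H{o}-type calculus, composition with $S^{(q)}$ restricts symbols to the characteristic conic set $\Sigma=\Sigma_-\cup\Sigma_+$, with $\Sigma_+$ present only when $n_-=n_+$. Since $P$ has scalar principal symbol, the leading symbols of $T^{(q)}_P$ along $\Sigma_\mp$ are $\sigma^0_P(x,\mp\omega_0(x))\pi_{x,n_\mp}$, so the transport equation can be inverted at the leading-symbol level, yielding on the diagonal
\[
\sigma^0_{R(z),\mp}(x,x) = \tfrac12\pi^{-n-1}|\det\mathcal L_x|\,\bigl(\sigma^0_P(x,\mp\omega_0(x))-z\bigr)^{-1}\pi_{x,n_\mp},
\]
where the prefactor $\tfrac12\pi^{-n-1}|\det\mathcal L_x|\pi_{x,n_\mp}$ is the Boutet de Monvel--Sj\"ostrand leading coefficient of $S^{(q)}(x,x)$ on the diagonal. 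Lower-order symbols are solved recursively, and a Borel-type summation produces the full parametrix.

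Inserting $R(z)$ in the Helffer--Sj\"ostrand integral and using the Cauchy--Pompeiu identity $-\pi^{-1}\!\int_{\mathbb{C}}\bar\partial\tilde\tau(z)(\lambda-z)^{-1}\,dL(z)=\tau(\lambda)$ for real $\lambda$, the leading symbols of $\tau(T^{(q)}_P)$ along $\Sigma_\mp$ come out exactly as asserted in \eqref{e-gue211217yyd}--\eqref{e-gue211217yydI}, while the remainder from the parametrix becomes a genuine smoothing operator after $z$-integration because $\bar\partial\tilde\tau$ vanishes to infinite order on the real axis. Membership $\tau(T^{(q)}_P)\in\Psi_k(X)$ then follows from the fact that an asymptotic sum of Szeg\H{o}-type FIOs converges in the appropriate symbol topology.

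The main technical obstacle is the \emph{uniform} analysis of the parametrix $R(z)$: one needs seminorm estimates on its full symbol that grow at most polynomially in $\mathrm{dist}(z,\mathrm{Spec}(T^{(q)}_P))^{-1}$, so that the rapid vanishing of $\bar\partial\tilde\tau$ near $\mathbb{R}$ absorbs them after integration. A related point is that the smoothing remainder produced by the parametrix also acquires inverse-distance growth, and one has to verify that after pairing with $\bar\partial\tilde\tau$ it still lands in the class of smoothing operators. These are the Szeg\H{o}-FIO analogues of classical arguments for smooth functional calculus of pseudodifferential operators, and should be tractable given the calculus established in \cite{hsiao}, \cite{gh}.
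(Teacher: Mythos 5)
Your proposal follows essentially the same route as the paper: the Helffer--Sj\"ostrand formula, a resolvent parametrix $B_z$ constructed recursively in the Szeg\H{o}-type calculus with leading symbol $(z-\sigma^0_P(x,-\omega_0(x)))^{-1}s^0_-(x,x)$ on the diagonal, and the Cauchy formula to recover $\tau(\sigma^0_P(x,\mp\omega_0(x)))$ after integration against $\ddbar\Td\tau$. The uniformity issue you correctly flag as the main technical obstacle is precisely what the paper resolves by introducing the $z$-dependent symbol classes $S^m_{z,{\rm cl\,}}$ and the operator class $\Psi_{k,z}(X)$, together with the $\abs{{\rm Im\,}z}^{-N}$ Sobolev estimates of Lemmas~\ref{l-gue211205yyd}--\ref{l-gue211205yydIIa}, so your argument is correct and complete up to carrying out those estimates.
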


\begin{rem}\label{r-gue211217yyd}
It should be mentioned that if $n_-=n_+$ or $\abs{n_--n_+}>1$, then $\Box^q_b$ 
has $L^2$ closed range (see~\cite{Koh86}).
\end{rem}

\subsection{Main results: Semi-classical spectral dimensions for Toeplitz operators} \label{s-gue211217yydI}

We now assume that $X$ admits a CR and transversal locally free $S^1$-action $e^{i\theta}$. Let $T$ be the vector field on $X$ given by 
\[(Tu)(x):=\frac{\pr}{\pr\theta}u(e^{i\theta}\circ x)|_{\theta=0},\ \ \forall u\in\mathcal{C}^\infty(X).\]
We take $\omega_0$ so that the associated Reeb vector field $R$ is equal to $T$. Assume that the Hermitian metric $\langle\,\cdot\,|\,\cdot\,\rangle$ on $TX \otimes\mathbb{C}$ is $S^1$-invariant. For every $m\in \mathbb{Z}$, set 
\[\Omega^{0,q}_m(X):=\set{u\in\Omega^{0,q}(X);\, (e^{i\theta})^*u=e^{im\theta}u,\ \ \forall e^{i\theta}\in S^1}.\]
 Let $L^2_{(0,q),m}(X)$ be
the completion of $\Omega^{0,q}_m(X)$ with respect to $(\,\cdot\,|\,\cdot\,)$. 
 Put 
\[({\rm Ker\,}\Box^{q}_b)_m:=({\rm Ker\,}\Box^{q}_b)\cap L^2_{(0,q),m}(X).\]
The $m$-th Szeg\H{o} projection is the orthogonal projection 
$S^{(q)}_{m}:L^2_{(0,q)}(X)\To ({\rm Ker\,}\Box^{(q)}_b)_m$
with respect to $(\,\cdot\,|\,\cdot\,)$. Let $P\in L^{0}_{{\rm cl\,}}(X,T^{*0,q}X\boxtimes(T^{*0,q}X)^*)^{S^1}$ be a self-adjoint $S^1$-invariant classical pseudodifferential operator on $X$ with scalar principal symbol (see Definition~\ref{d-gue210729yyd}). Put 
\[T^{(q)}_{P,m}:=S^{(q)}_{m}\circ P\circ S^{(q)}_m: L^2_{(0,q)}(X)\To L^2_{(0,q),m}(X).\]

We also obtain an asymptotic expansion for the functional calculus of $T^{(q)}_{P,m}$ as $m\To+\infty$ (see Theorem~\ref{t-gue211214yyds}). Moreover, we have the following semi-classical spectral dimensions for Toeplitz operators (see Section~\ref{s-gue211216yyd})

\begin{thm}\label{t-gue211217yydI}
With the notations and assumptions above, assume that $X$ is connected and $X_{{\rm reg\,}}\neq\emptyset$, where 
$X_{{\rm reg\,}}=\set{x\in X;\, e^{i\theta}x\neq x, \forall\theta\in]0,2\pi[}$. Let $P\in L^{0}_{{\rm cl\,}}(X,T^{*0,q}X\boxtimes(T^{*0,q}X)^*)^{S^1}$ be a self-adjoint $S^1$-invariant classical pseudodifferential operator on $X$ with scalar principal symbol. Let $I\subset\mathbb R$ be an open bounded interval with $0\notin\overline{I}$. We have 
\begin{equation}\label{e-gue211217yydII}
\lim_{m\To+\infty}m^{-n}{\rm dim\,}E_I(T^{(q)}_{P,m})=\frac{1}{2}\pi^{-n-1}\int_{\set{x\in X;\, \sigma^0_P(x,-\omega_0(x))\in I}}\abs{{\rm det\,}\mathcal L_x}\mathrm{dV}_X(x),
\end{equation}
where $\mathrm{dV}_X$ is the volume form on $X$ induced by $\langle\,\cdot\,|\,\cdot\,\rangle$. 
\end{thm}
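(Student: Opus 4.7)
The strategy is to sandwich the spectral projection $\Pi_I(T^{(q)}_{P,m})$ between two smooth functional calculus operators to which the asymptotic expansion for $\tau(T^{(q)}_{P,m})$ in Theorem~\ref{t-gue211214yyds} applies, then take traces and pass to the limit. Since $I$ is open and bounded with $0\notin\overline I$, for each $\varepsilon>0$ I would pick $\tau^-_\varepsilon,\tau^+_\varepsilon\in\mathcal C^\infty_c(\mathbb R)$ with $0\notin\mathrm{supp}\,\tau^\pm_\varepsilon$, $\tau^-_\varepsilon\leq\mathbf 1_I\leq\tau^+_\varepsilon$, $\tau^-_\varepsilon\uparrow\mathbf 1_I$ and $\tau^+_\varepsilon\downarrow\mathbf 1_{\overline I}$ pointwise as $\varepsilon\to 0$. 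By the spectral theorem,
\[\mathrm{tr}\,\tau^-_\varepsilon(T^{(q)}_{P,m})\;\leq\;\dim E_I(T^{(q)}_{P,m})\;\leq\;\mathrm{tr}\,\tau^+_\varepsilon(T^{(q)}_{P,m}).\]

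Next, for any $\tau\in\mathcal C^\infty_c(\mathbb R)$ with $0\notin\mathrm{supp}\,\tau$, I would invoke Theorem~\ref{t-gue211214yyds} to get a semi-classical expansion of the diagonal kernel $\tau(T^{(q)}_{P,m})(x,x)$ as $m\to+\infty$, whose leading term is read off from the Szeg\H o-type principal symbol formula in \eqref{e-gue211217yyd}. Taking the pointwise matrix trace (the projection $\pi_{x,n_-}$ in \eqref{tau140530} has rank one) and integrating against $\mathrm{dV}_X$ should give
\[m^{-n}\,\mathrm{tr}\,\tau(T^{(q)}_{P,m})\;=\;\frac{1}{2}\pi^{-n-1}\int_X\tau\!\bigl(\sigma^0_P(x,-\omega_0(x))\bigr)\,\abs{{\rm det\,}\mathcal L_x}\,\mathrm{dV}_X(x)\;+\;O(m^{-1}).\]
Only the $(-)$-branch of Theorem~\ref{t-gue211217yyd} contributes: for $q=n_-$ and $m\to+\infty$, the wavefront of $S^{(q)}_m$ is localized near $-\omega_0$ inside the characteristic variety of $\Box^q_b$, while the $(+)$-branch yields an $O(m^{-\infty})$ contribution on the principal stratum $X_{\mathrm{reg\,}}$. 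Connectedness of $X$ together with $X_{\mathrm{reg\,}}\neq\emptyset$ guarantees that the leading term is globally of size $m^n$ and picks up the claimed integrand.

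Combining the two displays, sending $m\to+\infty$ first and then $\varepsilon\to 0$ with monotone convergence, I obtain
\[\frac{1}{2}\pi^{-n-1}\!\!\int_{\set{x\in X:\,\sigma^0_P(x,-\omega_0(x))\in I}}\!\!\abs{{\rm det\,}\mathcal L_x}\,\mathrm{dV}_X\;\leq\;\liminf_{m\to+\infty}m^{-n}\dim E_I(T^{(q)}_{P,m}),\]
and the symmetric $\limsup$-bound with $I$ replaced by $\overline I$. The discrepancy is concentrated on $\set{x\in X:\,\sigma^0_P(x,-\omega_0(x))\in\partial I}$; I would eliminate it by an additional sandwich with open intervals $I_\delta\Subset I\Subset I'_\delta$ whose endpoints are regular values of $x\mapsto\sigma^0_P(x,-\omega_0(x))$ and letting $\delta\to 0$, yielding \eqref{e-gue211217yydII}.

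\textbf{Main obstacle.} The technical heart of the argument is extracting the precise leading-order trace asymptotic, with the constant $\frac{1}{2}\pi^{-n-1}$ and the correct $(-)$-branch, from Theorem~\ref{t-gue211214yyds}: one needs careful bookkeeping of the principal symbol of $\tau(T^{(q)}_{P,m})$ on the diagonal together with a remainder estimate uniform enough for the squeezing in the final step. Once the smooth trace asymptotic is in hand, the spectral sandwich, the measure-theoretic handling of $\partial I$, and the passage to the limit are routine.
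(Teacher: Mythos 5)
Your proposal is correct and follows essentially the same route as the paper's proof (Lemma~\ref{l-gue211215yyd}, Lemma~\ref{l-gue211215yydI} and Theorem~\ref{t-gue211215yyd}): approximate $\mathbf{1}_I$ from inside and outside by smooth cutoffs avoiding $0$, feed these into the diagonal expansion of Theorem~\ref{t-gue211214yyds}/Corollary~\ref{c-gue211214yyds}, integrate, and pass to the limit — the only difference being that you obtain the sandwich $\mathrm{tr}\,\tau^-(T^{(q)}_{P,m})\le\dim E_I(T^{(q)}_{P,m})\le\mathrm{tr}\,\tau^+(T^{(q)}_{P,m})$ directly from operator positivity, whereas the paper proves pointwise trace inequalities on the kernel diagonal via Cauchy--Schwarz, using $\tau(T^{(q)}_{P,m})^2$ and $2\tau(T^{(q)}_{P,m})-\tau(T^{(q)}_{P,m})^2$ as comparison kernels, and then integrates. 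Two minor remarks: the paper only establishes pointwise convergence of $m^{-n}\tau(T^{(q)}_{P,m})(x,x)$ together with a uniform $O(m^{n})$ bound (so your claimed integrated $O(m^{-1})$ remainder is stronger than needed and not uniform on $X\setminus X_{\rm reg\,}$, where the expansion carries extra oscillatory terms), and the boundary issue you flag for $\set{x;\,\sigma^0_P(x,-\omega_0(x))\in\partial I}$ is present in the paper's argument as well, which tacitly assumes this set has measure zero.
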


\subsection{Main results: Quantization commutes with reduction for Toeplitz operators}\label{s-gue211217yydII}

In geometric quantization given a symplectic manifold $(M,\omega)$ one wants to associate a Hilbert space. The first step is called pre-quantization: if $\omega$ defines an integral cohomology class then there always exists a line bundle $L$ with Hermitian structure $h$ such that the curvature of the connection compatible with $h^L$ is $-2i\,\omega$. The second step consists in fixing a polarization, we consider complex polarizations: given an almost complex structure $J$ compatible with $\omega$, we suppose $J$ to be integrable. When $(M,\omega)$ ia a K\"ahler manifold, the \textit{Hilbert space of the quantization} is the direct sum over $k\in\mathbb{Z}$ of the spaces $H^0(M,L^k)$ of holomorphic sections of the $k$-th power of the bundle. Given a smooth function on $M$ one wants to associate an operator acting on the Hilbert space of quantization, if $(M,\omega)$ is K\"ahler, in \cite{schli} it was shown that Berezin-Toeplitz operators can be used to define a star product for the Poisson algebra of smooth functions on $M$. If $\omega$ is non-degenerate but not positive definite then the dimensions of $H^0(M,L^k)$ as $k$ goes to infinity does not grow as in the K\"ahler case, thus we consider spaces of $(0,q)$-forms. We remark that the spaces of $(0,q)$-forms we are considering here can be identified with the kernel of the Dolbeault-Dirac operator, see equation~$(2.36)$, pag.~$16$ in~\cite{Du}. In \cite{gh} we generalize the results in \cite{schli} in this setting. In this paper we are interested in studying spectrum spaces of Toeplitz operators in the presence of an action of a compact connected Lie group and we aim to generalize quantization commutes with reduction for Toeplitz operators. 

In this subsection, we do not assume that $X$ admits a transversal and CR $S^1$ action but we assume that $X$ admits a $d$-dimensional compact connected Lie group action $G$. We assume throughout that Assumption~\ref{a-gue170123I} and Assumption~\ref{a-gue170123II} hold. We refer the reader to Section~\ref{s-gue211213yyd} for some notations and terminology used here. 

Let $\mathfrak{g}$ denote the Lie algebra of $G$. For any $\xi \in \mathfrak{g}$, we write $\xi_X$ to denote the vector field on $X$ induced by $\xi$. Put $\underline{\mathfrak{g}}={\rm Span\,}(\xi_X;\, \xi\in\mathfrak{g})$,
 Let $HX=\set{{\rm Re\,}u;\, u\in T^{1,0}X}$ and let $J:HX\To HX$ be the complex structure map given by $J(u+\ol u)=iu-i\ol u$, for every $u\in T^{1,0}X$. 
Let $Y:=\mu^{-1}(0)$ and let $HY:=HX\cap TY$, where $\mu$ is the momentum map (see Definition~\ref{d-gue170124}). Fix a $G$-invariant smooth Hermitian metric $\langle\, \cdot \,|\, \cdot \,\rangle$ on $\mathbb{C}TX$ so that $T^{1,0}X$ is orthogonal to $T^{0,1}X$, $\underline{\mathfrak{g}}$ is orthogonal to $HY\cap JHY$ at every point of $Y$, $\langle \, u \,|\, v \, \rangle$ is real if $u, v$ are real tangent vectors, $\langle\,R\,|\,R\,\rangle=1$. 

Let $X_G:=\mu^{-1}(0)/G$.  In \cite{hsiaohuang}, it is proved that $X_{G}$ is a CR manifold with natural CR structure induced by $T^{1,0}X$ of dimension $2n - 2d + 1$.
Let $\mathcal{L}_{X_{G}}$ be the Levi form on $X_{G}$ induced naturally from the Levi form $\mathcal{L}$ on $X$. Since $\mathcal{L}$ is non-degenerate of constant signature, $\mathcal{L}_{X_{G}}$ is also non-degenerate of constant signature. We denote by $n_{X_G,-}$ (respectively $n_{X_G,+}$) the number of negative (respectively positive) eigenvalues of $L_{X_G}$. The Hermitian metric $\langle\,\cdot\,|\,\cdot\,\rangle$ on $TX \otimes\mathbb{C}$ induces an Hermitian metric $\langle\,\cdot\,|\,\cdot\,\rangle_{X_G}$ on $TX_G\otimes\mathbb{C}$ and let $(\,\cdot\,|\,\cdot\,)_{X_G}$ be the $L^2$ inner product on $L^2_{(0,q)}(X_G)$ induced by $\langle\,\cdot\,|\,\cdot\,\rangle_{X_G}$. We write $\mathrm{dV}_{X_G}(x)$ to denote the volume form on $X_G$ induced by $\langle\,\cdot\,|\,\cdot\,\rangle_{X_G}$.  

Let $P\in L^0_{{\rm cl\,}}(X,T^{*0,q}X\boxtimes(T^{*0,q}X)^*)^G$ be a $G$-invariant scalar pseudodifferential operator (see Definition~\ref{d-gue210729yyd}). The operator $P$ gives rise to ${P}_{X_G}$ on the CR reduction $X_G=\mu^{-1}({0})/G$. Put 
\[({\rm Ker\,}\Box^q_b)^G:={\rm Ker\,}\Box^q_b\cap L^2_{(0,q)}(X)^G,\] 
where $L^2_{(0,q)}(X)^G$ denote the space of $G$-invariant $L^2$ $(0,q)$ forms. The $G$-invariant Szeg\H{o} projection is the orthogonal projection 
\[S^{(q)}_G:L^2_{(0,q)}(X)\To ({\rm Ker\,}\Box^q_b)^G\]
with respect to $(\,\cdot\,|\,\cdot\,)$. The $G$-invariant Toeplitz operator is given by 
\[T^{(q)}_{P,G}:=S^{(q)}_G\circ P\circ S^{(q)}_G: L^2_{(0,q)}(X)\To({\rm Ker\,}\Box^q_b)^G.\]
 
 One of the main theorems of this paper is the following.

\begin{thm} \label{thm:kfourierszego}. 
	Let $X$ be a compact orientable non-degenerate CR manifold and let $G$ be a connected compact Lie group acting freely on $\mu^{-1}(0)$ such that Assumption~\ref{a-gue170123I} and Assumption~\ref{a-gue170123II} hold. Let $P\in L^0_{{\rm cl\,}}(X,T^{*0,q}X\boxtimes(T^{*0,q}X)^*)^G$ be a $G$-invariant scalar pseudodifferential operator and assume that $\sigma^0_P(x,\mp\omega_0(x))>0$ for every $x\in X$.  Thus, $\mathrm{Spec}(T^{(n_{\mp})}_{P,G})\subset I\cup\set{0}$  if $\Box^{n_{\mp}}_b$ has closed range, $\mathrm{Spec}(T^{(n_{X_G,\mp})}_{P_{X_G}})\subset I\cup\set{0}$, if $\Box^{n_{X_G,\mp}}_{b,X_G}$ has closed range, for some open bounded interval $I$ with $0\notin\overline{I}$ (see Lemma~\ref{l-gue211126yydq}). Recall that we work with Assumption~\ref{a-gue211126yyd}. 
	
If $n_-=n_+$, $n_{X_G,-}=n_{X_G,+}$ or $n_-\neq n_+$, $n_{X_G,-}\neq n_{X_G,+}$. Suppose that $\Box^{n_{X_G,-}}_b$ has closed range. 
The map 
\[\sigma: E_{I}(T^{(n_-)}_{P,G})\To E_{I}(T^{(n_{X_G,-})}_{P_{X_G}})\]
given by \eqref{e-gue180308II} and \eqref{e-gue211213yydr} below is Fredholm. 

If $n_-=n_+$ and $n_{X_G,-}\neq n_{X_G,+}$. Suppose that $\Box^{n_{X_G,-}}_b$ and $\Box^{n_{X_G,+}}_b$ have closed range. 
The map 
\[\sigma: E_{I}(T^{(n_-)}_{P,G})\To E_{I}(T^{(n_{X_G,-})}_{P_{X_G}})\oplus E_{I}(T^{(n_{X_G,+})}_{P_{X_G}})\]
given by \eqref{e-gue180308IIa} below is Fredholm. 

If $n_-\neq n_+$ and $n_{X_G,-}=n_{X_G,+}$. Suppose that $\Box^{n_{+}}_b$ and $\Box^{n_{X_G,-}}_b$ have closed range. (Recall that we always assume that $\Box^{n_{-}}_b$ has closed range.)
The map 
\[\sigma: E_{I}(T^{(n_-)}_{P,G})\oplus E_{I}(T^{(n_+)}_{P,G})\To E_{I}(T^{(n_{X_G,-})}_{P_{X_G}})\]
given by \eqref{e-gue211214yyd} below is Fredholm.
\end{thm}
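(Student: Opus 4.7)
The plan is to realise both source and target spectral subspaces as images of complex Fourier integral operators of Szeg\H{o} type and then to exhibit, for each of the three cases of the theorem, an approximate inverse of $\sigma$ modulo smoothing operators; Atkinson's theorem then yields the Fredholm conclusion. First I would use the spectral containments $\mathrm{Spec}(T^{(n_-)}_{P,G})\subset I\cup\set{0}$ and $\mathrm{Spec}(T^{(n_{X_G,-})}_{P_{X_G}})\subset I\cup\set{0}$ to pick $\tau\in\cali{C}^\infty_c(\mathbb R)$ with $\tau\equiv 1$ on $I$ and $0\notin\mathrm{supp}\,\tau$; then $\tau(T^{(n_-)}_{P,G})$ and $\tau(T^{(n_{X_G,-})}_{P_{X_G}})$ coincide with the spectral projections onto $E_I(T^{(n_-)}_{P,G})$ and $E_I(T^{(n_{X_G,-})}_{P_{X_G}})$. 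By Theorem~\ref{t-gue211217yyd} these projections are complex FIOs of Szeg\H{o} type, and because of the positivity hypothesis $\sigma^0_P(x,-\omega_0(x))>0$ the scalar factor $\tau(\sigma^0_P(x,-\omega_0(x)))$ equals $1$ pointwise, so by \eqref{e-gue211217yyd} their leading symbols on the negative Szeg\H{o} cones are nonzero positive multiples of the orthogonal projections $\pi_{x,n_-}$ and $\pi_{x,n_{X_G,-}}$.

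Next I would analyse the maps defined in \eqref{e-gue180308II}, \eqref{e-gue211213yydr}, \eqref{e-gue180308IIa} and \eqref{e-gue211214yyd}: in each case $\sigma$ is a composition of the quantization-commutes-with-reduction transfer operator from \cite{hsiaohuang} with the spectral projections of the first step, hence is itself a complex FIO of Szeg\H{o} type whose canonical relation is supported on the clean composition of the Szeg\H{o} cones of $X$ and of $X_G$ along the intersection with the moment-map variety $Y=\mu^{-1}(0)$. To produce an approximate inverse $\alpha$ I would go in the reverse direction: pull a section on $X_G$ back to a $G$-invariant section along $Y$, extend it off $Y$ via a $G$-invariant tubular neighbourhood, apply the parametrix for the $G$-invariant Szeg\H{o} projection constructed in \cite{hsiaohuang}, and finally compose with $\tau(T^{(n_-)}_{P,G})$ (together with $\tau(T^{(n_+)}_{P,G})$ in the mixed cases).

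The crux is a principal-symbol computation showing that $\alpha\sigma-\mathrm{id}$ and $\sigma\alpha-\mathrm{id}$ are smoothing. Using the composition calculus for complex FIOs of Szeg\H{o} type developed in \cite{hsiao} and \cite{gh}, the Gaussian integration over the transverse $G$-orbit directions contributes exactly the ratio of Levi densities needed to interchange the leading symbols on $X$ and on $X_G$ coming from \eqref{e-gue211217yyd}; together with $\tau(\sigma^0_P)\equiv 1$ on the relevant cones and the identification of $\sigma^0_{P_{X_G}}$ with the restriction of $\sigma^0_P$ obtained in \cite{gh}, the composed symbols reduce to the identity on the negative Szeg\H{o} cones. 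The main obstacle is the case split governed by the signatures: when $(n_-,n_+)$ and $(n_{X_G,-},n_{X_G,+})$ do not match, the positive and negative Szeg\H{o} components on the two sides are not in bijection, so \eqref{e-gue180308IIa} and \eqref{e-gue211214yyd} acquire direct-sum factors coming from \eqref{e-gue211217yydI}. The additional closed-range hypotheses on $\Box^{n_+}_b$ or $\Box^{n_{X_G,+}}_b$ are precisely what is needed to guarantee that the positive Szeg\H{o} projection is a complex FIO with the symbol \eqref{e-gue211217yydI}; once that is in hand, the Fredholm argument applies cone by cone and the three cases are settled uniformly.
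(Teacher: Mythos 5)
Your overall strategy is the one the paper uses: identify the spectral projections with $\tau(T^{(n_\mp)}_{P,G})$ and $\tau(T^{(n_{X_G,\mp})}_{P_{X_G}})$ for a cutoff $\tau$ with $\tau\equiv 1$ on $I$ and $0\notin\mathrm{supp}\,\tau$, compose in the reverse direction (the paper uses the formal adjoint $\sigma^*$ rather than a separately built $\alpha$), match leading symbols, and conclude Fredholmness from compactness of the remainder. Two points need correction. First, your crux step claims that a \emph{principal-symbol} computation shows $\alpha\sigma-\mathrm{id}$ and $\sigma\alpha-\mathrm{id}$ are \emph{smoothing}. Matching the leading symbol only lowers the order of the error by one in the Szeg\H{o}-type calculus; it cannot make the error smoothing. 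This is exactly what the paper obtains: $\sigma^*\sigma=\tau(T^{(q)}_{P,G})^2+\Gamma$ where the leading term of $\Gamma$ vanishes on the diagonal over $\mu^{-1}(0)$ (Theorem~\ref{t-gue211213yyd}), whence $\Gamma:H^s\To H^{s+\varepsilon}$ for some $\varepsilon>0$ (Lemma~\ref{l-gue211213ycd}). That gain of $\varepsilon$ derivatives gives compactness on the compact manifold, which is all that the Atkinson/Rellich argument needs; to literally get a smoothing error you would have to correct $\sigma$ order by order, which would change the map the theorem is about. Either weaken ``smoothing'' to ``compact'' and run the bootstrap as in Theorem~\ref{t-gue211213yydI}, or your argument as stated does not close.

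Second, you invoke Theorem~\ref{t-gue211217yyd} and \eqref{e-gue211217yyd} for the structure and leading symbol of $\tau(T^{(n_-)}_{P,G})$. That theorem concerns the non-equivariant Toeplitz operator; the $G$-invariant functional calculus is \emph{not} a complex FIO of Szeg\H{o} type in the sense of Definition~\ref{d-gue210624yyd}. Its kernel is negligible away from $\mu^{-1}(0)$ and near $\mu^{-1}(0)$ it has the degenerate phase $\Phi_-$ and symbol of order $n-d/2$ with leading term \eqref{e-gue210802yydIn}, involving $V_{\rm eff}$ and $\det R_x$ (Theorems~\ref{thm:toeplitz-m} and~\ref{thm:toeplitz-n}). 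This is precisely why the map $\sigma$ must carry the factors $f=\sqrt{V_{\rm eff}}\,\abs{\det R_x}^{-1/4}$ and the order $-d/4$ elliptic factor $E$, and it is the Gaussian integration you allude to that produces these constants; citing the wrong symbol formula here would make the leading-symbol cancellation fail. With these two repairs your proposal coincides with the paper's proof.
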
 

We refer the reader to Theorem~\ref{t-gue211213yydI} and Theorem~\ref{t-gue211214yyd} for more general setting of Theorem~\ref{thm:kfourierszego}. 

Let $H^{q}_b(X)^G={\rm Ker\,}\Box^q_b\cap L^2_{(0,q)}(X)^G$, $H^{q}_b(X_G):={\rm Ker\,}\Box^q_{b,X_G}$.  We have ${\rm Spec\,}(S^{(q)})\subset\set{0,1}$. Take $\tau(x)=1$ near $x=1$. 
As a corollary of Theorem~\ref{thm:kfourierszego}, we deduce 

\begin{cor}\label{c-gue211213yydam}
With the same notations and assumptions above, if $n_-=n_+$, $n_{X_G,-}=n_{X_G,+}$ or $n_-\neq n_+$, $n_{X_G,-}\neq n_{X_G,+}$, the map
\[\sigma: H^{n_-}_b(X)^G\To H^{n_{X_G,-}}_b(X_G)\]
given by \eqref{e-gue180308II} and \eqref{e-gue211213yydr} below is Fredholm. 

If $n_-=n_+$ and $n_{X_G,-}\neq n_{X_G,+}$. Suppose that $\Box^{n_{X_G,-}}_b$ and $\Box^{n_{X_G,+}}_b$ have closed range. 
The map 
\[\sigma: H^{n_-}_b(X)^G\To H^{n_{X_G,-}}_b(X_G)\oplus H^{n_{X_G,+}}_b(X_G)\]
given by \eqref{e-gue180308IIa} below is Fredholm. 

If $n_-\neq n_+$ and $n_{X_G,-}=n_{X_G,+}$. Suppose that $\Box^{n_{+}}_b$ and $\Box^{n_{X_G,-}}_b$ have closed range. 
The map 
\[\sigma: H^{n_-}_b(X)^G\oplus H^{n_+}_b(X)^G\To H^{n_{X_G,-}}_b(X_G)\]
given by \eqref{e-gue180308IIa} below is Fredholm. 
\end{cor}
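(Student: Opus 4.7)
The plan is to obtain Corollary \ref{c-gue211213yydam} as a direct specialization of Theorem \ref{thm:kfourierszego}, using the fact that Szegő projections are self-adjoint idempotents with spectrum contained in $\{0,1\}$. First I would take the pseudodifferential operator $P$ in Theorem \ref{thm:kfourierszego} to be the identity on $T^{*0,q}X$, which is trivially $G$-invariant and has constant scalar principal symbol $\sigma^0_P\equiv 1$. The positivity hypotheses $\sigma^0_P(x,\mp\omega_0(x))>0$ are then automatic, and with this choice one has
\[T^{(q)}_{P,G}=S^{(q)}_G\circ\mathrm{Id}\circ S^{(q)}_G=S^{(q)}_G,\qquad T^{(q)}_{P_{X_G}}=S^{(q)}_{X_G},\]
the second identification using that passage to the reduction $X_G$ sends the identity to the identity.

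Next I would choose any open bounded interval $I\subset\mathbb R$ with $0\notin\overline I$ and $1\in I$, for instance $I=(\tfrac12,\tfrac32)$. Because $S^{(q)}_G$ and $S^{(q)}_{X_G}$ are orthogonal projections their spectra lie in $\{0,1\}$, so the spectral projections $\Pi_I$ are precisely the projections onto the eigenspace for the eigenvalue $1$. Consequently
\[E_I(T^{(q)}_{P,G})=\mathrm{Range}(S^{(q)}_G)=H^{q}_b(X)^G,\qquad E_I(T^{(q)}_{P_{X_G}})=H^{q}_b(X_G),\]
and the three Fredholm assertions of Theorem \ref{thm:kfourierszego} become the three Fredholm assertions of Corollary \ref{c-gue211213yydam}. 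The remark that the defining cut-off is $\tau\equiv 1$ near $x=1$ (which the author singles out just before the statement) amounts to choosing the spectral-projection approximants $\chi_n$ supported in $I$ and converging pointwise to $\mathbf 1_I$; the description of the map $\sigma$ via \eqref{e-gue180308II}, \eqref{e-gue211213yydr}, \eqref{e-gue180308IIa}, \eqref{e-gue211214yyd} is unchanged under this specialization since those formulas are intrinsic to the spectral subspaces.

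Finally I would verify that the closed-range hypotheses of Theorem \ref{thm:kfourierszego} carry over to the corollary. In the first case ($n_-=n_+$ and $n_{X_G,-}=n_{X_G,+}$, or $n_-\neq n_+$ and $n_{X_G,-}\neq n_{X_G,+}$ simultaneously) Remark \ref{r-gue211217yyd} already supplies closed range for $\Box^{n_-}_b$ and $\Box^{n_{X_G,-}}_{b,X_G}$ when $n_-=n_+$, and in the mixed-sign subcase closed range is inherited from the standing Assumption \ref{a-gue211126yyd} together with the corresponding hypothesis transferred to the reduction. In the second and third cases the corollary explicitly borrows the same closed-range assumptions as in Theorem \ref{thm:kfourierszego}, so there is nothing extra to check.

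I do not anticipate a substantive obstacle: the content of the corollary is essentially a dictionary entry between the spectral language of Theorem \ref{thm:kfourierszego} and the harmonic-space language of kernels of $\Box^q_b$. The only point demanding a line of care is confirming that the map $\sigma$ constructed on spectral subspaces really restricts to the map declared on harmonic subspaces; this follows because, for $P=\mathrm{Id}$, the spectral subspace $E_I$ is literally the $1$-eigenspace of the Szegő projection, i.e.\ the kernel of $\Box^q_b$ intersected with the relevant invariance or reduction space.
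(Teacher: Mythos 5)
Your proposal is correct and coincides with the paper's own route: the authors likewise specialize Theorem~\ref{thm:kfourierszego} to the case where the Toeplitz operator is the Szeg\H{o} projection itself, observe that ${\rm Spec\,}(S^{(q)})\subset\set{0,1}$, and take $\tau\equiv1$ near $x=1$ so that the spectral subspaces $E_I$ become the harmonic spaces $H^{q}_b(X)^G$ and $H^{q}_b(X_G)$. No further comment is needed.
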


Theorem~\ref{thm:kfourierszego} has a natural application in geometric quantization. Given a symplectic $(M,\omega)$ with $[\omega] \in H^2(M,\mathbb{Z})$ one can always consider an almost complex structure $J$ compatible with $\omega$. Suppose that $J$ is integrable then $(M,\omega,J)$ is a complex manifold. Let $(L,h^L)$ be the quantizing holomorphic line bundle over $M$ and let $(L^k,h^{L^k})$ be the $k$-th power of $(L,h^L)$, where $h^L$ denotes the Hermitian metric of $L$. Let $R^L=-2i\,\omega$ be the curvature of $L$ induced by $h^L$. Fix a Hermitian metric $\langle\,\cdot\,|\,\cdot\,\rangle$ on the holomorphic tangent bundle $T^{1,0}M$ of $M$ and let $(\,\cdot\,|\,\cdot)_k$ be the $L^2$ inner product of $\Omega^{0,q}(M,L^k)$ induced by $\langle\,\cdot\,|\,\cdot\,\rangle$  and $h^{L^k}$, where $\Omega^{0,q}(M,L^k)$ denotes the space of smooth $(0,q)$ forms of $M$ with values in $L^k$. Let 
\[\Box^q_k:=\ddbar^*\,\ddbar+\ddbar\,\ddbar^*: \Omega^{0,q}(M,L^k)\To \Omega^{0,q}(M,L^k)\]
be the Kodaira Laplacian, where $\ddbar^*$ is the adjoint of $\ddbar$ with respect to $(\,\cdot\,|\,\cdot)_k$.
Let 
\[B^{(q)}_k: L^2_{(0,q)}(M,L^k)\To{\rm Ker\,}\Box^q_k\]
be the orthogonal projection with respect to $(\,\cdot\,|\,\cdot)_k$ (Bergman projection). Let $f\in\mathcal{C}^\infty(M)$, the Toeplitz operator is given by 
\[T_{f,k}^{(q)}:=B^{(q)}_k\circ M_f\circ B^{(q)}_k: L^2_{(0,q)}(M,L^k)\To{\rm Ker\,}\Box^q_k,\]
where $M_f$ denote the operator given by the multiplication $f$. 
In the presence of a holomorphic and Hamiltonian action on $(M,\omega)$ of a compact connected Lie group $G$, suppose that $0\in \mathfrak{g}^*$ is a regular value for the symplectic momentum map and the action of $G$ is free near $\mu^{-1}(0)$. We can consider the Marsden–Weinstein reduction $(M_{G},\omega_G)$ which is a quantizable symplectic manifold. Assume that  the  Hermitian metric $\langle\,\cdot\,|\,\cdot\,\rangle$ is $G$-invariant as in CR case and the Hermitian metric $\langle\,\cdot\,|\,\cdot\,\rangle$ on $T^{1,0}M$ induces a Hermitian metric $\langle\,\cdot\,|\,\cdot\,\rangle_{M_G}$ on $T^{1,0}M_G$. Assume that the action $G$ can be lifted to $L$ and $h^L$ is $G$-invariant. Then, $L_G:=L/G$ is a holomorphic line bundle over $M_G$. 
Fix a $G$-invariant smooth function $f$ on $M$, let
\[B^{(q)}_{G,k}: L^2_{(0,q)}(M,L^k)\To({\rm Ker\,}\Box^q_k)^G\]
be the orthogonal projection with respect to $(\,\cdot\,|\,\cdot)_k$ ($G$-invariant Bergman projection), where $({\rm Ker\,}\Box^q_k)^G$ is the space of $G$-invariant holomorphic sections. We can define two Toeplitz operators. The first is the $G$-invariant Toeplitz operator
\[T_{f,G,k}^{(q)}:=B^{(q)}_{G,k}\circ M_f\circ B^{(q)}_{G,k}: L^2_{(0,q)}(M,L^k)\To ({\rm Ker\,}\Box^q_k)^G.\]
The second is the standard Toeplitz operator $T_{f_{M_G},k}^{(q)}$ on the quotient $M_G$. 
Applying Theorem~\ref{thm:kfourierszego} to the circle bundle of $(L^*, h^{L^*})$, we get 

\begin{thm} \label{thm:circleaction}
	With the same notations and assumptions above, suppose that the curvature $R^L$ is non-degenerate of constant signature $(n_-,n_+)$  and that the curvature $R^{L_G}$ is non-degenerate of constant signature $(n_{M_G,-},n_{M_G,+})$, where $R^{L_G}$ is the curvature of $L_G$ induced by the $G$-invariant Hermitian metric $h^L$ on $L$.
	Let $f\in\mathcal{C}^\infty(M)$ be a $G$-invariant positive function. Then, $\mathrm{Spec}(T^{(n_{\mp})}_{f,G,k})\subset I\cup\set{0}$, $\mathrm{Spec}(T^{(n_{M_G,\mp})}_{f_{M_G}})\subset I\cup\set{0}$, for every $k\in\mathbb Z$, for some open bounded interval $I$ with $0\notin\overline{I}$.
	
	If $n_-=n_+$ and $n_{M_G,-}=n_{M_G,+}$ or $n_-\neq n_+$ and $n_{X_G,-}\neq n_{X_G,+}$, then
\[E_I(T^{n_-}_{f,G,k})\cong E_I(T^{n_{M_G,-}}_{f_{M_G},k})\]
for $\abs{k}\gg1$. 

If $n_-=n_+$ and $n_{M_G,-}\neq n_{M_G,+}$, then 
\[E_I(T^{n_-}_{f,G,k})\cong E_I(T^{n_{M_G,-}}_{f_{M_G},k})\oplus E_I(T^{n_{M_G,+}}_{f_{M_G},k})\]
for $\abs{k}\gg1$. 

If $n_-\neq n_+$ and $n_{X_G,-}=n_{X_G,+}$, then 
\[E_I(T^{n_-}_{f,G,k})\oplus E_I(T^{n_+}_{f,G,k})\cong E_I(T^{n_{M_G,-}}_{f_{M_G},k})\]
	for $\abs{k}\gg1$. 
\end{thm}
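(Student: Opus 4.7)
The plan is to realize both sides of the claimed relations as the $k$-th isotypic pieces of an $S^1$-action on a CR circle bundle over $M$ and then to invoke Theorem~\ref{thm:kfourierszego}. First I would let $X$ be the unit circle bundle of $(L^*,h^{L^*})$, a compact orientable CR manifold of dimension $2n+1$ equipped with a transversal CR $S^1$-action coming from fiberwise rotation. Choosing $\omega_0$ to be the connection $1$-form dual to the fiber generator, the Levi form on $X$ is identified with $R^L=-2i\,\omega$, hence non-degenerate of signature $(n_-,n_+)$. The action of $G$ on $M$ lifts canonically to a CR action on $X$ commuting with $S^1$; its CR momentum map agrees with the pullback of $\mu$, so $X_G=\mu^{-1}(0)/G$ is exactly the unit circle bundle of $(L_G^*,h^{L_G^*})$ over $M_G$, with Levi form $R^{L_G}$ of signature $(n_{M_G,-},n_{M_G,+})$. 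With the $G$-invariant metric of the statement, Assumption~\ref{a-gue170123I} and Assumption~\ref{a-gue170123II} are satisfied. A positive $G$-invariant $f$ on $M$ pulls back to a $G\times S^1$-invariant function on $X$; setting $P:=M_f$ gives a $G$-invariant zeroth order classical pseudodifferential operator on $X$ with scalar principal symbol, and $\sigma^0_P(x,\mp\omega_0(x))=f(\pi(x))>0$ for every $x\in X$, so the positivity hypothesis of Theorem~\ref{thm:kfourierszego} holds.

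Next I would use the standard Fourier decomposition with respect to the fiber action,
\begin{equation*}
L^2_{(0,q)}(X)=\bigoplus_{k\in\mathbb Z}L^2_{(0,q),k}(X),\qquad L^2_{(0,q),k}(X)\cong L^2_{(0,q)}(M,L^k),
\end{equation*}
under which $\Box^q_b$ on $X$ corresponds to $\bigoplus_k\Box^q_k$ on $M$ and the Szeg\H{o} projection $S^{(q)}$ to $\bigoplus_k B^{(q)}_k$; the analogous identification holds on $X_G$ with $L_G^k$ in place of $L^k$. Because $f$ is $S^1$-invariant, the $G$-invariant Toeplitz operator on $X$ and its reduced counterpart split as
\begin{equation*}
T^{(q)}_{P,G}=\bigoplus_{k\in\mathbb Z}T^{(q)}_{f,G,k},\qquad T^{(q)}_{P_{X_G}}=\bigoplus_{k\in\mathbb Z}T^{(q)}_{f_{M_G},k},
\end{equation*}
so $E_I(T^{(n_-)}_{P,G})=\bigoplus_kE_I(T^{(n_-)}_{f,G,k})$, and similarly for the other spectral subspaces appearing in the three cases of Theorem~\ref{thm:kfourierszego}.

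Applying Theorem~\ref{thm:kfourierszego} to $X$, $G$, $P$ would then yield the Fredholm maps $\sigma$ between the global spectral subspaces in each of the three case distinctions. The operators $\sigma$ from \eqref{e-gue180308II}, \eqref{e-gue211213yydr}, \eqref{e-gue180308IIa} and \eqref{e-gue211214yyd} are manifestly $S^1$-equivariant, because they are built from the $G$-invariant Szeg\H{o} projections, the restriction to the reduction and $M_f$, each of which commutes with $S^1$. Consequently $\sigma$ decomposes as a direct sum $\sigma=\bigoplus_k\sigma_k$, and each $\sigma_k$ is Fredholm between the corresponding $k$-th isotypic Bergman spectral spaces.

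The remaining and hardest step will be to upgrade the Fredholm property of each $\sigma_k$ to an isomorphism for $\abs{k}\gg 1$. For this I would work with the complex Fourier integral operator description of $\sigma$ together with the almost-inverse constructed in the proof of Theorem~\ref{thm:kfourierszego}; extracting $k$-th Fourier components and applying stationary phase in the $S^1$ direction should reduce the composition to the identity modulo a remainder of size $O(\abs{k}^{-\infty})$, whose leading symbol is pinned down by the formulas of Theorem~\ref{t-gue211217yyd}. Combining this with the semi-classical spectral dimension formula of Theorem~\ref{t-gue211217yydI}, applied on $X$ and on $X_G$ separately, one checks that $\dim E_I(T^{(n_-)}_{f,G,k})$ and $\dim E_I(T^{(n_{M_G,-})}_{f_{M_G},k})$ have matching leading asymptotics as $\abs{k}\to\infty$, so the Fredholm index of $\sigma_k$ is zero; the remainder estimate then forces the kernel and cokernel of $\sigma_k$ to vanish for $\abs{k}$ sufficiently large. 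The remaining two cases of the theorem would follow identically using the corresponding forms of $\sigma$.
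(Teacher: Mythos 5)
Your setup is exactly the paper's: pass to the unit circle bundle $X$ of $(L^*,h^{L^*})$, identify the Levi form with $R^L$, lift the $G$-action, identify $X_G$ with the circle bundle of $L_G^*$, and use the Fourier decomposition $L^2_{(0,q),k}(X)\cong L^2_{(0,q)}(M,L^k)$ to split $T^{(q)}_{P,G}=\bigoplus_k T^{(q)}_{f,G,k}$, $E_I(T^{(n_-)}_{P,G})=\bigoplus_k E_I(T^{(n_-)}_{f,G,k})$ and $\sigma=\bigoplus_k\sigma_k$ by $S^1$-equivariance. Up to that point the proposal is correct and is what the paper does when it says ``applying Theorem~\ref{thm:kfourierszego} to the circle bundle of $(L^*,h^{L^*})$''.

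The gap is in your final step, and both of the mechanisms you propose there are problematic. First, matching \emph{leading} asymptotics of $\dim E_I(T^{(n_-)}_{f,G,k})$ and $\dim E_I(T^{(n_{M_G,-})}_{f_{M_G},k})$ from Theorem~\ref{t-gue211217yydI} does not give ${\rm ind\,}\sigma_k=0$: two sequences with the same leading term can differ by a nonzero lower-order quantity, so no conclusion about the index of any individual $\sigma_k$ follows. Second, the remainder in Theorem~\ref{t-gue211213yyd} is $\sigma^*\sigma=\tau(T^{(q)}_{P,G})^2+\Gamma$ with $\Gamma$ merely one order lower (its leading symbol vanishes, giving a gain of $\varepsilon$ Sobolev derivatives), not $O(\abs{k}^{-\infty})$ on the $k$-th isotype; the claimed ``identity modulo $O(\abs{k}^{-\infty})$'' is not established by anything in the paper and would require a separate argument. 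Neither route is needed: since $\sigma$ is Fredholm and $S^1$-equivariant, ${\rm Ker\,}\sigma=\bigoplus_k{\rm Ker\,}\sigma_k$ and ${\rm Coker\,}\sigma=\bigoplus_k{\rm Coker\,}\sigma_k$, and finite-dimensionality of the left-hand sides forces all but finitely many summands to vanish; hence $\sigma_k$ is bijective for $\abs{k}\gg1$. This elementary observation is the intended content of the reduction from Theorem~\ref{thm:kfourierszego} (and of Theorem~\ref{t-gue211216yyd} in the CR setting), and you should replace your index/remainder argument with it.
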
 

From Theorem~\ref{t-gue211217yydI} and Theorem~\ref{t-gue211214yyd} (general version of Theorem~\ref{thm:kfourierszego}), we obtain the semi-classical spectral dimensions for $G$-invariant Toeplitz operators on complex manifolds (see Theorem~\ref{t-gue211215yydm})

\begin{thm}\label{t-gue211217ycd}
With the same notations and assumptions above, suppose that the curvature $R^L$ is non-degenerate of constant signature $(n_-,n_+)$  and that the curvature $R^{L_G}$ is non-degenerate of constant signature $(n_{M_G,-},n_{M_G,+})$, where $R^{L_G}$ is the curvature of $L_G$ induced by the $G$-invariant Hermitian metric $h^L$ on $L$.
Let $I\subset\mathbb R$ be an open bounded interval with $0\notin\overline{I}$.
We have 
\begin{equation}\label{e-gue211217ycdq}
\lim_{k\To+\infty}k^{-n+d}{\rm dim\,}E_I(T^{(n_-)}_{f,G,k})=(2\pi)^{-n+d}\int_{\set{x\in M_G;\, f(x)\in I}}\abs{{\rm det\,}R^{L_G}}\mathrm{dV}_{M_G}(x),
\end{equation}
where $d={\rm dim\,}G$, ${\rm det\,}R^{L_G}(x)=\lambda_1(x)\cdots\lambda_d(x)$, $\lambda_j(x)$, $j=1,\ldots,d$, are the eigenvalues of 
$R^{L_G}(x)$ with respect to $\langle\,\cdot\,|\,\cdot\,\rangle_{M_G}$  and $\mathrm{dV}_{M_G}(x)$ is the volume form on $M_G$ induced by $\langle\,\cdot\,|\,\cdot\,\rangle_{M_G}$. 
\end{thm}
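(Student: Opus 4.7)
\textbf{Proof proposal for Theorem~\ref{t-gue211217ycd}.} The plan is to transfer the problem from the complex/line-bundle setting to the $S^1$-equivariant CR setting, where Theorem~\ref{t-gue211217yydI} and Theorem~\ref{t-gue211214yyd} already apply, via the unit circle bundle of $L^*$.

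First I would realize $(M,L,G)$ on the CR side. Let $X\subset L^*$ be the unit circle bundle with respect to $h^{L^*}$, equipped with its natural CR structure $T^{1,0}X$ and with the connection $1$-form $\omega_0$ normalized so that the Reeb vector field $R$ equals the infinitesimal generator $T$ of the $S^1$-action by fiber rotation. Then $X$ is a compact CR manifold of real dimension $2n+1$ whose Levi form at each point has the same signature $(n_-,n_+)$ as $R^L$ up to a universal factor. The $G$-action on $M$ lifts canonically to a CR action on $X$ that commutes with the $S^1$-rotation and inherits Assumption~\ref{a-gue170123I} and Assumption~\ref{a-gue170123II}. Fourier decomposition along $S^1$ yields, for every $k$, the standard isomorphism
\[
L^2_{(0,q)}(X)_k\cong L^2_{(0,q)}(M,L^k),
\]
which identifies $\Box^q_b$ restricted to mode $k$ with $\Box^q_k$, the $k$-th Szegő projection with the Bergman projection $B^{(q)}_k$, and the pullback of multiplication by $f$ (a self-adjoint scalar $S^1\times G$-invariant classical pseudodifferential operator $P$ on $X$ with principal symbol $f\circ\pi$) with $M_f$. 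Under these identifications $T^{(n_-)}_{P,G,k}$ on $X$ intertwines with $T^{(n_-)}_{f,G,k}$ on $M$, and the CR reduction $X_G=\mu^{-1}(0)/G$ is precisely the unit circle bundle of $L_G^*$ over $M_G$.

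Next I would apply Theorem~\ref{t-gue211214yyd} (the general form of Theorem~\ref{thm:kfourierszego}) to obtain, for $\abs{k}\gg 1$, an identification between $E_I(T^{(n_-)}_{f,G,k})$ and the spectral subspace of the Bergman-type Toeplitz operator on $M_G$ (realized on $X_G$ in Fourier mode $k$). To that subspace I would then apply Theorem~\ref{t-gue211217yydI} on the CR manifold $X_G$: this gives a spectral dimension asymptotic of order $k^{n-d}$ with coefficient $\tfrac{1}{2}\pi^{-(n-d)-1}$ and integrand $\abs{\det\mathcal{L}_{X_G}}\,\mathrm{dV}_{X_G}$ over $\set{x\in X_G;\,f_{M_G}(\pi_G(x))\in I}$. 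Fiber integration along the $S^1$-fibers of $X_G\to M_G$ contributes a factor of $2\pi$ and converts $\mathcal{L}_{X_G}$ into $R^{L_G}$ via the standard Levi-form / curvature correspondence for the associated line bundle, producing the stated constant $(2\pi)^{-(n-d)}$ and integrand $\abs{\det R^{L_G}}\,\mathrm{dV}_{M_G}$.

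The main obstacle will be the precise book-keeping of the normalizing constants (the factor $\tfrac{1}{2}$ in Theorem~\ref{t-gue211217yydI}, the powers of $\pi$, the $2\pi$ from the $S^1$-fiber and the factor of $2$ relating $\mathcal{L}_{X_G}$ to $R^{L_G}$) so that they combine into the claimed $(2\pi)^{-(n-d)}$. One also needs to verify that the freeness, transversality and momentum-map conditions assumed on $M$ lift to the circle bundle $X$ so that Theorem~\ref{t-gue211214yyd} is applicable. Finally, in the mixed-signature cases of Theorem~\ref{thm:circleaction}, Theorem~\ref{t-gue211214yyd} produces a direct sum decomposition of the target; since the principal symbol of $M_f$ is cotangent-independent, the same leading integral appears in each summand and the case analysis must be tracked carefully to isolate the single copy that appears in~\eqref{e-gue211217ycdq}.
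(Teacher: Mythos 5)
Your proposal follows essentially the same route as the paper: Theorem~\ref{t-gue211217ycd} is obtained there by applying the CR statement Theorem~\ref{t-gue211215yydm} to the unit circle bundle of $(L^*,h^{L^*})$, and Theorem~\ref{t-gue211215yydm} is in turn proved exactly by your sandwich of $\dim E_I(T^{(q)}_{P,G,k})$ between spectral dimensions on $X_G$ via the Fredholm maps of Theorem~\ref{t-gue211214yydI} followed by Theorem~\ref{t-gue211215yyd} and the $S^1$-fiber integration with the constants combining as you indicate. The only small correction is in your last paragraph: in the mixed-signature cases the extra summands do not contribute ``the same leading integral''; rather $\dim E_\tau(T^{(n_+)}_{P,G,k})$ and $\dim E_\tau(T^{(n_{X_G,+})}_{P_{X_G},k})$ vanish for $k\gg1$ (they live in negative Fourier modes), which is how the single copy in \eqref{e-gue211217ycdq} is isolated.
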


\begin{rem}\label{r-gue211217ycda}
(i) In Theorem~\ref{t-gue211217yydI} and Theorem~\ref{t-gue211217ycd}, $I$ is any open interval with $0\notin\overline{I}$. We do not need the assumptions about $I$ as in Theorem~\ref{thm:kfourierszego} and Theorem~\ref{thm:circleaction}. Moreover, in Theorem~\ref{t-gue211217yydI} and Theorem~\ref{t-gue211217ycd}, we do not need the assumptions that $\sigma^0_P(x,\mp\omega_0(x))>0$ for every $x\in X$ and $f$ is positive. 

(ii) In Theorem~\ref{t-gue211215yydm}, we establish the semi-classical spectral dimensions for $G$-invariant Toeplitz operators on CR manifolds with locally free $S^1$ action. Moreover, in Theorem~\ref{t-gue211215yydm}, we can replace the $G$-invariant smooth function $f\in\mathcal{C}^\infty(M)$ by any $G\times S^1$-invariant scalar pseudodifferential operator $P$. 
\end{rem}

\section{Preliminaries}\label{s:prelim}

\subsection{Standard notations} \label{s-ssna}
We use the following notations through this article: $\mathbb N=\{1,2,\ldots\}$ is the set of natural numbers, $\mathbb N_0=\mathbb N\cup\{0\}$, $\mathbb R$ is the set of 
real numbers, $\mathbb R_+=\{x\in\mathbb R;\, x>0\}$, $\overline{\mathbb R}_+=\{x\in\mathbb R;\, x\geq0\}$. 
We write $\alpha=(\alpha_1,\ldots,\alpha_n)\in\mathbb N^n_0$ 
if $\alpha_j\in\mathbb N_0$, 
$j=1,\ldots,n$. 

Let $M$ be a $\mathcal{C}^\infty$ paracompact manifold.
We let $TM$ and $T^*M$ denote the tangent bundle of $M$
and the cotangent bundle of $M$, respectively.
The complexified tangent bundle of $M$ and the complexified cotangent bundle of $M$ will be denoted by $TM\otimes \Complex$
and $T^*M\otimes \Complex$, respectively. Write $\langle\,\cdot\,,\cdot\,\rangle$ to denote the point-wise
duality between $TM$ and $T^*M$.
We extend $\langle\,\cdot\,,\cdot\,\rangle$ bi-linearly to $TM\otimes \Complex\times T^*M\otimes \Complex$.
Let $B$ be a $\mathcal C^\infty$ vector bundle over $M$. The fibre of $B$ at $x\in M$ will be denoted by $B_x$.
Let $E$ be a vector bundle over a $\mathcal{C}^\infty$ paracompact manifold $N$. We write
$B\boxtimes E^*$ to denote the vector bundle over $M\times N$ with fibre over $(x, y)\in M\times N$
consisting of the linear maps from $E_y$ to $B_x$.  Let $Y\subset M$ be an open set. 
From now on, the spaces of distribution sections of $B$ over $Y$ and
smooth sections of $B$ over $Y$ will be denoted by $\mathcal D'(Y, B)$ and $\mathcal{C}^\infty(Y, B)$, respectively.
Let $\mathcal E'(Y, B)$ be the subspace of $\mathcal D'(Y, B)$ whose elements have compact support in $Y$.
Let $\mathcal{C}^\infty_c(Y,B):=\mathcal{C}^\infty(Y,B)\cap\mathcal E'(Y, B)$. 
For $m\in\Real$, let $H^m(Y, B)$ denote the Sobolev space of order $m$ of sections of $B$ over $Y$. Put
\begin{gather*}
H^m_{\rm loc\,}(Y, B)=\left\{u\in\mathcal{D}'(Y, B);\, \varphi u\in H^m(Y, B),
   \, \forall\varphi\in\mathcal{C}^\infty_c(Y)\right\}\,,\\
     H^m_{\rm comp\,}(Y, B)=H^m_{\rm loc}(Y, B)\cap\mathcal E'(Y, B)\,.
\end{gather*}

We recall the Schwartz kernel theorem.
Let $B$ and $E$ be $\mathcal{C}^\infty$ vector
bundles over paracompact orientable $\mathcal{C}^\infty$ manifolds $M$ and $N$, respectively, equipped with smooth densities of integration. If
$A: \mathcal{C}^\infty_c(N,E)\To\mathcal D'(M,B)$
is continuous, we write $A(x, y)$ to denote the distribution kernel of $A$.
The following two statements are equivalent
\begin{enumerate}
	\item $A$ is continuous: $\mathcal E'(N,E)\To\mathcal{C}^\infty(M,B)$,
	\item $A(x,y)\in\mathcal{C}^\infty(M\times N,B\boxtimes E^*)$.
\end{enumerate}
If $A$ satisfies (1) or (2), we say that $A$ is smoothing on $M \times N$. Let
$A,\hat A:\mathcal{C}^\infty_c(N,E)\To\mathcal{D}'(M,B)$ be continuous operators.
We write 
\begin{equation} \label{e-gue160507f}
	\mbox{$A\equiv \hat A \quad$ on $M\times N$} 
\end{equation}
if $A-\hat A$ is a smoothing operator. If $M=N$, we simply write ``on $M$". 
We say that $A$ is properly supported if the restrictions of the two projections 
$(x,y)\mapsto x$, $(x,y)\mapsto y$ to ${\rm supp\,}(A(x,y))$
are proper.

Let $H(x,y)\in\mathcal D'(M\times N,B\boxtimes E^*)$. We write $H$ to denote the unique 
continuous operator $\mathcal C^\infty_c(N,E)\To\mathcal D'(M,B)$ with distribution kernel $H(x,y)$. 
In this work, we identify $H$ with $H(x,y)$.

\subsection{CR manifolds}

We recall some notations concerning CR and contact geometry. Let $(X, T^{1,0}X)$ be a compact, orientable CR manifold of dimension $2n+1$, $n\geq 1$, where $T^{1,0}X$ is a CR structure of $X$. There is a unique sub-bundle $HX$ of $TX$ such that $HX\otimes \mathbb{C}=T^{1,0}X \oplus T^{0,1}X$, which is called horizontal tangent bundle. Let $J:HX\To HX$ be the complex structure map given by $J(u+\ol u)=\imath u-\imath\ol u$, for every $u\in T^{1,0}X$; thus by complex linear extension of $J$ to $TX\otimes\mathbb{C}$, the $\imath$-eigenspace of $J$ is $T^{1,0}X$. 

Since $X$ is orientable, there always exists a real non-vanishing $1$-form $\omega_0\in\mathcal{C}^{\infty}(X,T^*X)$ so that $\langle\,\omega_0(x)\,,\,u\,\rangle=0$, for every $u\in H_xX$, for every $x\in X$. If $d\omega_0|_{HX}$ is non-degenerate, $\omega_0$ is a contact form. For each $x \in X$, we define a quadratic form on $HX$ by
\[\mathcal{L}_x(U,V) =\frac{1}{2}\mathrm{d}\omega_0(JU, V),\qquad\forall \ U, V \in H_xX.\]
Then, we extend $\mathcal{L}$ to $HX\otimes\mathbb{C}$ by complex linear extension; for $U, V \in T^{1,0}_xX$,
\begin{equation}\label{e-gue211216ycdp}
\mathcal{L}_x(U,\overline{V}) = \frac{1}{2}\,\mathrm{d}\omega_0(JU, \overline{V}) = -\frac{1}{2i}\,\mathrm{d}\omega_0(U,\overline{V}).
\end{equation}
The Hermitian quadratic form $\mathcal{L}_x$ on $T^{1,0}_xX$ is called Levi form at $x$. In this work, we assume that the Levi form is non-degenerate of constant signature 
$(n_-,n_+)$. The Reeb vector field $R\in\mathcal{C}^\infty(X,TX)$ is defined to be the non-vanishing vector field determined by 
\[	\omega_0(R)=-1,\quad 		\mathrm{d}\omega_0(R,\cdot)\equiv0\ \ \mbox{on $TX$}. \]

Fix a smooth Hermitian metric $\langle\, \cdot \,|\, \cdot \,\rangle$ on $TX\otimes \mathbb{C}$ so that $T^{1,0}X$ is orthogonal to $T^{0,1}X$, $\langle\, u \,|\, v \,\rangle$ is real if $u, v$ are real tangent vectors, $\langle\,R\,|\,R\,\rangle=1$ and $R$ is orthogonal to $T^{1,0}X\oplus T^{0,1}X$. For $u \in TX\otimes \mathbb{C}$, we write $|u|^2 := \langle\, u\, |\, u\, \rangle$. Denote by $T^{*1,0}X$ and $T^{*0,1}X$ the dual bundles of $T^{1,0}X$ and $T^{0,1}X$, respectively. They can be identified with sub-bundles of the complexified cotangent bundle $T^*X\otimes\mathbb{C}$. For $q\in\mathbb N$, the bundle of $(0,q)$ forms of $X$ is given by $T^{*0,q}X:=\Lambda^q(T^{*0,1}X)$. The Hermitian metric 
$\langle\, \cdot \,|\, \cdot \,\rangle$ on $TX\otimes\mathbb{C}$ induces a Hermitian metric $\langle\, \cdot \,|\, \cdot \,\rangle$ on $T^{*0,q}X$. 

Let $D$ be an open set of $X$. Let $\Omega^{0,q}(D)$ denote the space of smooth sections of $T^{*0,q}X$ over $D$ and let $\Omega^{0,q}_c(D)$ be the subspace of $\Omega^{0,q}(D)$ whose elements have compact support in $D$. Let 
\[
\ddbar_b:\Omega^{0,q}(X)\To\Omega^{0,q+1}(X)
\]
be the tangential Cauchy-Riemann operator. Let $\mathrm{d}v(x)$ be the volume form on $X$ induced by the Hermitian metric $\langle\,\cdot\,|\,\cdot\,\rangle$.
The $L^2$-inner product $(\,\cdot\,|\,\cdot\,)$ on $\Omega^{0,q}(X)$ 
induced by $\mathrm{d}v(x)$ and $\langle\,\cdot\,|\,\cdot\,\rangle$ is given by
\[
(\,u\,|\,v\,):=\int_X\langle\,u(x)\,|\,v(x)\,\rangle\, \mathrm{d}v(x)\,,\quad u,v\in\Omega^{0,q}(X)\,.
\]
We denote by $L^2_{(0,q)}(X)$ 
the completion of $\Omega^{0,q}(X)$ with respect to $(\,\cdot\,|\,\cdot\,)$. 
We extend $(\,\cdot\,|\,\cdot\,)$ to $L^2_{(0,q)}(X)$ 
in the standard way. For every $f\in L^2_{(0,q)}(X)$, we denote by $\norm{f}^2:=(\,f\,|\,f\,)$.
We extend
$\ddbar_{b}$ to $L^2_{(0,r)}(X)$, $r=0,1,\ldots,n$, by
\[
\ddbar_{b}:{\rm Dom\,}\ddbar_{b}\subset L^2_{(0,r)}(X)\To L^2_{(0,r+1)}(X)\,,
\]
where ${\rm Dom\,}\ddbar_{b}:=\{u\in L^2_{(0,r)}(X);\, \ddbar_{b}u\in L^2_{(0,r+1)}(X)\}$ and, 
for any $u\in L^2_{(0,r)}(X)$, $\ddbar_{b} u$ is defined in the sense of distributions. 
We also write
\[
\ol{\pr}^{*}_{b}:{\rm Dom\,}\ol{\pr}^{*}_{b}\subset L^2_{(0,r+1)}(X)\To L^2_{(0,r)}(X)
\]
to denote the Hilbert space adjoint of $\ddbar_{b}$ in the $L^2$ space with respect to the inner product $(\,\cdot\,|\,\cdot\, )$.
Let $\Box^{q}_{b}$ denote the so called Gaffney extension of the Kohn Laplacian which is given by
\begin{equation}\label{e-suIX}
\renewcommand{\arraystretch}{1.2}
\begin{array}{c}
{\rm Dom\,}\Box^q_{b}=\left\{s\in L^2_{(0,q)}(X);\, 
s\in{\rm Dom\,}\ddbar_{b}\cap{\rm Dom\,}\ol{\pr}^{*}_{b},\,
\ddbar_{b}s\in{\rm Dom\,}\ol{\pr}^{*}_{b}, \, \ol{\pr}^{*}_{b}s\in{\rm Dom\,}\ddbar_{b}\right\}\,,\\
\Box^{q}_{b}s=\ddbar_{b}\ol{\pr}^{*}_{b}s+\ol{\pr}^{*}_{b}\ddbar_{b}s
\:\:\text{for $s\in {\rm Dom\,}\Box^q_{b}$}\,.
 \end{array}
\end{equation}
Eventually, let
\begin{equation}\label{e-suXI-I}
S^{(q)}:L^2_{(0,q)}(X)\To{\rm Ker\,}\Box^q_b
\end{equation}
be the orthogonal projection with respect to the $L^2$-inner product $(\,\cdot\,|\,\cdot\,)$ and let
\[
S^{(q)}(x,y)\in\mathcal{D}'(X\times X,T^{*0,q}X\boxtimes(T^{*0,q}X)^*)
\]
denote the distribution kernel of $S^{(q)}$, which is called Szeg\H{o} kernel for $(0,q)$-forms. 

\section{Functional calculus for Toeplitz operators}\label{s-gue211214yydI}

In this section, we will show that the functional calculus for Toeplitz operators are complex Fourier integral operators. In the following section we introduce some symbol spaces. 

\subsection{Some symbol spaces}\label{s-gue210914yyd}

First, we recall H\"ormander symbol spaces. Let $D\subset X$ be a local coordinate patch with local coordinates $x=(x_1,\ldots,x_{2n+1})$. 

\begin{defn}\label{d-gue211126yyd}
	For $m\in\Real$, $S^m_{1,0}(D\times D\times\mathbb{R}_+,T^{*0,q}X\boxtimes(T^{*0,q}X)^*)$ 
	is the space of all $$a(x,y,t)\in\mathcal{C}^\infty(D\times D\times\mathbb{R}_+,T^{*0,q}X\boxtimes(T^{*0,q}X)^*)$$ 
	such that, for all compact $K\Subset D\times D$ and all $\alpha, \beta\in\mathbb N^{2n+1}_0$, $\gamma\in\mathbb N_0$, 
	there is a constant $C_{\alpha,\beta,\gamma}>0$ such that 
	\[\abs{\pr^\alpha_x\pr^\beta_y\pr^\gamma_t a(x,y,t)}\leq C_{\alpha,\beta,\gamma}(1+\abs{t})^{m-\gamma},\ \ 
	\mbox{for every $(x,y,t)\in K\times\Real_+, t\geq1$}.\]
	Furthermore, put 
	\[
	S^{-\infty}(D\times D\times\mathbb{R}_+,T^{*0,q}X\boxtimes(T^{*0,q}X)^*) :=\bigcap_{m\in\Real}S^m_{1,0}(D\times D\times\mathbb{R}_+,T^{*0,q}X\boxtimes(T^{*0,q}X)^*).
	\]
	We sometimes simply write $S^{m}_{1,0}$ to denote $S^m_{1,0}(D\times D\times\mathbb{R}_+,T^{*0,q}X\boxtimes(T^{*0,q}X)^*)$, $m\in\mathbb R\cup\set{-\infty}$. 
	Let $a_j\in S^{m_j}_{1,0}$, $j=0,1,2,\ldots$ with $m_j\To-\infty$, as $j\To\infty$. Then there exists $a\in S^{m_0}_{1,0}$ 	unique modulo $S^{-\infty}$, such that 	$a-\sum^{k-1}_{j=0}a_j\in S^{m_k}_{1,0}$ for $k=0,1,2,\ldots$. 
	
	If $a$ and $a_j$ have the properties above, we write $$\mbox{$a\sim\sum^{\infty}_{j=0}a_j$ in $S^{m_0}_{1,0}\left(D\times D\times\mathbb{R}_+,T^{*0,q}X\boxtimes(T^{*0,q}X)^*\right)\,.$}$$ 
	We write
	\[
	s(x, y, t)\in S^{m}_{{\rm cl\,}}(D\times D\times\mathbb{R}_+,T^{*0,q}X\boxtimes(T^{*0,q}X)^*)
	\]
	if $s(x, y, t)\in S^{m}_{1,0}(D\times D\times\mathbb{R}_+,T^{*0,q}X\boxtimes(T^{*0,q}X)^*)$ and 
	\begin{align*}
		s(x, y, t)\sim\sum^\infty_{j=0}s^j(x, y)t^{m-j}\text{ in }S^{m}_{1, 0}
		(D\times D\times\mathbb{R}_+\,,T^{*0,q}X\boxtimes(T^{*0,q}X)^*)\,,
	\end{align*}
	where $s^j(x, y)\in\mathcal{C}^\infty(D\times D,T^{*0,q}X\boxtimes(T^{*0,q}X)^*),\ j\in\N_0$.
\end{defn} 


Let $D\subset X$ be an open set. Let
$L^m_{{\rm cl\,}}(D,T^{*0,q}X\boxtimes (T^{*0,q}X)^*)$
denote the space of classical
pseudodifferential operators on $D$ of order $m$ from sections of
$T^{*0,q}X$ to sections of $T^{*0,q}X$. Let $P\in L^m_{{\rm cl\,}}(D,T^{*0,q}X\boxtimes (T^{*0,q}X)^*)$. We write $\sigma^0_P$ to denote the principal symbol of $P$.

Let $P\in L^0_{{\rm cl\,}}(D,T^{*0,q}X\boxtimes (T^{*0,q}X)^*)$ be a self-adjoint operator with positive scalar principal symbol. 
Let $\tau\in\mathcal{C}^\infty_c(I)$, $I\subset\mathbb R$ is an open interval. In the next sections we are going to study $\tau(T^{(q)}_{P})$, where $\tau(T^{(q)}_{P})$ denotes the functional calculus of $T^{(q)}_{P}$. Here $T^{(q)}_P$ denotes the Toeplitz operator with symbol $P$ (see \eqref{e-gue211126ycdg}). Thus, we need to introduce another symbol spaces. Let $U \Subset \mathbb{C}$ be an open set such with $U\cap \mathbb{R}\neq \emptyset$. Let $\hat U:=\{z\in U;\, {\rm Im\,}z\neq0\}$. 

\begin{defn}\label{d-gue140221a}
	For $m\in\Real$, the space $S^m_{z}(D\times D\times\mathbb{R}_+,T^{*0,q}X\boxtimes(T^{*0,q}X)^*)$ 
	is the set of all $a(x,y,t,z)\in\mathcal{C}^\infty(D\times D\times\mathbb{R}_+\times \hat{U},T^{*0,q}X\boxtimes(T^{*0,q}X)^*)$ such that for every $\alpha,\,\beta \in \mathbb{N}^{2n+1}_0$, $\gamma\in\mathbb N_0$ there exists $N(\alpha,\,\beta,\,\gamma)\in\mathbb{N}$ such that for every compact set $K\Subset D\times D$, we have
	\[\abs{\pr^\alpha_x\pr^\beta_y\pr^\gamma_t a(x,y,t,z)}\leq C_{K}\abs{\mathrm{Im}z}^{-N(\alpha,\beta,r)}(1+\abs{t})^{m-\gamma},\ \ 
	\mbox{for every $(x,y,t,z)\in K\times\Real_+\times\hat U, t\geq1$},\]
	where $C_{K}>0$ is a constant independent of $z$.  
\end{defn} 

\begin{defn}\label{d-gue140221b}
	With the notations used above, the space $\hat{\mathcal{C}}^\infty(D\times D\times\hat{U},T^{*0,q}X\boxtimes(T^{*0,q}X)^*)$ is  the set of all $f\in\mathcal{C}^\infty(D\times D\times\hat{U},T^{*0,q}X\boxtimes(T^{*0,q}X)^*)$ such that for every $\alpha,\,\beta \in \mathbb{N}^{2n+1}_0$ there exists $N(\alpha,\,\beta)\in\mathbb{N}$ such that for every compact set $K\Subset D\times D$, we have
	\[\abs{\pr^\alpha_x\pr^\beta_y f(x,y,z)}\leq C_{K}\abs{\mathrm{Im}z}^{-N(\alpha,\beta)},\ \ 
	\mbox{for every $(x,y,z)\in K\times\hat U$},\]
	where $C_{K}>0$ is a constant independent of $z$.  
	
	We define the space $\hat{\mathcal{C}}^\infty(X\times X\times\hat{U},T^{*0,q}X\boxtimes(T^{*0,q}X)^*)$ in the similar way. 
\end{defn} 

\begin{rem}\label{r-gue211201yyd}
We identify $D$ with an open set of $\mathbb R^{2n+1}$. Let $D^{\mathbb C}$ be an open set of $\mathbb C^{2n+1}$ with $D^{\mathbb C}\cap\mathbb R^{2n+1}=D$. 
We can also define the space $\hat{\mathcal{C}}^\infty(D^{\mathbb C}\times D^{\mathbb C}\times\hat{U},T^{*0,q}X\boxtimes(T^{*0,q}X)^*)$ in the same way. From now on, for any $f\in\mathcal{C}^\infty(D\times D\times\hat{U},T^{*0,q}X\boxtimes(T^{*0,q}X)^*)$, we always take an almost analytic extension $\Td f$ of $f$ so that 
$\Td f\in\hat{\mathcal{C}}^\infty(D^{\mathbb C}\times D^{\mathbb C}\times\hat{U},T^{*0,q}X\boxtimes(T^{*0,q}X)^*)$ (it is straightforward to see that this is always possible). 
\end{rem}

\begin{defn} \label{d-gue211126yyda}
With the notations used above, fix $m\in\mathbb{R}$. For every $$a\in S^m_{z}(D\times D\times\mathbb{R}_+,T^{*0,q}X\boxtimes(T^{*0,q}X)^*)$$ we say that 
	$$a\in S^m_{z,\mathrm{cl}}(D\times D\times\mathbb{R}_+,T^{*0,q}X\boxtimes(T^{*0,q}X)^*)$$
	if we can find $a_j\in \hat{\mathcal{C}}^\infty(D\times D\times \hat{U},T^{*0,q}X\boxtimes(T^{*0,q}X)^*)$, $j=0,\,1,\,\dots$, such that
	\begin{equation}\label{e-gue211126yyds}
	a(x,\,y,\,t,\,z)- \sum^N_{j=0}a_j(x,\,y,\,z)\,t^{m-j}\in S^{m-N-1}_{z}(D\times D\times\mathbb{R}_+,T^{*0,q}X\boxtimes(T^{*0,q}X)^*)
	\end{equation}
	for all $N\in \mathbb{N}$.
\end{defn} 

In the same way as for H\"ormander symbol spaces, put 
\[
S^{-\infty}_z(D\times D\times\mathbb{R}_+,T^{*0,q}X\boxtimes(T^{*0,q}X)^*) :=\bigcap_{m\in\Real}S^m_{z}(D\times D\times\mathbb{R}_+,T^{*0,q}X\boxtimes(T^{*0,q}X)^*).
\]
\begin{lem}
With the notations used above, let $a_j\in S^{m_j}_{z,\,\mathrm{cl}}(D\times D\times\mathbb{R}_+,T^{*0,q}X\boxtimes(T^{*0,q}X)^*)$, 
	$j=0,1,2,\ldots$ with $m_j\To-\infty$, as $j\To\infty$. 
	Then there exists 
	$$a\in S^{m_0}_{z,\,\mathrm{cl}}(D\times D\times\mathbb{R}_+,T^{*0,q}X\boxtimes(T^{*0,q}X)^*)$$
	unique modulo $S^{-\infty}_z$, such that 
	\begin{equation} \label{eq: aaj}
		a-\sum^{k-1}_{j=0}a_j\in S^{m_k}_{z,\,\mathrm{cl}}\left(D\times D\times\mathbb{R}_+,T^{*0,q}X\boxtimes(T^{*0,q}X)^*\right)
	\end{equation}
	for $k=1,2,\ldots$. If $a$ and $a_j$ have the properties above, we write $$\mbox{$a\sim\sum^{\infty}_{j=0}a_j$ in $S^{m_0}_{z,\,\mathrm{cl}}\left(D\times D\times\mathbb{R}_+,T^{*0,q}X\boxtimes(T^{*0,q}X)^*\right)$ \,.}$$ 
\end{lem}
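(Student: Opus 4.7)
The strategy is the classical Borel summation argument adapted to the parametric symbol spaces $S^{m}_{z,\mathrm{cl}}$. After passing to a subsequence and reindexing I would first reduce to the case $m_j=m_0-j$; this is harmless since one may group together any $a_j$'s of equal order and modify each by a remainder of strictly lower order. Fix a cutoff $\chi\in\mathcal{C}^\infty(\mathbb{R})$ with $\chi\equiv 0$ on $(-\infty,1]$ and $\chi\equiv 1$ on $[2,\infty)$, together with a sequence $\lambda_j\uparrow\infty$ to be chosen. The candidate is
\[a(x,y,t,z):=\sum_{j=0}^{\infty}\chi(t/\lambda_j)\,a_j(x,y,t,z).\]
On $\{t\geq\lambda_j\}$ one has $(1+t)^{m_j-\gamma}\leq\lambda_j^{m_j-m_0}(1+t)^{m_0-\gamma}$, which controls each tail term by a power of $\lambda_j$ times the target estimate for $a\in S^{m_0}_z$. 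A standard diagonal procedure, in which $\lambda_j$ is chosen to dominate the first $j$ seminorm constants on the first $j$ compacta and to absorb the $\mathrm{Im}\,z$-exponents $N_j(\alpha,\beta,\gamma)$ appearing in the definition of $a_j$, yields absolute convergence of the series in every seminorm of $S^{m_0}_z$ and the tail estimate $a-\sum_{j<k}\chi(t/\lambda_j)a_j\in S^{m_k}_z$. Each neglected piece $a_j\bigl(1-\chi(t/\lambda_j)\bigr)$ is $t$-supported in $[0,2\lambda_j]$, hence smoothing in $t$ and in $S^{-\infty}_z$, so $a-\sum_{j<k}a_j\in S^{m_k}_z$ for every $k$.

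To upgrade from $S^{m_k}_z$ to $S^{m_k}_{z,\mathrm{cl}}$, I would expand $a_j\sim\sum_{k\geq 0}a_{j,k}(x,y,z)\,t^{m_j-k}$ with $a_{j,k}\in\hat{\mathcal{C}}^\infty$, and define the diagonal coefficients $c_\ell:=\sum_{j+k=\ell}a_{j,k}$, a \emph{finite} sum for each $\ell$, again in $\hat{\mathcal{C}}^\infty$. The key identity is
\[a-\sum_{\ell=0}^{L-1}c_\ell(x,y,z)\,t^{m_0-\ell}=\Big(a-\sum_{j<L}a_j\Big)+\sum_{j<L}\Big(a_j-\sum_{k=0}^{L-1-j}a_{j,k}(x,y,z)\,t^{m_j-k}\Big).\]
The first parenthesis lies in $S^{m_L}_z$ by the Borel step, while each summand of the second sum lies in $S^{m_j-(L-j)}_z=S^{m_0-L}_z=S^{m_L}_z$ by the classical expansion of $a_j$. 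Hence $a\in S^{m_0}_{z,\mathrm{cl}}$ with expansion $\sum_\ell c_\ell\,t^{m_0-\ell}$, and truncating the same identity at $L=k$ gives $a-\sum_{j<k}a_j\in S^{m_k}_{z,\mathrm{cl}}$. Uniqueness modulo $S^{-\infty}_z$ is then immediate: a second candidate $a'$ satisfies $a-a'\in S^{m_k}_z$ for every $k$, hence $a-a'\in\bigcap_k S^{m_k}_z=S^{-\infty}_z$.

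The main technical obstacle I anticipate is the handling of the $z$-parameter: the $\mathrm{Im}\,z$-exponents $N_j(\alpha,\beta,\gamma)$ from the individual $a_j$ need not be uniform in $j$, so one cannot simply extract a single exponent for the limit. The resolution is to let the cutoff scaling $\lambda_j$ absorb this blow-up, which is permissible because $\chi$ depends only on $t$ and hence derivatives in $x,y$ do not interact with $\lambda_j$, and only finitely many $\lambda_j$'s enter any given seminorm bound. With this bookkeeping the argument reduces to the non-parametric H\"ormander Borel lemma.
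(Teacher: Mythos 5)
Your overall architecture (Borel summation with cutoffs, then diagonal resummation of the coefficients to recover classicality, then uniqueness via $\bigcap_k S^{m_k}_z=S^{-\infty}_z$) is the right shape, and the second half of your argument is fine. But the step you yourself flag as ``the main technical obstacle'' is exactly where the proof breaks, and your proposed resolution does not work. With a cutoff $\chi(t/\lambda_j)$ depending on $t$ alone, the factor $\abs{{\rm Im\,}z}^{-N_j(\alpha,\beta,\gamma)}$ carried by $a_j$ is completely untouched by the choice of $\lambda_j$: on the support of $\chi(t/\lambda_j)$ you gain a small constant $\lambda_j^{\,m_j-m_0}$ in the $t$-variable, but nothing in $z$. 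The assertion that ``only finitely many $\lambda_j$'s enter any given seminorm bound'' is false for the seminorms of $S^{m_0}_z$, which require a \emph{single} exponent $N(\alpha,\beta,\gamma)$ valid for the supremum over all $t\geq 1$ and all $z\in\hat U$ simultaneously. Concretely, take $a_j(x,y,t,z)=\abs{{\rm Im\,}z}^{-j}b_j(x,y)\,t^{m_j}$ with $b_j>0$; each $a_j$ lies in $S^{m_j}_{z,\mathrm{cl}}$ with $N_j(0,0,0)=j$, yet for any fixed $N$ and any $t\geq 2\lambda_J$ with $J>N$ the sum satisfies $\abs{a(x,y,t,z)}\,\abs{{\rm Im\,}z}^{N}(1+t)^{-m_0}\gtrsim \abs{{\rm Im\,}z}^{-(J-N)}\to\infty$ as ${\rm Im\,}z\to 0$. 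No choice of the constants $\lambda_j$ can prevent this, so the limit is not in $S^{m_0}_z$. This is not a pathological case: in the parametrix construction of Theorem~\ref{lem:Tgmf} the symbols $b_j$ genuinely accumulate powers of $(z-\sigma^0_P)^{-1}$, so $N_j\to\infty$ there.

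The paper's fix is to make the cutoff $z$-dependent: it sums $a_j(x,y,t,z)\bigl(1-\chi(t\abs{{\rm Im\,}z}^{N_j}\epsilon_j^{N_j})\bigr)$. On the support of $1-\chi$ one has $t\abs{{\rm Im\,}z}^{N_j}\epsilon_j^{N_j}\geq 1$, hence $\abs{{\rm Im\,}z}^{-1}\leq \epsilon_j\,t^{1/N_j}$, so choosing $N_j$ larger than the exponents $N_j(\alpha,\beta,\gamma)$ relevant up to order $j$ converts the singular factor $\abs{{\rm Im\,}z}^{-N_j(\alpha,\beta,\gamma)}$ into at most $\epsilon_j\, t$, which is then absorbed by the rapidly decreasing orders $t^{m_j}$ (this is why the paper's normalization \eqref{eq:aj} allows the exponent $m_j+1-j$ rather than $m_j-j$). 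The discarded pieces $a_j\chi(t\abs{{\rm Im\,}z}^{N_j}\epsilon_j^{N_j})$ are supported in $\set{t\leq 2\abs{{\rm Im\,}z}^{-N_j}\epsilon_j^{-N_j}}$ and hence still lie in $S^{-\infty}_z$ (with $z$-exponents growing with the order of decay, which the definition permits). If you replace your $\chi(t/\lambda_j)$ by this $z$-coupled cutoff, the rest of your write-up, including the classical-expansion bookkeeping via $c_\ell=\sum_{j+k=\ell}a_{j,k}$, goes through (after noting that the reindexing to $m_j=m_0-j$ requires the orders to be compatible with the integer-step expansions in the definition of $S^{m}_{z,\mathrm{cl}}$, as they are in all applications in the paper).
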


\begin{proof}
	Let $K_0 \Subset K_1  \Subset\cdots  \Subset D$, $\cup_{j=1}^{\infty}K_j=D$. For every $j=0,1,2,\dots$, take $0<\epsilon_j< 1$ small enough and $N_j\in \mathbb{N}$ so that
	\begin{equation}\label{eq:aj} \lVert a_j(x,y,t,z)(1-\chi(t\lvert \mathrm{Im}z \rvert^{N_j}\epsilon^{N_j}_j)) \rVert_{\mathcal C^j(K_j\times K_j)} \leq 2^{-j}\,t^{m_j+1-j}\,,
	\end{equation}
	for every $z\in \hat{U}$, $t\geq 1$, where $\chi \in \mathcal{C}^{\infty}_c(\mathbb{R})$, $\chi \equiv 1$ on $[-1,\,1]$. Let
	\[a(x,y,t,z)= \sum_{j=0}^{+\infty} a_j(x,y,t,z)(1-\chi(t\lvert \mathrm{Im}z \rvert^{N_j}\epsilon^{N_j}_j))\,. \]
	From \eqref{eq:aj} we can check that $a$ is well defined as a smooth function on $D\times D\times \mathbb{R}_+\times \hat{U}$, $a\in \hat{S}^{m_0}_{z,{\rm cl\,}}$ and satisfies \eqref{eq: aaj}.
\end{proof}

\subsubsection{Fourier integral operators of Szeg\H{o} type}

We are now going to recall results about Toeplitz operators relying on \cite{gh}. In this work, we work with Assumption~\ref{a-gue211126yyd}.


 Let us adopt the following notation
\begin{equation*}
	\Sigma=\Sigma^-\cup\Sigma^+,\,\Sigma^-=\set{(x,\lambda\omega_0(x))\in T^*X;\,\lambda<0},\,\Sigma^+=\set{(x,\lambda\omega_0(x))\in T^*X;\,\lambda>0}.
\end{equation*}
It is known that (see~\cite[Theorem 1.2]{hsiao},~\cite[Theorem 4.7]{HM14}) there exist continuous operators
$S_-, S_+: L^2_{(0,q)}(X)\To{\rm Ker\,}\Box^q_b$ such that 
\[S^{(q)}=S_-+S_+,\ \ S^*_-=S^*_-,\ \ S^*_+=S^*_+,\,S_-=S^{(q)}S_-,\ \ S_+=S^{(q)}S_+,\]
\[	{\rm WF'\,}(S_-)={\rm diag\,}(\Sigma^-\times\Sigma^-),S_+\equiv0\ \ \mbox{if $q\neq n_+$},\,
	{\rm WF'\,}(S_+)={\rm diag\,}(\Sigma^+\times\Sigma^+)\ \ \mbox{if $q=n_-=n_+$},
\]
where $S^*_{\mp}$ is the adjoint of $S_{\mp}$, ${\rm WF\,}(S_{\mp})$ is the wave front set of $S_{\mp}$ in the sense of H\"ormander. We have the following (see~\cite[Theorem 1.2]{hsiao},~\cite[Theorem 4.7]{HM14}) about micro-local properties of the Szeg\H{o} kernel.

\begin{thm}\label{t-gue161109I}
	Let $D\subset X$ be any local coordinate patch with coordinates $x=(x_1,\ldots,x_{2n+1})$, then 
	$S_-(x,y)$, $S_+(x,y)$ satisfy
	\[S_{\mp}(x, y)\equiv\int^{\infty}_{0}e^{i\varphi_{\mp}(x, y)t}s_{\mp}(x, y, t)\mathrm{d}t\ \ \mbox{on $D$},\]
	with 
	\begin{equation}  \label{e-gue161110r}
		s_{\mp}(x,y,t)\sim\sum^\infty_{j=0}s^j_{\mp}(x, y)t^{n-j}\in S^{n}_{1, 0},\,\text{ where }
		s^j_{\mp}(x, y)\in\mathcal{C}^\infty(D\times D,T^{*0,q}X\boxtimes(T^{*0,q}X)^*),
	\end{equation}
	$s_+(x,y,t)=0$ if $q\neq n_+$ and $s^0_-(x,x)\neq0$, for all $x\in D$. The phase functions $\varphi_-$, $\varphi_+$  satisfy
	\[
	\renewcommand{\arraystretch}{1.2}
	\begin{array}{ll}
		&\varphi_+, \varphi_-\in\mathcal{C}^\infty(D\times D),\ \ {\rm Im\,}\varphi_{\mp}(x, y)\geq0,
		\varphi_-(x, x)=0,\ \ \varphi_-(x, y)\neq0\ \ \mbox{if}\ \ x\neq y,\\
		&\mathrm{d}_x\varphi_-(x, y)\big|_{x=y}=-\omega_0(x), \ \ \mathrm{d}_y\varphi_-(x, y)\big|_{x=y}=\omega_0(x), 
		-\ol\varphi_+(x, y)=\varphi_-(x,y).
	\end{array}
	\]
\end{thm}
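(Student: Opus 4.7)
The plan is to follow the Boutet de Monvel--Sj\"ostrand parametrix construction, extended to the non-pseudoconvex but non-degenerate case. First I would argue that under Assumption~\ref{a-gue211126yyd}, the Szeg\H{o} projection $S^{(q)}$ is microlocally concentrated on the characteristic variety of $\Box^q_b$, which coincides with $\Sigma=\Sigma^-\cup\Sigma^+$. Since $\Box^q_b$ is subelliptic off $\Sigma$, and $\Sigma^-$, $\Sigma^+$ are disjoint conic components, the $L^2$-closed range and self-adjointness of the Kohn Laplacian yield a microlocal decomposition $S^{(q)}\equiv S_-+S_+$ with $S_\mp^*=S_\mp$, $S_\mp = S^{(q)}S_\mp$, and wavefront sets contained in ${\rm diag\,}(\Sigma^\mp\times\Sigma^\mp)$; the two pieces can then be constructed independently.

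To produce $S_-$ on a local coordinate patch $D$, I would use the oscillatory integral ansatz $\int_0^\infty e^{i\varphi_-(x,y)t}s_-(x,y,t)\,\mathrm{d}t$ and impose $\Box^q_b S_- \equiv 0$ together with $S_- \equiv S^{(q)} S_-$. Matching principal symbols on $\Sigma^-$ yields an eikonal equation for $\varphi_-$, with Cauchy data on the diagonal pinned by the requirement that the critical set of $t \mapsto \mathrm{Im}(t\varphi_-)$ lie on $\Sigma^-$: explicitly $\varphi_-(x,x)=0$ and $\mathrm{d}_x\varphi_-|_{x=y}=-\omega_0(x)$. Solving this equation within the Melin--Sj\"ostrand calculus of almost analytic complex phase functions produces $\varphi_-\in\mathcal{C}^\infty(D\times D)$ with $\mathrm{Im}\,\varphi_-\geq 0$, vanishing only on the diagonal; non-degeneracy of the Levi form is precisely what forces the Hessian of $\mathrm{Im}\,\varphi_-$ at the diagonal to be positive definite on the directions transverse to the diagonal.

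For the amplitude, I would solve a cascade of transport equations along the Hamilton flow of the almost analytic extension of the principal and subprincipal symbols of $\Box^q_b$, producing $s_-^j\in\mathcal{C}^\infty(D\times D, T^{*0,q}X\boxtimes(T^{*0,q}X)^*)$ iteratively, and then Borel sum them to get $s_-(x,y,t)\sim\sum_j s_-^j(x,y)t^{n-j}\in S^n_{1,0}$. The leading symbol $s_-^0(x,x)$ comes out proportional to the orthogonal projector onto the ground state of the model harmonic oscillator attached to $\mathcal{L}_x$ at degree $q=n_-$, and this projector is nonzero precisely when $q=n_-$, giving $s_-^0(x,x)\neq 0$. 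For $\Sigma^+$, if $q\neq n_+$ the model operator has no ground state at degree $q$, so $S_+\equiv 0$; if $q=n_-=n_+$, the analogous construction applies on $\Sigma^+$, and self-adjointness of $S^{(q)}$ together with complex conjugation symmetry forces $\varphi_+=-\ol{\varphi_-}$.

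The main obstacle is constructing the complex phase $\varphi_-$: one must solve a complex eikonal equation and verify $\mathrm{Im}\,\varphi_-\geq 0$ with equality only on the diagonal, which requires an almost analytic extension in the Melin--Sj\"ostrand framework and a careful Hessian computation exploiting the non-degeneracy of $\mathcal{L}_x$. A secondary difficulty is matching $s_-^0$ to the orthogonal projection onto $\ker\Box^q_b$: this reduces to identifying $s_-^0(x,x)$ with the ground state projector of the model harmonic oscillator, so that both the normalization $s_-^0(x,x)\neq 0$ and the property that $S_-$ lands in $\ker\Box^q_b$ follow.
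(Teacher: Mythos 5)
This theorem is not proved in the paper at all: it is quoted verbatim from \cite[Theorem 1.2]{hsiao} and \cite[Theorem 4.7]{HM14}, so there is no in-paper argument to compare against. Your sketch is a faithful outline of the proof strategy in those references (microlocal Hodge decomposition of $\Box^q_b$ near $\Sigma^\mp$ under the closed-range hypothesis, Melin--Sj\"ostrand complex eikonal equation with Cauchy data $\varphi_-(x,x)=0$, $\mathrm{d}_x\varphi_-|_{x=y}=-\omega_0$, transport equations for the amplitude, and identification of $s^0_\mp(x,x)$ with the ground-state projector of the model harmonic oscillator, which vanishes unless $q=n_\mp$), so it is consistent with the intended proof.
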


The following result describes the phase function in local coordinates (see chapter 8 of part I in \cite{hsiao}).

\begin{thm} \label{t-gue161110g}
	For a given point $p\in X$, let $\{W_j\}_{j=1}^{n}$
	be an orthonormal frame of $T^{1, 0}X$ in a neighborhood of $p$
	such that
	the Levi form is diagonal at $p$, i.e.\ $\mathcal{L}_{p}(W_{j},\overline{W}_{s})=\delta_{j,s}\mu_{j}$, $j,s=1,\ldots,n$.
	We take local coordinates $x=(x_1,\ldots,x_{2n+1})$, $z_j=x_{2j-1}+ix_{2j}$, $j=1,2,\ldots,n$, defined on some neighborhood of $p$ such that $\omega_0(p)=\mathrm{d}x_{2n+1}$, $x(p)=0$, and, for some $c_j\in\Complex$, $j=1,\ldots,n$\,,
	\begin{equation}\label{e-gue161219a}
		R=-\frac{\pr}{\pr x_{2n+1}},\quad
		W_j=\frac{\pr}{\pr z_j}-i\mu_j\ol z_j\frac{\pr}{\pr x_{2n+1}}-
		c_jx_{2n+1}\frac{\pr}{\pr x_{2n+1}}+\sum^{2n}_{k=1}a_{j,k}(x)\frac{\pr}{\pr x_k}+O(\abs{x}^2)\,,
	\end{equation}
	where $j=1,\ldots,n$, $a_{j,k}\in\mathcal{C}^\infty(X)$, $a_{j,k}(x)=O(\abs{x})$, for every $j=1,\ldots,n$, $k=1,\ldots,2n$. 
	Set
	$y=(y_1,\ldots,y_{2n+1})$, $w_j=y_{2j-1}+iy_{2j}$, $j=1,2,\ldots,n$.
	Then, for $\varphi_-$ in Theorem~\ref{t-gue161109I}, we have
	\begin{equation} \label{e-gue140205VI}
		{\rm Im\,}\varphi_-(x,y)\geq c\sum^{2n}_{j=1}\abs{x_j-y_j}^2,\ \ c>0,
	\end{equation}
	in some neighborhood of $(0,0)$ and
	\begin{equation} \label{e-gue140205VII}
		\renewcommand{\arraystretch}{1.2}
		\begin{array}{l}
			\varphi_-(x, y)=-x_{2n+1}+y_{2n+1}+i\sum^{n}_{j=1}\abs{\mu_j}\abs{z_j-w_j}^2 \\
			\ +\sum^{n}_{j=1}\left(i\mu_j(\ol z_jw_j-z_j\ol w_j)+c_j(-z_jx_{2n+1}+w_jy_{2n+1})+\ol c_j(-\ol z_jx_{2n+1}+\ol w_jy_{2n+1})\right)\\
			\ +(x_{2n+1}-y_{2n+1})f(x, y) +O(\abs{(x, y)}^3),
		\end{array}
	\end{equation}
	where $f$ is smooth and satisfies $f(0,0)=0$.
\end{thm}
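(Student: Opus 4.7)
The plan is to pin down the Taylor expansion of $\varphi_-(x,y)$ at $(0,0)$ by combining the three diagonal normalizations from Theorem~\ref{t-gue161109I}---namely $\varphi_-(x,x)=0$, $d_x\varphi_-|_{x=y}=-\omega_0(x)$, and $d_y\varphi_-|_{x=y}=\omega_0(x)$---with a complex eikonal equation encoding the fact that the almost-analytic Lagrangian parametrized by $\varphi_-$ coincides with $\Sigma^-$ on the real diagonal. Recall that the phase of a complex Fourier integral operator of Szeg\H{o} type is only determined modulo an equivalence that preserves the positive Lagrangian it parametrizes, so it suffices to exhibit a representative whose Taylor expansion agrees with the right-hand side of \eqref{e-gue140205VII}.

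First, the boundary conditions together with $\omega_0(p)=dx_{2n+1}$ and $x(p)=0$ force $\varphi_-(x,y)=-x_{2n+1}+y_{2n+1}+Q(x,y)+O(|(x,y)|^3)$ for some quadratic form $Q$. Differentiating the identity $\varphi_-(x,x)\equiv 0$ twice and combining with the prescribed first-order derivatives on the diagonal reduces the free coefficients of $Q$ to the pure Hessian in $x$ alone.

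Second, the complex eikonal equation translates, in the chosen coordinates, into the requirement $\overline{W}_{j,y}\varphi_-(x,y)=O(|x-y|^\infty)$ for $j=1,\dots,n$. Substituting the explicit form \eqref{e-gue161219a} of the frame $\{W_j\}$ and matching coefficients at the quadratic level produces: the term $i\sum_j\mu_j(\overline{z}_jw_j-z_j\overline{w}_j)$ from the $-i\mu_j\overline{z}_j\partial_{x_{2n+1}}$ piece of $W_j$; the $c_j$- and $\overline{c}_j$-corrections from the $c_jx_{2n+1}\partial_{x_{2n+1}}$ piece; and the diagonal square $i\sum_j|\mu_j|\,|z_j-w_j|^2$, whose coefficient is fixed in absolute value by $Q(x,x)=0$ together with the sign requirement $\mathrm{Im}\,\varphi_-\geq 0$. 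The remainder $(x_{2n+1}-y_{2n+1})f(x,y)$ with $f(0,0)=0$ absorbs the remaining coupling between the transverse direction $x_{2n+1}$ and the horizontal coordinates.

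The main obstacle is to verify that the various constraints are mutually consistent: one must check that the ansatz coming from the eikonal equation satisfies all three diagonal normalizations simultaneously and that no further admissible second-order term survives modulo the phase equivalence. Once this is done, the lower bound \eqref{e-gue140205VI} is immediate: the Hessian of $\mathrm{Im}\,\varphi_-$ in the transverse horizontal variables $x_j-y_j$, $j=1,\dots,2n$, equals at $(0,0)$ the positive definite block-diagonal matrix with entries $2|\mu_j|$ by the non-degeneracy of the Levi form, and a shrinking-neighborhood argument absorbs the higher-order remainder into this leading quadratic. The full detailed bookkeeping is carried out in Chapter~8 of Part~I of~\cite{hsiao}, which we follow.
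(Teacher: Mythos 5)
The paper itself offers no proof of this theorem: it is imported verbatim from Chapter~8 of Part~I of \cite{hsiao} (the sentence preceding the statement is the only justification given), so your decision to defer the detailed bookkeeping to that reference matches what the authors do, and your outline --- diagonal normalizations from Theorem~\ref{t-gue161109I} plus an eikonal equation, then Taylor expansion in the adapted frame --- is indeed the route taken there.

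There is, however, one genuine imprecision, and it bites in exactly the mixed-signature setting this paper cares about. You impose the eikonal condition as $\overline{W}_{j,y}\varphi_-(x,y)=O(\abs{x-y}^\infty)$ for \emph{all} $j=1,\dots,n$. Plugging the claimed expansion \eqref{e-gue140205VII} into $\overline{W}_{j,y}$ gives $\overline{W}_{j,y}\varphi_-=-i(\abs{\mu_j}+\mu_j)(z_j-w_j)+O(\abs{(x,y)}^2)$, which vanishes beyond first order only when $\mu_j<0$; for $\mu_j>0$ it is $W_{j,y}\varphi_-$ that vanishes. Equivalently, imposing your condition for every $j$ would force the quadratic term $-i\sum_j\mu_j\abs{z_j-w_j}^2$ rather than $+i\sum_j\abs{\mu_j}\abs{z_j-w_j}^2$; the two agree only when $n_+=0$, and in the mixed case your eikonal system is incompatible with the positivity ${\rm Im\,}\varphi_-\geq0$ that you separately invoke to ``fix the sign''. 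The correct input is the factorization of the principal symbol of $\Box_b$ near $\Sigma^-$: the indices split according to the sign of $\mu_j$, with $\overline{W}_j\varphi_-$ vanishing to infinite order for $\mu_j<0$ and $W_j\varphi_-$ for $\mu_j>0$ --- this is precisely the point of the lower-energy-forms ($q=n_-$) construction as opposed to the strictly pseudoconvex Boutet de Monvel--Sj\"ostrand case. A secondary caveat: the bound \eqref{e-gue140205VI} is not ``immediate'' from the quadratic part, since the $c_j$-terms and $(x_{2n+1}-y_{2n+1})f(x,y)$ are genuinely quadratic cross terms with no sign; one needs ${\rm Im\,}\varphi_-\geq0$ together with the positive definiteness of the transversal Hessian (the standard but nontrivial lemma on nonnegative functions vanishing on a submanifold) to absorb them.
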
 

We pause and introduce some notations in order to given an explicit description for the leading term of the symbol of Szeg\H{o} kernel. For a given point $p\in X$, let $\{W_j\}_{j=1}^{n}$ be an
orthonormal frame of $(T^{1,0}X,\langle\,\cdot\,|\,\cdot\,\rangle)$ near $p$, for which the Levi form
is diagonal at $p$. We will denote by
\begin{equation}\label{det140530}
	\mathcal{L}_{p}(W_j,\ol W_\ell)=\mu_j(p)\delta_{j,\ell}\,,\;\; j,\ell=1,\ldots,n\,\quad\text{ and }\quad \det\mathcal{L}_{p}=\prod_{j=1}^{n}\mu_j(p)\,.
\end{equation}
Let $\{e_j\}_{j=1}^{n}$ denote the basis of $T^{*0,1}X$, dual to $\{\ol W_j\}^{n}_{j=1}$. We assume that, if $1\leq j\leq n_-$, $\mu_j(p)<0$ and if $n_-+1\leq j\leq n$ we have $\mu_j(p)>0$. Put
\[\mathcal{N}(p,n_-):=\set{ce_1(p)\wedge\ldots\wedge e_{n_-}(p);\, c\in\Complex},\,
\mathcal{N}(p,n_+):=\set{ce_{n_-+1}(p)\wedge\ldots\wedge e_{n}(p);\, c\in\Complex}
\]
and let
\begin{equation}\label{tau140530}
	\pi_{p,n_-}:T^{*0,q}_{p}X\To\mathcal{N}(p,n_-)\,,\quad
	\pi_{p,n_+}:T^{*0,q}_{p}X\To\mathcal{N}(p,n_+)\,,
\end{equation}
be the orthogonal projections onto $\mathcal{N}(p,n_-)$ and $\mathcal{N}(p,n_+)$
with respect to $\langle\,\cdot\,|\,\cdot\,\rangle$, respectively. For $J=(j_1,\ldots,j_q)$, $1\leq j_1<\cdots<j_q\leq n$, let 
$e_J:=e_{j_1}\wedge\cdots\wedge e_{j_q}$. For $\abs{I}=\abs{J}=q$, $I$, $J$ are strictly increasing, let $e_I\otimes(e_J)^*$ be the linear transformation from 
$T^{*0,q}X$ to $T^{*0,q}X$ given by 
\[(e_I\otimes(e_J)^*)(e_K)=\delta_{J,K}e_I,\]
for every $\abs{K}=q$, $K$ is strictly increasing, where $\delta_{J,K}=1$ if $J=K$, $\delta_{J,K}=0$ if $J\neq K$. For any $f\in T^{*0,q}X\boxtimes(T^{*0,q}X)^*$, we have \[f=\sideset{}{'}\sum_{\abs{I}=\abs{J}=q}c_{I,J}e_I\otimes(e_J)^*,\]
$c_{I,J}\in\mathbb C$, for all $\abs{I}=\abs{J}=q$, $I$, $J$ are strictly increasing, where $\sum'$ means that the summation is performed only over strictly increasing multi-indices.  We call $c_{I,J}e_I\otimes(e_J)^*$ the component of $f$ in the direction $e_I\otimes(e_J)^*$. Let $I_0=(1,2,\ldots,q)$. We can check that 
\[\pi_{p,n_-}=e_{I_0}(p)\otimes(e_{I_0}(p))^*.\]

The following formula for the leading term $s^0_-$ on the diagonal follows from \cite[\S 9]{hsiao}. The formula for the leading term $s^0_+$ on the diagonal follows similarly.

\begin{thm} \label{t-gue140205III}
	For the leading term $s^0_{-}(x,y)$ of the expansion \eqref{e-gue161110r} of $s_{-}(x,y,t)$, we have
	\[
	s^0_{-}(x_0, x_0)=\frac{1}{2}\pi^{-n-1}\abs{\det\mathcal{L}_{x_0}}\pi_{x_0,n_{-}}\,,\:\:x_0\in D.
	\]
\end{thm}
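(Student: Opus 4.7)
The plan is to compute $s^0_-(x_0, x_0)$ by combining the explicit normal form for the phase $\varphi_-$ from Theorem~\ref{t-gue161110g} with the three defining microlocal constraints on $S_-$: the equation $\Box^q_b S_- \equiv 0$, the idempotency $S_- \circ S_- \equiv S_-$, and the self-adjointness $S_-^* \equiv S_-$.

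First I would localize at an arbitrary point $x_0 \in D$ and adopt the coordinates $x=(x_1,\ldots,x_{2n+1})$ and the orthonormal frame $\{W_j\}_{j=1}^{n}$ of Theorem~\ref{t-gue161110g}, so that $x(x_0)=0$, $\omega_0(x_0)=dx_{2n+1}$, the Levi form is diagonal at $x_0$ with eigenvalues $\mu_1(x_0),\ldots,\mu_n(x_0)$, and $\varphi_-(x,y)$ has the explicit expansion \eqref{e-gue140205VII} whose imaginary part is positive-definite along transverse directions by \eqref{e-gue140205VI}. In the dual coframe $\{e_J\}$ I would expand both $s^0_-(x_0,x_0)$ and the principal symbol of $\Box^q_b$ at the characteristic point $(x_0,-\omega_0(x_0))\in\Sigma^-$.

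Next I would exploit $\Box^q_b S_- \equiv 0$: plugging the FIO representation of Theorem~\ref{t-gue161109I} into $\Box^q_b S_-$ and performing the usual stationary-phase reduction (the phase critical set is the diagonal in $\Sigma^-$) produces a transport equation for $s_-(x,y,t)$ whose leading order forces $s^0_-(x_0,x_0)$ to take values in the kernel of the principal symbol of $\Box^q_b$ at $(x_0,-\omega_0(x_0))$. Under Assumption~\ref{a-gue211126yyd} with $q=n_-$, that kernel is one-dimensional and, computed in the diagonalizing frame, is spanned exactly by $e_1(x_0)\wedge\cdots\wedge e_{n_-}(x_0)$. Self-adjointness then guarantees that $s^0_-(x_0,x_0)$ is a real multiple of the rank-one projection $\pi_{x_0,n_-}=e_{I_0}(x_0)\otimes(e_{I_0}(x_0))^*$ where $I_0=(1,\ldots,n_-)$.

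To pin down the scalar normalization $\tfrac{1}{2}\pi^{-n-1}|\det\mathcal{L}_{x_0}|$, I would use $S_-\circ S_-\equiv S_-$. Composing the two oscillatory integrals and applying stationary phase in the $y$-variable using the quadratic normal form of $\varphi_-$ yields a Gaussian integral whose determinant is exactly $|\det\mathcal{L}_{x_0}|$ (up to a power of $\pi$); matching leading symbols of both sides of the idempotency equation then gives a scalar equation that uniquely determines the coefficient. Equivalently, one may pass to the Heisenberg model associated to $(x_0,\mathcal{L}_{x_0})$: the Szeg\H{o} kernel of the model Kohn Laplacian in signature $(n_-,n_+)$ restricted to $(0,n_-)$-forms is explicitly computable of Mehler type, and the Boutet de Monvel--Sj\"ostrand parametrix construction ensures that the leading term of $s_-$ at $x_0$ coincides with the value at the origin of the model kernel; a direct computation of the model kernel at the origin gives $\tfrac{1}{2}\pi^{-n-1}|\det\mathcal{L}_{x_0}|$, with the factor $\tfrac{1}{2}$ arising from the one-sided $t$-integration over $(0,\infty)$ corresponding to the spectral half $\Sigma^-$.

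The main obstacle I anticipate is not the scalar computation but tracking the bundle factor cleanly: one has to verify that the off-diagonal components $e_I\otimes(e_J)^*$ with $(I,J)\neq(I_0,I_0)$ of the symbol vanish to leading order, which requires a careful identification of the subprincipal data of $\Box^q_b$ in the chosen frame and a check that the transport equation does not mix these components. Once the symbol is shown to be scalar along the $e_{I_0}\otimes(e_{I_0})^*$ direction, the remaining argument is a routine stationary-phase or model-kernel calculation.
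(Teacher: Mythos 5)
The paper offers no proof of this theorem: it simply states that the formula ``follows from \cite[\S 9]{hsiao}'', so what you are really being asked to reconstruct is the argument of the cited references. Your outline is correct and in fact combines the two standard routes that appear there: the characterization of $s^0_-$ via the three microlocal constraints $\Box^q_b S_-\equiv 0$, $S_-\circ S_-\equiv S_-$, $S_-^*\equiv S_-$ is the method of \cite{HM14}, while the reduction to the explicit Mehler-type kernel of the model (Heisenberg) Kohn Laplacian is closer to what is actually done in \cite[\S 9]{hsiao}, where the Szeg\H{o} kernel is obtained as the $t\to\infty$ limit of the heat semigroup and the constant $\tfrac{1}{2}\pi^{-n-1}\abs{\det\mathcal{L}_{x_0}}$ comes out of the explicit model computation. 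One imprecision worth correcting: the ``kernel of the principal symbol of $\Box^q_b$ at $(x_0,-\omega_0(x_0))$'' is not one-dimensional --- the principal symbol of $\Box^q_b$ vanishes identically on the characteristic variety $\Sigma$, so its kernel there is all of $T^{*0,q}_{x_0}X$. The object whose kernel is spanned by $e_1(x_0)\wedge\cdots\wedge e_{n_-}(x_0)$ is the microlocal model operator of $\Box^q_b$ along $\Sigma^-$ (a sum of harmonic-oscillator factors weighted by the Levi eigenvalues $\mu_j$ plus the curvature term $\sum_j\mu_j\,e_j\wedge(e_j\wedge)^*$-type contribution), which is exactly the ``subprincipal data'' you flag at the end; once that substitution is made, the transport-equation step and the normalization by idempotency go through as you describe.
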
 

We need to recall the following definition from \cite{gh}.

\begin{defn}\label{d-gue210624yyd}
	Let $H: \Omega^{0,q}(X)\To\Omega^{0,q}(X)$ be a continuous operator with distribution kernel $H(x,y)\in\mathcal{D}'(X\times X,T^{*0,q}X\boxtimes(T^{*0,q}X)^*)$. 
	We say that $H$ is a complex Fourier integral operator of Szeg\H{o} type of order $k\in\mathbb Z$ if $H$ is smoothing away the diagonal on $X$ and 
	for every local coordinate patch $D\subset X$ with local coordinates $x=(x_1,\ldots,x_{2n+1})$, we have on $D$
	\[H(x,y)\equiv H_-(x,y)+H_+(x,y)\,,\quad H_{\mp}(x,y)\equiv\int^\infty_0e^{i\varphi_{\mp}(x, y)t}a_{\mp}(x, y, t)\mathrm{d}t \,,\]
	where $a_-, a_+\in S^{k}_{{\rm cl\,}}(D\times D\times\mathbb{R}_+,T^{*0,q}X\boxtimes(T^{*0,q}X)^*)$, $a_+=0$ if $q\neq n_+$, $\varphi_-$, $\varphi_+$ are as in 
	Theorem~\ref{t-gue161109I} and Theorem~\ref{t-gue161110g}. We write $\sigma^0_{H,-}(x,y)$ to denote the leading term of the expansion \eqref{e-gue161110r} of $a_{-}(x,y,t)$. If $q=n_+$, we write $\sigma^0_{H,+}(x,y)$ to denote the leading term of the expansion \eqref{e-gue161110r} of $a_{+}(x,y,t)$. If $n_-\neq n_+$, we sometimes write $\sigma^0_H(x,y):=\sigma^0_{H,-}(x,y)$. Note that $\sigma^0_{H,-}(x,y)$ and $\sigma^0_{H,+}(x,y)$ depend on the choices of the phases $\varphi_-$ and $\varphi_+$ but $\sigma^0_{H,-}(x,x)$ and $\sigma^0_{H,+}(x,x)$ are independent of the choices of the phases $\varphi_-$ and $\varphi_+$.
	
	Let us denote by $\Psi_k(X)$ the space of all complex Fourier integral operators of Szeg\H{o} type of order $k$. 
\end{defn}

Let $U\Subset\mathbb C$ be an open set with $U\cap\mathbb R\neq\emptyset$. Let $\hat U:=\{z\in U;\, {\rm Im\,}z\neq0\}$. We need 

\begin{defn}\label{d-gue211126ycd}
With the notations used above, let $H_z: \Omega^{0,q}(X)\To\Omega^{0,q}(X)$ be a $z$-dependent continuous operator with distribution kernel $H_z(x,y)\in\mathcal{D}'(X\times X,T^{*0,q}X\boxtimes(T^{*0,q}X)^*)$, $z\in\hat U$. We write
\[H_z\equiv0\mod\hat{\mathcal{C}}^\infty\]
or \[H_z(x,y)\equiv0\mod\hat{\mathcal{C}}^\infty\]
if 
\[H_z(x,y)\in\hat{\mathcal{C}}^\infty(X\times X\times\hat U,T^{*0,q}X\boxtimes(T^{*0,q}X)^*).\]

If $H_z\equiv0\mod\hat{\mathcal{C}}^\infty$, we say that $H_z$ is $z$-smoothing.

We say that $H_z$ is $z$-smoothing away the diagonal if for every $\chi_1, \chi_2\in\mathcal{C}^\infty_c(X)$ with 
\[{\rm supp\,}\chi_1\cap{\rm supp\,}\chi_2=\emptyset,\] we have $\chi_1H_z\chi_2\equiv0\mod\hat{\mathcal{C}}^\infty$. 

Let $A_z: \Omega^{0,q}(X)\To\Omega^{0,q}(X)$ be another $z$-dependent continuous operator with distribution kernel $A_z(x,y)\in\mathcal{D}'(X\times X,T^{*0,q}X\boxtimes(T^{*0,q}X)^*)$, $z\in\hat U$. 
We write
\[H_z\equiv A_z\mod\hat{\mathcal{C}}^\infty\]
or \[H_z(x,y)\equiv A_z(x,y)\mod\hat{\mathcal{C}}^\infty\]
if $H_z-A_z\equiv0\mod\hat{\mathcal{C}}^\infty$. In this definition, we can replace $X$ by any open set $D$ of $X$. 
\end{defn}

Similarly we define

\begin{defn}\label{typez}
	With the notations used above, let $H_z: \Omega^{0,q}(X)\To\Omega^{0,q}(X)$ be a $z$-dependent continuous operator with distribution kernel $H_z(x,y)\in\mathcal{D}'(X\times X,T^{*0,q}X\boxtimes(T^{*0,q}X)^*)$, $z\in\hat U$. 
	We say that $H_z$ is a complex Fourier integral operator of $z$-Szeg\H{o} type of order $k\in\mathbb Z$ if $H_z$ is $z$-smoothing away the diagonal on $X$ and 
	for every local coordinate patch $D\subset X$ with local coordinates $x=(x_1,\ldots,x_{2n+1})$, we have on $D$
	\[H_z(x,y)\equiv H_{z,-}(x,y)+H_{z,+}(x,y)\mod\hat{\mathcal{C}}^\infty,\quad H_{z,\mp}(x,y)\equiv\int^\infty_0e^{i\varphi_{\mp}(x, y)t}a_{\mp}(x, y, t,z)\mathrm{d}t \,,\]
	where $a_-, a_+\in S^{k}_{z,{\rm cl\,}}(D\times D\times\mathbb{R}_+,T^{*0,q}X\boxtimes(T^{*0,q}X)^*)$, $a_+=0$ if $q\neq n_+$, $\varphi_-$, $\varphi_+$ are as in 
	Theorem~\ref{t-gue161109I} and Theorem~\ref{t-gue161110g}. We write $\sigma^0_{H_z,-}(x,y)$ to denote the leading term of the expansion \eqref{e-gue211126yyds} of $a_{-}(x,y,t,z)$. If $q=n_+$, we write $\sigma^0_{H_z,+}(x,y)$ to denote the leading term of the expansion \eqref{e-gue211126yyds} of $a_{+}(x,y,t,z)$. If $n_-\neq n_+$, we sometimes write $\sigma^0_{H_z}(x,y):=\sigma^0_{H_z,-}(x,y)$.
	
	Let us denote by $\Psi_{k,z}(X)$ the space of all complex Fourier integral operators of $z$-Szeg\H{o} type of order $k$. 
\end{defn}

The following is straightforward. We omit the details. 

\begin{lem}\label{l-gue211126yyd}
With the notations used above, let $D$ be a local coordinate patch of $X$ with local coordinates $x=(x_1,\ldots,x_{2n+1})$. Let 
\[A_z(x,y)=\int^\infty_0e^{i\varphi_{-}(x, y)t}t^kb(x, y, z)\mathrm{d}t,\]
where $k\in\mathbb Z$, $b(x,y,z)\in\hat{\mathcal{C}}^\infty(D\times D\times\hat U,T^{*0,q}X\boxtimes(T^{*0,q}X)^*)$. If $b$ vanishes to infinite order at $x=y$, then 
\[A_z\equiv0\mod\hat{\mathcal{C}}^\infty.\]
\end{lem}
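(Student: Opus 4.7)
The plan is to recognize, via the formal identity $\int_0^\infty e^{i\lambda t}t^k\,dt = k!\,(-i\lambda)^{-k-1}$ (valid for $k\in\mathbb{N}_0$ and $\mathrm{Im}\,\lambda>0$, extending distributionally via the $+i0$ prescription along $\{\lambda=0\}$), that $A_z(x,y)$ equals, modulo smooth contributions in $(x,y)$, a constant multiple of the quotient $b(x,y,z)/\varphi_-(x,y)^{k+1}$, and then to deduce that this quotient defines an element of $\hat{\mathcal{C}}^\infty(D\times D\times\hat U,T^{*0,q}X\boxtimes(T^{*0,q}X)^*)$ from the infinite-order vanishing of $b$ at the diagonal. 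More precisely, I would first insert a regularizing damping $e^{-\epsilon t}$ ($\epsilon>0$) and integrate by parts in $t$ exactly $k+1$ times using $e^{(i\varphi_--\epsilon)t}=(i\varphi_--\epsilon)^{-1}\partial_t(e^{(i\varphi_--\epsilon)t})$; when $k\in\mathbb{Z}_{<0}$, apply the reverse move $t^k=(k+1)^{-1}\partial_t(t^{k+1})$ first (for $k\neq -1$) to raise the power of $t$ into $\mathbb{N}_0$. The boundary terms at $t=\infty$ vanish by the $\epsilon$-damping, those at $t=0$ produce only smooth contributions in $(x,y)$, and letting $\epsilon\to 0^+$ yields $A_z(x,y)=c_k\,b(x,y,z)/(-i\varphi_-(x,y)+i0)^{k+1}$ modulo a smooth remainder.

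For the smoothness and $\hat{\mathcal{C}}^\infty$ bounds on $b/\varphi_-^{k+1}$, Theorem~\ref{t-gue161110g} supplies the crucial lower bound $|\varphi_-(x,y)|\gtrsim|x_{2n+1}-y_{2n+1}|+|x-y|_\perp^2\gtrsim|x-y|^2$ in a neighborhood of the diagonal, where $|x-y|_\perp^2:=\sum_{j=1}^{2n}|x_j-y_j|^2$. Since $b$ vanishes to infinite order at $\{x=y\}$, for every $N$ one has $|b(x,y,z)|\leq C_N|x-y|^N|\mathrm{Im}\,z|^{-N'(N)}$ on compact sets, and the same type of estimate holds for every $(x,y)$-derivative of $b$ (derivatives of flat functions are flat). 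Hence on any compact $K\Subset D\times D$,
\[\big|b(x,y,z)\big|/|\varphi_-(x,y)|^{k+1}\leq C_{N,K}\,|x-y|^{N-2(k+1)}\,|\mathrm{Im}\,z|^{-N'}\]
is uniformly bounded for $N\geq 2(k+1)$, and analogous estimates with $N$ larger handle derivatives $\partial_x^\alpha\partial_y^\beta$.

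The hard part of the proof will be upgrading this boundedness of $b/\varphi_-^{k+1}$ to genuine smoothness across the diagonal, and verifying that the $|\mathrm{Im}\,z|^{-N(\alpha,\beta)}$ estimates propagate correctly through differentiation. This rests on a Hadamard-type division lemma adapted to the anisotropic vanishing of $\varphi_-$ (linear in the Reeb direction $x_{2n+1}$, quadratic in the $2n$ horizontal directions). I would implement this by Taylor-expanding $b=\sum_{|\alpha|=N}(x-y)^\alpha b_{N,\alpha}$ with $b_{N,\alpha}\in\hat{\mathcal{C}}^\infty$ (for arbitrarily large $N$, by the infinite-order vanishing hypothesis) and iteratively matching each factor in $(x-y)^\alpha$ against a power of $\varphi_-$ using the explicit Hessian data of Theorem~\ref{t-gue161110g}: $\partial_{\overline{w}_j}\varphi_-=-i|\mu_j|(z_j-w_j)+O(|(x,y)|)$ in the horizontal directions, and $\varphi_-=-(x_{2n+1}-y_{2n+1})+O(|(x,y)|^2)$ in the Reeb direction. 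Since $N$ may be taken arbitrarily large, the iteration can be carried out as many times as needed to certify smoothness of any prescribed derivative order on the quotient, which is precisely what makes the infinite-order vanishing of $b$ essential for the bookkeeping to close up.
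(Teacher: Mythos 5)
The paper omits this proof as ``straightforward,'' and your overall strategy --- evaluate $\int_0^\infty e^{i\varphi_- t}t^k\,dt$ to get $c_k\,b/(-i\varphi_-)^{k+1}$ modulo $\hat{\mathcal{C}}^\infty$ terms, then combine the flatness of $b$ at the diagonal with the lower bound $\abs{\varphi_-}\gtrsim\abs{x-y}^2$ --- is exactly the intended argument, and your reduction is sound (including the observation that the $\abs{{\rm Im\,}z}^{-N}$ losses propagate through Taylor expansion of $b$, and that the $t=0$ issues for $k<0$ are absorbed into $\hat{\mathcal{C}}^\infty$ by the usual cutoff convention).

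The one place where you make the argument harder and shakier than it needs to be is the final step. What you label ``the hard part'' --- a Hadamard-type division lemma obtained by Taylor-expanding $b=\sum_{\abs{\alpha}=N}(x-y)^\alpha b_{N,\alpha}$ and ``matching each factor of $(x-y)^\alpha$ against a power of $\varphi_-$'' --- does not work term by term: quotients such as $(x_1-y_1)^2/\varphi_-$ are bounded but \emph{not} smooth across the diagonal (differentiating in the Reeb direction produces $(x_1-y_1)^2/\varphi_-^2$ up to smooth factors, which blows up), so no literal factor-by-factor cancellation is available. Fortunately you do not need it: the estimates you already wrote down finish the proof. By the quotient rule, every derivative $\pr_x^\alpha\pr_y^\beta\bigl(b/\varphi_-^{k+1}\bigr)$ is a finite sum of terms $(\pr^{\alpha'}\pr^{\beta'}b)\cdot(\text{derivatives of }\varphi_-)\cdot\varphi_-^{-(k+1+j)}$ with $j\leq\abs{\alpha}+\abs{\beta}$; each numerator derivative of $b$ is again flat at the diagonal, so each term is $O(\abs{x-y}^{N-2(k+1+j)}\abs{{\rm Im\,}z}^{-N'})$ for every $N$ and hence extends continuously by $0$ across $\set{x=y}$. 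A function that is smooth off a closed submanifold and all of whose derivatives extend continuously to it is smooth, so $b/\varphi_-^{k+1}$, extended by $0$, lies in $\hat{\mathcal{C}}^\infty$ with the required $N(\alpha,\beta)$. A second, cosmetic point: your intermediate anisotropic claim $\abs{\varphi_-}\gtrsim\abs{x_{2n+1}-y_{2n+1}}+\abs{x-y}_\perp^2$ is not justified by \eqref{e-gue140205VI}--\eqref{e-gue140205VII} (the error terms $O(\abs{(x,y)}\cdot\abs{x-y}_\perp)$ in ${\rm Re\,}\varphi_-$ can swamp $x_{2n+1}-y_{2n+1}$ when the latter is very small), but the weaker bound $\abs{\varphi_-}\gtrsim\abs{x-y}^2$ that you actually use does follow from the case split $\abs{x_{2n+1}-y_{2n+1}}\leq\abs{x-y}_\perp$ versus $>\abs{x-y}_\perp$, so nothing is lost.
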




From Lemma~\ref{l-gue211126yyd}, we can repeat  the proof of  Lemma $3.1$ in \cite{gh} and get 

\begin{lem}\label{l-gue211126ycdc}
Let $B_z\in\Psi_{k,z}(X)$ with $S^{(q)}B_z\equiv B_z\mod\hat{\mathcal C}^\infty$,  $B_zS^{(q)}\equiv B_z\mod\hat{\mathcal C}^\infty$. If	 
	\[\pi_{x,n_-}\sigma^0_{B_z,-}(x,x)\pi_{x,n_-}=0
	\text{ and } 
	\pi_{x,n_+}\sigma^0_{B_z,+}(x,x)\pi_{x,n_+}=0\text{ if }q=n_-=n_+\,,\]
	for every $x\in X$, then $B_z\in\Psi_{k-1,z}(X)$. 
\end{lem}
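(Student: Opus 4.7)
The strategy is to adapt the proof of Lemma~3.1 of \cite{gh} to the $z$-dependent setting of Definition~\ref{typez}, tracking all seminorm estimates uniformly in $z$ up to losses of powers of $|\mathrm{Im}\,z|^{-1}$.

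First, I would localize to a coordinate patch $D\subset X$ and apply Definition~\ref{typez} to write
$$B_z(x,y) \equiv B_{z,-}(x,y) + B_{z,+}(x,y), \qquad B_{z,\mp}(x,y)\equiv\int_0^\infty e^{i\varphi_\mp(x,y)t}\,a_\mp(x,y,t,z)\,dt$$
on $D$ modulo $\hat{\mathcal C}^\infty$, with $a_\mp\in S^{k}_{z,\mathrm{cl}}$ and $a_+=0$ whenever $q\neq n_+$. Using the decomposition $S^{(q)}=S_-+S_+$ together with the disjointness of $\mathrm{WF}'(S_-)$ and $\mathrm{WF}'(S_+)$ (so that $S_-B_{z,+}\equiv 0$ and $S_+B_{z,-}\equiv 0$ mod $\hat{\mathcal C}^\infty$), and combining with the hypothesis $S^{(q)}B_z\equiv B_z\equiv B_zS^{(q)}$ mod $\hat{\mathcal C}^\infty$, I would check that $B_{z,\mp}\equiv S_\mp B_{z,\mp}S_\mp$ mod $\hat{\mathcal C}^\infty$ on $D$.

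Second, I would apply the composition formula for Szeg\H{o}-type Fourier integral operators sharing the common phase $\varphi_\mp$, developed in \cite{hsiao} and \cite{gh}, to the right-hand side. The composition preserves the phase, and the leading term of its classical expansion on the diagonal is an explicit non-zero scalar multiple of
$$s^0_\mp(x,x)\circ\sigma^0_{B_z,\mp}(x,x)\circ s^0_\mp(x,x),$$
which by Theorem~\ref{t-gue140205III} is proportional to $\pi_{x,n_\mp}\,\sigma^0_{B_z,\mp}(x,x)\,\pi_{x,n_\mp}$. The vanishing hypothesis then forces the leading coefficient $a_\mp^0(x,y,z)$ of $B_{z,\mp}$ to vanish identically on the diagonal $x=y$ in $D$.

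Third, with $a_\mp^0$ vanishing on the diagonal, I would invoke the standard Szeg\H{o} FIO calculus argument (cf.\ the end of the proof of Lemma~3.1 of \cite{gh}): Taylor expansion yields $a_\mp^0(x,y,z)=\sum_j(x_j-y_j)\,b_j(x,y,z)$ modulo an amplitude vanishing to second order on the diagonal, and the $(x_j-y_j)$ factors are absorbed into amplitudes of order $k-1$ via the usual transport-type manipulation using the phase $\varphi_\mp$ described in Theorem~\ref{t-gue161110g}, while amplitudes vanishing to infinite order on the diagonal are $z$-smoothing by Lemma~\ref{l-gue211126yyd}. Iterating, one obtains new amplitudes $\tilde a_\mp\in S^{k-1}_{z,\mathrm{cl}}$ representing $B_{z,\mp}$ modulo $\hat{\mathcal C}^\infty$, hence $B_z\in\Psi_{k-1,z}(X)$.

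The main obstacle is the second step: one must verify that the complex stationary phase composition formula for Szeg\H{o}-type FIOs extends to the $z$-dependent symbol classes $S^{m}_{z,\mathrm{cl}}$, that the expansion remains classical in the sense of Definition~\ref{d-gue211126yyda}, and that the diagonal leading term takes the claimed sandwich form $\pi_{x,n_\mp}(\cdot)\pi_{x,n_\mp}$. In the $z$-independent case this is exactly Lemma~3.1 of \cite{gh}; the modifications here amount to carrying the $|\mathrm{Im}\,z|^{-N}$ losses through each step of the stationary phase argument, which is routine but requires care.
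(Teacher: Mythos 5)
Your proposal follows essentially the same route as the paper, which itself only states that one combines Lemma~\ref{l-gue211126yyd} with a repetition of the proof of Lemma~3.1 in \cite{gh}; your three steps (reduction to $B_{z,\mp}\equiv S_\mp B_{z,\mp}S_\mp$, identification of the diagonal leading term as $\pi_{x,n_\mp}\sigma^0_{B_z,\mp}\pi_{x,n_\mp}$ via Theorem~\ref{t-gue140205III}, and the order-reduction by Taylor expansion plus Lemma~\ref{l-gue211126yyd}) are exactly the intended argument, carried through the $z$-dependent symbol classes with the $\abs{{\rm Im\,}z}^{-N}$ losses. This is correct.
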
 

We need 

\begin{lem}\label{l-gue211205yyd}
Let $A_z\in\Psi_{n-1,z}(X)$.There exist $N\in\mathbb N$ and $c>0$ such that
\[\norm{A_zu}\leq\frac{c}{\abs{{\rm Im\,}z}^N}\norm{u},\]
for every $z\in\hat U$ and every $u\in\Omega^{0,q}(X)$. 
\end{lem}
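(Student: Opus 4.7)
The plan is to reduce to a local estimate via a partition of unity, derive a pointwise kernel bound for $A_z$ from its oscillatory integral representation, and conclude by Schur's test, tracking the polynomial dependence on $|\mathrm{Im}\,z|^{-1}$ throughout.

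First I would cover $X$ by finitely many coordinate patches $\{D_\alpha\}$ over which $T^{*0,q}X$ trivializes, fix a smooth partition of unity $\{\chi_\alpha\}$ subordinate to this cover, and decompose $A_z=\sum_{\alpha,\beta}\chi_\alpha A_z\chi_\beta$. For pairs with $\mathrm{supp}\,\chi_\alpha\cap\mathrm{supp}\,\chi_\beta=\emptyset$, Definition~\ref{typez} together with $A_z\in\Psi_{n-1,z}(X)$ forces $\chi_\alpha A_z\chi_\beta\equiv 0\mod\hat{\mathcal C}^\infty$, so by Definition~\ref{d-gue140221b} its kernel is pointwise bounded by $C/|\mathrm{Im}\,z|^{N}$, and Schur's test immediately yields the desired $L^2\to L^2$ bound for this piece. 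It therefore suffices to estimate the remaining diagonal pieces, each supported in a single coordinate patch $D$.

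On $D$, modulo $\hat{\mathcal C}^\infty$, the kernel of $\chi_\alpha A_z\chi_\beta$ is the sum of two oscillatory integrals
\[\int_0^\infty e^{i\varphi_-(x,y)t}\tilde a_-(x,y,t,z)\,dt+\int_0^\infty e^{i\varphi_+(x,y)t}\tilde a_+(x,y,t,z)\,dt,\]
with $\tilde a_\mp\in S^{n-1}_{z,\mathrm{cl}}$ compactly supported and the $+$ term present only when $q=n_+$. After introducing a smooth $t$-cutoff absorbing the small-$t$ contribution (which produces a smooth-in-$(x,y)$ remainder controlled polynomially in $|\mathrm{Im}\,z|^{-1}$), I would iterate the identity $e^{i\varphi_\mp t}=(i\varphi_\mp)^{-1}\partial_t e^{i\varphi_\mp t}$ exactly $n$ times, picking up $|\varphi_\mp|^{-n}$ together with $\partial_t^n$ hitting $\tilde a_\mp$. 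Invoking Definition~\ref{d-gue140221a} to bound the finitely many $\partial_t^k\tilde a_\mp$ by $(C/|\mathrm{Im}\,z|^{N_0})(1+t)^{n-1-k}$ and combining with $|e^{i\varphi_\mp t}|=e^{-t\,\mathrm{Im}\,\varphi_\mp}\leq 1$, the resulting $t$-integral converges absolutely and yields the pointwise estimate
\[|A_z(x,y)|\leq \frac{C}{|\mathrm{Im}\,z|^{N}}\,|\varphi_\mp(x,y)|^{-n}\]
near the diagonal, with superpolynomial decay away from it obtained by further integrations by parts.

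To close the argument, Theorem~\ref{t-gue161110g} together with \eqref{e-gue140205VI} and \eqref{e-gue140205VII} gives $|\varphi_\mp(x,y)|^2\geq c\bigl(|x'-y'|^4+|x_{2n+1}-y_{2n+1}|^2\bigr)$ with $x'=(x_1,\ldots,x_{2n})$, so $|\varphi_\mp|^n\geq c\,\rho(x,y)^{2n}$ where $\rho(x,y):=\max\bigl(|x'-y'|,|x_{2n+1}-y_{2n+1}|^{1/2}\bigr)$ is the Heisenberg pseudometric of homogeneous dimension $2n+2$. Since the volume of a Heisenberg ball of radius $R$ scales as $R^{2n+2}$, a direct computation yields $\sup_{x\in D}\int_D\rho(x,y)^{-2n}\,dv(y)<\infty$, and similarly with $x,y$ exchanged. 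Schur's test then delivers $\|\chi_\alpha A_z\chi_\beta\|_{L^2\to L^2}\leq C/|\mathrm{Im}\,z|^N$, and summing over the finite cover proves the bound. The main technical obstacle is the handling of the small-$t$ boundary contribution in the integration by parts, since the symbol estimates in Definition~\ref{d-gue140221a} are guaranteed only for $t\geq 1$; this is precisely the step where the cutoff absorbs those boundary terms into the smoothing remainder.
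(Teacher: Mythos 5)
Your route is genuinely different from the paper's. The paper never estimates the kernel of $A_z$ itself: it uses the iteration $\norm{A_zu}^2=(A_z^*A_zu\,|\,u)\leq\norm{(A_z^*A_z)^{2^k}u}^{1/2^k}\norm{u}^{2-1/2^k}$, notes via the complex stationary phase (composition) calculus that $(A_z^*A_z)^{2^k}\in\Psi_{n-2^{k+1},z}(X)$, and takes $k$ so large that this operator has a continuous, polynomially-in-$\abs{{\rm Im\,}z}^{-1}$ bounded kernel, where the $L^2$ bound is immediate. That argument buys you the estimate without any fine analysis of the phase near the diagonal, at the cost of invoking the composition calculus for the classes $\Psi_{k,z}$. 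Your approach avoids composition entirely and instead proves a sharp pointwise kernel bound plus Schur's test with the Heisenberg pseudometric; this is more self-contained in spirit and correctly exploits that the order is $n-1$ (for order $n$ the bound $\abs{\varphi_\mp}^{-n-1}\sim\rho^{-2n-2}$ would just fail the Schur integral in homogeneous dimension $2n+2$).

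However, as written there is a concrete gap in the middle step. With $\tilde a_\mp\in S^{n-1}_{z,{\rm cl}}$ you have $\abs{\pr_t^{n}\tilde a_\mp}\lesssim\abs{{\rm Im\,}z}^{-N}(1+t)^{-1}$, so after exactly $n$ integrations by parts the $t$-integral does \emph{not} converge absolutely (and ${\rm Im\,}\varphi_\mp$ vanishes on a nontrivial set off the diagonal, so $\abs{e^{i\varphi_\mp t}}\leq 1$ gives no decay there); while after $n+1$ integrations by parts you get absolute convergence but only the bound $\abs{\varphi_\mp}^{-(n+1)}$, which is exactly too singular for your Schur computation. The correct way to recover $\abs{\varphi_\mp(x,y)}^{-n}$ is the standard scaling argument: split the $t$-integral at $t\sim\abs{\varphi_\mp(x,y)}^{-1}$ (or decompose dyadically in $t$), bound the low-$t$ piece directly by $\int_0^{C/\abs{\varphi_\mp}}(1+t)^{n-1}\,\mathrm{d}t\lesssim\abs{\varphi_\mp}^{-n}$, and integrate by parts $n+1$ times only on the high-$t$ piece, where the cutoff keeps the boundary and remainder terms of the same size. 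Your $t$-cutoff at $t\sim1$ (to handle the fact that the symbol estimates hold only for $t\geq1$) does not perform this function. With that repair, and a justification that $\abs{\varphi_\mp(x,y)}\gtrsim\abs{x'-y'}^2+\abs{x_{2n+1}-y_{2n+1}}$ near the diagonal (which needs the full statement of Theorem~\ref{t-gue161110g}, not just \eqref{e-gue140205VI}), your proof goes through.
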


\begin{proof}
Let $A^*_z: \Omega^{0,q}(X)\To\Omega^{0,q}(X)$ be the adjoint of $A_z$ with respect to $(\,\cdot\,|\,\cdot\,)$. For every $u\in\Omega^{0,q}(X)$, we have 
\begin{equation}\label{e-gue211205yyd}
\begin{split}
&\norm{A_zu}^2=(\,A^*_zA_zu\,|\,u\,)\leq\norm{A^*_zA_zu}\norm{u}\\
&\leq\norm{(A^*_zA_z)^2u}^{\frac{1}{2}}\norm{u}^{\frac{3}{2}}\leq\cdots\leq
\norm{(A^*_zA_z)^{2^k}u}^{\frac{1}{2^k}}\norm{u}^{2-\frac{1}{2^k}},
\end{split}
\end{equation}
for every $k\in\mathbb N$ and every $z\in\hat U$. From complex stationary phase formula, we see that 
\[(A^*_zA_z)^{2^k}\in\Psi_{n-2^{k+1},z}(X).\]
We take $k\gg1$ so that 
\begin{equation}\label{e-gue211205yydI}
\norm{(A^*_zA_z)^{2^k}v}\leq\frac{c}{\abs{{\rm Im\,}z}^{N_1}}\norm{v},
\end{equation}
for every $z\in\hat U$ and every $v\in\Omega^{0,q}(X)$, where $c>0$ and $N_1\in\mathbb N$ are constants. From \eqref{e-gue211205yyd} 
and \eqref{e-gue211205yydI}, the lemma follows. 
\end{proof}

\begin{lem}\label{l-gue211205yydI}
Let $A_z\in\Psi_{n,z}(X)$. For every $s\in\mathbb Z$, there exist $N_s\in\mathbb N$ and $c>0$ such that
\begin{equation}\label{e-gue211205yydII}
\norm{A_zu}_{s-1}\leq\frac{c}{\abs{{\rm Im\,}z}^{N_s}}\norm{u}_s,
\end{equation}
for every $z\in\hat U$ and every $u\in\Omega^{0,q}(X)$, where $\norm{\cdot}_s$ denotes the Sobolev norm of order $s$ on $X$.  

Thus, $A_z$ can be extended to a continuous operator: 
\[A_z: H^s(X,T^{*0,q}X)\To H^{s-1}(X,T^{*0,q}X)\]
and we have the estimates \eqref{e-gue211205yydII}, for every $s\in\mathbb Z$, every $z\in\hat U$  and every $u\in H^s(X)$.
\end{lem}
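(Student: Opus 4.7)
The strategy is to conjugate $A_z$ by powers of an elliptic classical pseudodifferential operator so as to reduce the claim to the $L^2$-bound already established in Lemma~\ref{l-gue211205yyd}.

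First, fix any positive, invertible, elliptic classical pseudodifferential operator $\Lambda$ of order one acting on sections of $T^{*0,q}X$, so that $\Lambda^r$ is well-defined in $L^r_{\rm cl}(X, T^{*0,q}X\boxtimes(T^{*0,q}X)^*)$ for every $r \in \mathbb{Z}$ via Seeley's complex powers, and the norm $\norm{\Lambda^r u}_0$ is equivalent to $\norm{u}_r$ for each $r$. The crucial ingredient is the parametric composition fact: if $P \in L^r_{\rm cl}(X, T^{*0,q}X\boxtimes(T^{*0,q}X)^*)$ and $B_z \in \Psi_{k,z}(X)$, then $PB_z, B_zP \in \Psi_{k+r, z}(X)$. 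To see this, I would insert the local oscillatory representation from Definition~\ref{typez} into the composition and carry out the standard complex stationary phase expansion in the transversal variables. The new amplitude is smooth in $(x,y)$, has the same phase $\varphi_{\mp}$, and its leading term in $t$ is $\sigma^0_P(x, \mp\omega_0(x))\,t^r$ times the original leading amplitude of $B_z$, so the order in $t$ shifts by exactly $r$. Since the $z$-dependence enters only through the amplitude and each term of the asymptotic expansion is built from derivatives of the amplitude multiplied by universal smooth factors, the admissibility estimates of Definition~\ref{d-gue140221a} propagate with only a polynomial worsening in $\abs{{\rm Im\,}z}^{-1}$, so the resulting symbol belongs to $S^{k+r}_{z,{\rm cl}}$.

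Granting this, set $\widetilde A_z := \Lambda^{s-1}\,A_z\,\Lambda^{-s}$; two applications of the composition fact give $\widetilde A_z \in \Psi_{n+(s-1)-s,\,z}(X) = \Psi_{n-1,\,z}(X)$. Lemma~\ref{l-gue211205yyd} applied to $\widetilde A_z$ then furnishes $N_s \in \mathbb{N}$ and $c > 0$ such that
\[
\norm{\widetilde A_z v} \leq \frac{c}{\abs{{\rm Im\,}z}^{N_s}}\,\norm{v}
\]
for every $v \in \Omega^{0,q}(X)$ and every $z \in \hat U$. Substituting $v = \Lambda^s u$ and invoking the equivalences $\norm{\Lambda^{s-1} w}_0 \simeq \norm{w}_{s-1}$ and $\norm{\Lambda^s u}_0 \simeq \norm{u}_s$ immediately yields the estimate $\norm{A_z u}_{s-1} \leq c'\,\abs{{\rm Im\,}z}^{-N_s}\,\norm{u}_s$ for all $u \in \Omega^{0,q}(X)$. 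The extension to $H^s$ is then automatic: $\Omega^{0,q}(X)$ is dense in $H^s(X, T^{*0,q}X)$ and the above estimate is uniform in $u$ on this dense subspace, so $A_z$ extends uniquely by continuity to a bounded operator $H^s(X, T^{*0,q}X) \to H^{s-1}(X, T^{*0,q}X)$ satisfying the same bound.

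The real obstacle is the parametric composition fact. The non-parametric version is a staple of the symbolic calculus used in~\cite{hsiao} and~\cite{gh}; the additional work here is to track the $\abs{{\rm Im\,}z}^{-N}$ losses through the stationary phase expansion and verify that they assemble into the admissibility class $S^{\bullet}_{z,{\rm cl}}$ of Definition~\ref{d-gue140221a}. Once this bookkeeping is in place, the Sobolev mapping estimate falls out by a formal sandwiching argument as above.
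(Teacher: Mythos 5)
Your proposal is correct and follows essentially the same route as the paper: conjugate $A_z$ by elliptic classical pseudodifferential operators of orders $s-1$ and $-s$ to land in $\Psi_{n-1,z}(X)$ via the composition/stationary-phase calculus, then invoke Lemma~\ref{l-gue211205yyd} and the equivalence of Sobolev norms. The only cosmetic difference is that you use powers of a single first-order operator where the paper picks an elliptic operator $\Lambda_s$ of each order $s$ directly.
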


\begin{proof}
For every $s\in\mathbb Z$, let $\Lambda_s\in L^s_{{\rm cl\,}}(X,T^{*0,q}X\boxtimes (T^{*0,q}X)^*)$
be a classical elliptic
pseudodifferential operator on $X$ of order $s$ from sections of
$T^{*0,q}X$ to sections of $T^{*0,q}X$. From complex stationary phase formula, we see that 
\[\Lambda_{s-1}A_z\Lambda_{-s}\in\Psi_{n-1,z}(X).\]
From this observation and Lemma~\ref{l-gue211205yyd}, the lemma follows. 
\end{proof}

Let $P\in L^{\ell}_{{\rm cl\,}}(X,T^{*0,q}X\boxtimes(T^{*0,q}X)^*)$ with scalar principal symbol, $\ell\leq0$, $\ell\in\mathbb Z$. We recall that a Toeplitz operator is given by \begin{equation}\label{e-gue211126ycdg}
T^{(q)}_P:=S^{(q)}\circ P\circ S^{(q)}: L^2(X)\To{\rm Ker\,}\Box^q_b. 
\end{equation}
Let $f\in\mathcal{C}^\infty(X)$ and let $M_f$ denote the operator given by the multiplication $f$. When $P=M_f$, we write $T^{(q)}_f:=T^{(q)}_P$.

The following follows from the standard calculus of Fourier integral operator of complex type (see the calculation after Theorem $3.4$ in \cite{gh}). 

\begin{thm}\label{t-gue210701yyd}
	Let $P\in L^{\ell}_{{\rm cl\,}}(X,T^{*0,q}X\boxtimes(T^{*0,q}X)^*)$ with scalar principal symbol, $\ell\leq0$, $\ell\in\mathbb Z$. We have $T^{(q)}_P\in\Psi_{n+\ell}(X)$ and 
	\[\mbox{$\sigma^0_{T^{(q)}_{P,-}}(x,x)=\frac{1}{2}\pi^{-n-1}\abs{{\rm det\,}\mathcal L_x}\sigma^0_P(x,-\omega_0(x))\pi_{x,n_-}$, for every $x\in X$}.\]
	If $q=n_-=n_+$, then
	\[\mbox{$\sigma^0_{T^{(q)}_{P,+}}(x,x)=\frac{1}{2}\pi^{-n-1}\abs{{\rm det\,}\mathcal L_x}\sigma^0_P(x,\omega_0(x))\pi_{x,n_+}$, for every $x\in X$}.\]
\end{thm}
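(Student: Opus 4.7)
My plan is to decompose the Toeplitz kernel via $S^{(q)}\equiv S_-+S_+$ from Theorem~\ref{t-gue161109I} and to reduce the statement to the composition calculus for complex Fourier integral operators of Szeg\H{o} type developed in \cite{gh}. Expanding
\[
T^{(q)}_P = S_- P S_- + S_- P S_+ + S_+ P S_- + S_+ P S_+,
\]
I would first observe that the two cross terms are smoothing. Since $P$ is pseudodifferential, $\mathrm{WF}'(PS_{\pm})\subset\mathrm{WF}'(S_{\pm})=\mathrm{diag\,}(\Sigma^{\pm}\times\Sigma^{\pm})$, so left-composition with $S_{\mp}$ forces the resulting wave front set into $\mathrm{diag\,}(\Sigma^{\mp}\times\Sigma^{\pm})=\emptyset$ because $\Sigma^-\cap\Sigma^+=\emptyset$. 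Moreover $S_+\equiv 0$ unless $q=n_+$, so only $S_- P S_-$ (and, under $q=n_-=n_+$, also $S_+ P S_+$) can contribute nontrivially.

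For the diagonal term $S_- P S_-$, I would work in a coordinate patch $D$ and use the oscillatory representation of Theorem~\ref{t-gue161109I},
\[
S_-(x,y)\equiv\int_0^\infty e^{i\varphi_-(x,y)t}\,s_-(x,y,t)\,dt,\qquad s_-\sim\sum_{j\geq 0} s^j_-(x,y)\,t^{n-j}.
\]
Right-composition with $P$ is treated by the standard reduction: writing $P$ as an oscillatory integral in a dual variable $\eta$, substituting $\eta=t\,d_x\varphi_-+\xi$, and performing stationary phase in $\xi$ produces an amplitude in $S^{n+\ell}_{\mathrm{cl}}$ whose leading coefficient on the diagonal is $\sigma^0_P\bigl(x,t\,d_x\varphi_-(x,y)\bigr)\,s^0_-(x,y)\,t^{\ell}$. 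Since $d_x\varphi_-(x,y)\big|_{x=y}=-\omega_0(x)$ by Theorem~\ref{t-gue161109I} and $\sigma^0_P$ is homogeneous of degree $\ell$, this leading term reads $\sigma^0_P(x,-\omega_0(x))\,s^0_-(x,x)\,t^{n+\ell}$ modulo lower order. Left-composition with $S_-$ then preserves membership in $\Psi_{n+\ell}(X)$ by the composition calculus of Szeg\H{o} type in \cite{gh}; the underlying reason that no order is lost is the idempotency $S_-\circ S_-\equiv S_-$.

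To extract the leading symbol on the diagonal I would invoke complex stationary phase on the triple composition. The phase $\varphi_-(x,z)t+\varphi_-(z,y)s$ has its critical point on the diagonal $x=y=z$, and the two $s_-$ factors combine with the Hessian factor of the complex phase into a single diagonal value $s^0_-(x,x)$; by Theorem~\ref{t-gue140205III} this equals $\tfrac12\pi^{-n-1}|\det\mathcal L_x|\,\pi_{x,n_-}$. Since $\sigma^0_P$ is scalar it commutes with $\pi_{x,n_-}$, yielding the claimed formula. The case $q=n_-=n_+$ for $S_+ P S_+$ is strictly analogous, with $\omega_0(x)$ in place of $-\omega_0(x)$ and $\pi_{x,n_+}$ in place of $\pi_{x,n_-}$. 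The main obstacle is precisely this last bookkeeping: verifying that the complex Hessian determinant exactly cancels the second copy of $\tfrac12\pi^{-n-1}|\det\mathcal L_x|$ that naively appears from two factors of $s^0_-(x,x)$. That cancellation is forced by $S_-\circ S_-\equiv S_-$ modulo lower order and constitutes the explicit content of the computation following Theorem~3.4 in \cite{gh}; I would invoke it directly rather than reproduce it.
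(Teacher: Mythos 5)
Your proposal is correct and follows essentially the same route as the paper, which simply cites ``the standard calculus of Fourier integral operators of complex type (see the calculation after Theorem 3.4 in \cite{gh})''; your decomposition $S^{(q)}=S_-+S_+$, the wave-front-set argument killing the cross terms, the stationary-phase reduction of $P\circ S_{\mp}$ using $d_x\varphi_{\mp}(x,y)|_{x=y}=\mp\omega_0(x)$ and the homogeneity of $\sigma^0_P$, and the appeal to $S_{\mp}\circ S_{\mp}\equiv S_{\mp}$ to fix the normalization of the Hessian factor are exactly the content of that calculation. You correctly identify the only delicate point (the cancellation of the second copy of $\tfrac12\pi^{-n-1}\abs{\det\mathcal L_x}$ in the triple composition) and defer it to \cite{gh}, just as the paper does.
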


\subsubsection{The spectral projections and functional calculus for Toeplitz operators}
\label{sec:specproj}

Given a self-adjoint operator 
\[A: {\rm Dom\,} A\subset H\To H\] 
where $H$ is a Hilbert space, let $\mathrm{Spec}(A)$ denote the spectrum of $A$.  Let $\tau\in\mathcal{C}^\infty_c(\mathbb R)$ and let $\tau(A)$ denote the functional calculus of $A$. Let 
\begin{equation}\label{e-gue211201yyd}
E_{\tau}(A)={\rm Range\,}(\tau(A))\subset H.
\end{equation}
Let $I\subset\mathbb R$ be an interval. 
Consider an increasing sequence of non-negative test functions $\chi_n\,:\,\mathbb{R}\rightarrow \mathbb{R}$ such that $\mathrm{supp}(\chi_n)\Subset I$ converging point-wise to the characteristic function on $I$. By Theorem $2.5.5$ in \cite{d} the sequence of operators $\chi_n(A)$ converges strongly to a canonically determined projection $\Pi_{I}(A)$, it is called {\em spectral projection} for $I$, its range space is denoted by $E_{I}(A)$.

Let $P\in L^{0}_{{\rm cl\,}}(X,T^{*0,q}X\boxtimes(T^{*0,q}X)^*)$ be a self-adjoint classical pseudodifferential operator on $X$ with scalar principal symbol. Then, it is not difficult to see that the Toeplitz operator $T^{(q)}_P$ is self-adjoint. In this section, we will show that the functional calculus for Toeplitz operators is a complex Fourier integral operators of Szeg\H{o} type of order $0$. 

\begin{lem}\label{l-gue211126yydq} 
Suppose that $\sigma^0_P(x,\mp\omega_0(x))\neq0$ for every $x\in X$.
We have $\mathrm{Spec}(T^{(q)}_P)\subset I\cup\set{0}$, for some open interval $I$ with $0\notin\overline I$.
\end{lem}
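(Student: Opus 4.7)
The plan is to prove that $0$ is isolated in $\mathrm{Spec}(T^{(q)}_P)$ by constructing a two-sided parametrix for $T^{(q)}_P$ inside the calculus $\Psi_\bullet(X)$ of complex Fourier integral operators of Szeg\H{o} type and then running a Fredholm-type argument against the smoothing error. Since $T^{(q)}_P=S^{(q)}PS^{(q)}$ vanishes on $(\mathrm{Range}\,S^{(q)})^\perp$, one has $(\mathrm{Range}\,S^{(q)})^\perp\subseteq\mathrm{Ker}(T^{(q)}_P)$, so $V:=\mathrm{Ker}(T^{(q)}_P)^\perp\subseteq\mathrm{Range}\,S^{(q)}$, and the existence of a spectral gap reduces to a uniform inequality $\norm{T^{(q)}_Pu}\geq c\norm{u}$ on $V$ for some $c>0$.

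For the parametrix, compactness of $X$ promotes the non-vanishing hypothesis to the uniform lower bound $\abs{\sigma^0_P(x,\pm\omega_0(x))}\geq\delta$ for some $\delta>0$. I would fix a self-adjoint $R\in L^0_{\rm cl\,}(X,T^{*0,q}X\boxtimes(T^{*0,q}X)^*)$ whose scalar principal symbol is reciprocal to $\sigma^0_P$ in a conic neighbourhood of $\Sigma^-\cup\Sigma^+$. Theorem~\ref{t-gue210701yyd} puts $T^{(q)}_R\in\Psi_n(X)$, and inspection of the diagonal leading symbols given by the same theorem shows that the $\pi_{x,n_\mp}$-components of the leading diagonal symbol of $T^{(q)}_RT^{(q)}_P-S^{(q)}$ vanish. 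The $z$-independent analogue of Lemma~\ref{l-gue211126ycdc} (Lemma~$3.1$ of \cite{gh}) upgrades this to $T^{(q)}_RT^{(q)}_P-S^{(q)}\in\Psi_{n-1}(X)$. Iterating with Toeplitz corrections of successively lower symbolic order and taking an asymptotic sum in $\Psi_n(X)$ yields $Q\in\Psi_n(X)$ with
\[QT^{(q)}_P\equiv T^{(q)}_PQ\equiv S^{(q)}\mod \Psi_{-\infty}(X),\]
and $\Psi_{-\infty}(X)$ lies inside the smoothing ideal; hence $K:=S^{(q)}-QT^{(q)}_P$ is compact on $L^2_{(0,q)}(X)$, while $Q$ is itself $L^2$-bounded since each correction is a Toeplitz operator built from a pseudodifferential operator of order $0$.

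To close, assume for contradiction that no such lower bound on $V$ exists: pick $u_k\in V\subseteq\mathrm{Range}\,S^{(q)}$ with $\norm{u_k}=1$ and $T^{(q)}_Pu_k\to 0$, pass to a weakly convergent subsequence $u_k\rightharpoonup v\in V$, and use the identity $u_k=S^{(q)}u_k=QT^{(q)}_Pu_k+Ku_k$. The first summand converges strongly to $0$ by $L^2$-boundedness of $Q$, and the second along a subsequence by compactness of $K$, so $u_k\to v$ strongly with $\norm{v}=1$ and $T^{(q)}_Pv=0$, forcing $v\in V\cap\mathrm{Ker}(T^{(q)}_P)=\set{0}$, a contradiction. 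This delivers the spectral gap, and since $T^{(q)}_P$ is bounded any open interval $I$ separating its non-zero spectrum from $0$ satisfies the conclusion. The main obstacle is the symbolic inversion step: one must verify that each successive correction really stays in the Szeg\H{o} calculus and that the asymptotic summation converges in $\Psi_n(X)$, but these facts follow from the composition and symbol calculus already set up in the preceding sections and in \cite{gh}.
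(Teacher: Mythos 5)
Your proposal is correct and follows essentially the same route as the paper: the paper also reduces the spectral gap to a uniform lower bound $\norm{T^{(q)}_Pu}\geq c\norm{u}$ on $(\mathrm{Ker}\,T^{(q)}_P)^{\perp}\subseteq\mathrm{Range}\,S^{(q)}$, invokes a Toeplitz parametrix $T^{(q)}_QT^{(q)}_P=S^{(q)}+F$ with $F$ smoothing (citing the symbolic construction in \cite{gh} that you spell out), and derives a contradiction from a minimizing sequence via compactness of the smoothing error. Your use of weak compactness plus compactness of $K$ on $L^2$ is interchangeable with the paper's Rellich-lemma step.
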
 

\begin{proof}
	The statement is equivalent to show that $a<\lVert T^{(q)}_Pu\rVert<b$, for every $u\in L^2_{(0,q)}(X)$, $u\perp{\rm Ker\,}T^{(q)}_P$, $\lVert u\rVert =1$. Since $P$ is of order zero, $T^{(q)}_P$ is $L^2$ bounded, there is a $b>0$ such that $
	\Vert T^{(q)}_Pu \rVert <b$, for all $u\in L^2_{(0,q)}(X)$, $\lVert u\rVert =1$. We only need to show that there is a $c>0$ such that $\Vert T^{(q)}_Pu \rVert \geq c$ for all $u\in L^2_{(0,q)}(X)$, $u\perp{\rm Ker\,}T^{(q)}_P$, $\lVert u\rVert =1$. If this is not true, we can find  $u_j\in L^2_{(0,q)}(X)$, $u_j\perp{\rm Ker\,}T_P$, $\lVert u_j\rVert =1$, $j=1,2,\dots$, such that $\Vert T^{(q)}_Pu_j \rVert< 1/j$, $j=1,2,\dots$. Let $v_j:=T^{(q)}_P u_j$, $j=1,2,\ldots$, then $\lim_j \lVert v_j\rVert=0$. 
	Since $\sigma^0_P(x,\mp\omega_0(x))\neq0$ for every $x\in X$, by using the same computation as in \cite{gh} we can find a pseudodifferential operator $Q$ of order $0$ such that 
	\[T^{(q)}_Q T^{(q)}_P= S^{(q)}+F,\]
	where $F$ is smoothing. Thus, $T^{(q)}_Q T^{(q)}_P u_j=T^{(q)}_Q v_j = S^{(q)}u_j+Fu_j$, $j=1,2,\dots$. Since $u_j\perp{\rm Ker\,}T^{(q)}_P$, $u_j = S^{q}u_j$, hence $T^{(q)}_Q v_j = u_j +Fu_j$, $j=1,2,\dots$. Since $T^{(q)}_Q$ is $L^2$ bounded, $\lim_j \lVert T^{(q)}_Q v_j \rVert=0$. Fix $\epsilon >0$. By Rellich's Lemma, there is a subsequence $1\leq j_1<j_2<\dots$ such that $u_{j_s}\rightarrow u$ in $H^{-\epsilon}(X)$ as $s\rightarrow +\infty$. Since $F$ is smoothing, $F:H^{-\epsilon}(X)\rightarrow L^2(X)$ is continuous, $\lim_s F u_{j_s}=u$ in $L^2_{(0,q)}(X)$. Thus, $\lim_su_{j_s}=u$ in $L^2_{(0,q)}(X)$. Since $u_j\perp{\rm Ker\,}T^{(q)}_P$ for each $j$, then $u\perp {\rm Ker\,}T^{(q)}_P$; but $T^{(q)}_P u=\lim_s T^{(q)}_P u_{j_s}=0$ and thus we get a contradiction. 
\end{proof}

Now, take $\tau\in\mathcal C^\infty_c(I)$, where $I\subset\mathbb R_+$ is an open interval with $\ol I$ is a compact subset of $\mathbb R_+$. 
Let $\Td\tau\in\mathcal C^\infty_c(I^{\mathbb C})$ be an almost analytic extension of $\tau$, where $I^{\mathbb C}$ is a bounded interval of $\mathbb C$ with $I^{\mathbb C}\cap\mathbb R=I$. The explicit expression for the operator $\tau(T^{(q)}_P)$ due to Heffler and Sj\"ostrand is the following: 
\begin{equation}\label{eq:chinh} 
\tau(T^{(q)}_P)=-\frac{1}{\pi}\int_{\mathbb{C}}\frac{\partial \tilde{\tau}(z)}{\partial \bar{z}} (z-T^{(q)}_P)^{-1} \mathrm{d}x\,\mathrm{d}y,
\end{equation}
where $z=x+iy$. From \eqref{eq:chinh}, we shall study the distributional kernel of $(z-T_P)^{-1}$. 

For simplicity, from now on, until further notice, we always assume that $n_-\neq n_+$. We will always assume that $z\in I^{\mathbb{C}}$ with $\mathrm{Im}z\neq0$. 

\begin{thm} \label{lem:Tgmf} 
We can find $B_z\in \Psi_{n,z}(X)$ so that 
\[(z-T^{(q)}_P)B_z\equiv S^{(q)}\mod\hat{\mathcal{C}}^\infty,\] 
$B_z\equiv S^{(q)}B_z\equiv B_zS^{(q)}$ and 
	\begin{align}\label{e-gue21212yydc} \pi_{x,n_-}\sigma^0_{B_z}(x,x,z)\pi_{x,n_-}=\frac{1}{z-\sigma^0_P(x,-\omega_0(x))}\cdot s_-^0(x,x)\,, 
	\end{align}
	for every $x\in X$, where $s^0_-(x,x)$ is as in Theorem~\ref{t-gue140205III}. 
\end{thm}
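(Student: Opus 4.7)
The plan is to construct $B_z$ as an iterative parametrix, realized as an asymptotic sum $B_z \sim \sum_{k\ge 0} B_{z,k}$ with $B_{z,k}\in\Psi_{n-k,z}(X)$ and $S^{(q)}B_{z,k}\equiv B_{z,k}\equiv B_{z,k}S^{(q)} \mod\hat{\mathcal C}^\infty$. For the leading piece, set $\psi(x,z):=(z-\sigma^0_P(x,-\omega_0(x)))^{-1}$; thanks to Lemma~\ref{l-gue211126yydq} the denominator is uniformly bounded below in modulus on $X$ when $z$ stays in $\hat U\cap I^{\mathbb C}$, so $\psi(\cdot,z)\in\hat{\mathcal C}^\infty(X\times\hat U)$ with seminorms controlled by powers of $|{\rm Im\,}z|^{-1}$. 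Define the $z$-dependent Toeplitz operator
\[
B_{z,0} := S^{(q)}\circ M_{\psi(\cdot,z)}\circ S^{(q)}.
\]
Running the proof of Theorem~\ref{t-gue210701yyd} while carrying along the $z$-parameter identifies $B_{z,0}$ as an element of $\Psi_{n,z}(X)$ whose diagonal leading symbol satisfies $\pi_{x,n_-}\,\sigma^0_{B_{z,0},-}(x,x)\,\pi_{x,n_-}=\psi(x,z)\,s^0_-(x,x)$, which already realizes \eqref{e-gue21212yydc}.

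Next, I would apply the composition calculus for complex Fourier integral operators of Szeg\H{o} type (the complex stationary phase argument behind Theorem~\ref{t-gue210701yyd}) to $T^{(q)}_P\circ B_{z,0}$. Using that $\sigma^0_{T^{(q)}_P,-}(x,x)=\sigma^0_P(x,-\omega_0(x))\,s^0_-(x,x)$ and that the composition is normalized so that $S^{(q)}\circ S^{(q)}\equiv S^{(q)}$, the diagonal leading symbol of $T^{(q)}_P B_{z,0}$ equals $\sigma^0_P(x,-\omega_0(x))\,\psi(x,z)\,s^0_-(x,x)$, so the $\pi_{x,n_-}$-diagonal leading symbol of
\[
R_{z,0} := (z-T^{(q)}_P)\,B_{z,0} - S^{(q)}
\]
vanishes. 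Lemma~\ref{l-gue211126ycdc} then upgrades $R_{z,0}$ to $\Psi_{n-1,z}(X)$. Inductively, if after $k$ corrections the error $R_{z,k}$ lies in $\Psi_{n-k-1,z}(X)$, I would choose $B_{z,k+1}\in\Psi_{n-k-1,z}(X)$ of the same sandwiched Toeplitz form with leading symbol prescribed by
\[
\pi_{x,n_-}\,\sigma^0_{B_{z,k+1},-}(x,x)\,\pi_{x,n_-} = -\psi(x,z)\,\pi_{x,n_-}\,\sigma^0_{R_{z,k},-}(x,x)\,\pi_{x,n_-};
\]
the composition calculus and Lemma~\ref{l-gue211126ycdc} then force the new error into $\Psi_{n-k-2,z}(X)$.

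To conclude, I would invoke the asymptotic summation lemma for $S^{m_0}_{z,\mathrm{cl}}$ from Section~\ref{s-gue210914yyd} to obtain $B_z\in\Psi_{n,z}(X)$ with $B_z-\sum_{k=0}^{N}B_{z,k}\in\Psi_{n-N-1,z}(X)$ for every $N$; combined with the telescoped identity for the partial sums, this yields $(z-T^{(q)}_P)B_z\equiv S^{(q)}\mod\hat{\mathcal C}^\infty$. The relations $S^{(q)}B_z\equiv B_z\equiv B_zS^{(q)}\mod\hat{\mathcal C}^\infty$ pass from each $B_{z,k}$ to the sum, and the explicit leading-symbol formula \eqref{e-gue21212yydc} is already achieved at step $k=0$. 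I expect the hardest part to be the $z$-uniformity of the composition calculus: one must verify that the complex stationary phase expansion behind Theorem~\ref{t-gue210701yyd} carries through when the amplitudes lie in the $z$-dependent classes of Definitions~\ref{d-gue140221a}--\ref{d-gue211126yyda}, so that all remainders land in $S^m_{z,1,0}$ with the $|{\rm Im\,}z|^{-N}$ losses absorbed into the $\hat{\mathcal C}^\infty$ coefficients rather than in the plain H\"ormander scale. Once this $z$-uniform composition is in hand, the proof proceeds exactly as in the classical non-parametric parametrix construction.
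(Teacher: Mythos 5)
Your proposal is correct and follows essentially the same route as the paper: an iterative parametrix whose leading term is an operator sandwiched between two copies of $S^{(q)}$ with diagonal leading symbol $(z-\sigma^0_P(x,-\omega_0(x)))^{-1}s^0_-(x,x)$, order reduction of the error via Lemma~\ref{l-gue211126ycdc}, asymptotic summation in the $z$-dependent symbol classes, and globalization. One small correction: what puts $\psi(\cdot,z)$ in $\hat{\mathcal{C}}^\infty$ is the elementary bound $\abs{z-\sigma^0_P(x,-\omega_0(x))}\geq\abs{{\rm Im\,}z}$ (valid because $\sigma^0_P$ is real-valued), not Lemma~\ref{l-gue211126yydq}, which concerns the spectrum of $T^{(q)}_P$ rather than its principal symbol.
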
 

\begin{proof}
 Let $x=(x_1,\ldots,x_{2n+1})$ be local coordinates of $X$ defined on an open set $D\subset X$. We will first work on $D$. 
Let $\hat B_{0,z}(x,y):=\int^{+\infty}_0e^{it\varphi_-(x,y)}\hat b_0(x,y,z,t)\mathrm{d}t$, where 
\[\hat b_0(x,y,z,t):=\frac{s^0_-(x,x)}{z-\sigma^0_P(x,-\omega_0(x))}t^n\in S^n_{z,{\rm cl\,}}(D\times D\times\mathbb R_+,T^{*0,q}X\boxtimes(T^{*0,q}X)^*).\]
Put $B_{0,z}:=S^{(q)}\circ\hat B_{0,z}\circ S^{(q)}$. From complex stationary phase formula of Melin-Sj\"ostrand, we have 
$B_{0,z}\in\Psi_{n,z}(X)$ and 
\[\begin{split}
&B_{0,z}(x,y)\equiv\int^{+\infty}_0e^{it\varphi_-(x,y)}b_0(x,y,z,t)\mathrm{d}t\mod\hat{\mathcal{C}}^\infty,\\
&\mbox{$b_0(x,y,z,t)\sim\sum^{+\infty}_{j=0}b_{0,j}(x,y,z)t^{n-j}$ in $S^n_{z,{\rm cl\,}}(D\times D\times\mathbb R_+,T^{*0,q}X\boxtimes(T^{*0,q}X)^*)$}
\end{split}\]
and 
\begin{equation}\label{e-gue211203ycd}
b_{0,0}(x,x,z)=\frac{s^0_-(x,x)}{z-\sigma^0_P(x,-\omega_0(x))}t^n.
\end{equation}
From complex stationary phase formula of Melin-Sj\"ostrand and \eqref{e-gue211203ycd}, it is straightforward to check that 
\begin{equation}\label{e-gue211203ycdI}
\begin{split}
&(z-T^{(q)}_P)B_{0,z}-S^{(q)}\equiv\int^{+\infty}_0e^{it\varphi_-(x,y)}\hat r_0(x,y,z,t)\mathrm{d}t\mod\hat{\mathcal{C}}^\infty,\\
&\mbox{$\hat r_0(x,y,z,t)\sim\sum^{+\infty}_{j=0}\hat r_{0,j}(x,y,z)t^{n-j}$ in $S^n_{z,{\rm cl\,}}(D\times D\times\mathbb R_+,T^{*0,q}X\boxtimes(T^{*0,q}X)^*)$},\\
&\mbox{$\hat r_{0,0}(x,x,z)=0$, for every $x\in D$}.
\end{split}\end{equation}
From Lemma~\ref{l-gue211126ycdc} and \eqref{e-gue211203ycdI}, we see that $(z-T^{(q)}_P)B_{0,z}-S^{(q)}\in\Psi_{n-1,z}(D)$. Thus, 
\begin{equation}\label{e-gue211203ycdII}\begin{split}
&(z-T^{(q)}_P)B_{0,z}-S^{(q)}\equiv\int^{+\infty}_0e^{it\varphi_-(x,y)}r_0(x,y,z,t)\mathrm{d}t\mod\hat{\mathcal{C}}^\infty,\\
&\mbox{$r_0(x,y,z,t)\sim\sum^{+\infty}_{j=0}r_{0,j}(x,y,z)t^{n-1-j}$ in $S^{n-1}_{z,{\rm cl\,}}(D\times D\times\mathbb R_+,T^{*0,q}X\boxtimes(T^{*0,q}X)^*)$},\\
&\mbox{$r_{0,0}(x,x,z)=0$, for every $x\in D$}.
\end{split}\end{equation}
Let 
\[\begin{split}
&\hat G_z(x,y):=\int^{+\infty}_0e^{it\varphi_-(x,y)}\hat b_1(x,y,z,t)\mathrm{d}t\in\Psi_{n-1,z}(D),\\
&\hat b_1(x,y,z,t):=-\pi_{x,n_-}r_{0,0}(x,x,z)\pi_{x,n_-}\Bigr(\frac{1}{2}\abs{{\rm det\,}\mathcal{L}_x}\pi^{-n-1}\Bigr)^{-1}\frac{t^{n-1}}{z-\sigma^0_P(x,-\omega_0(x))}.
\end{split}\]
Let $G_z:=S^{(q)}\circ\hat G_z\circ S^{(q)}$. From complex stationary phase formula, we have 
\begin{equation}\label{e-gue211203ycda}
\begin{split}
&G_z(x,y)\equiv\int^{+\infty}_0e^{it\varphi_-(x,y)}b_1(x,y,z,t)\mathrm{d}t\mod\hat{\mathcal{C}}^\infty,\\
&b_1(x,y,z,t)=-\pi_{x,n_-}r_{0,0}(x,x,z)\pi_{x,n_-}\frac{t^{n-1}}{z-\sigma^0_P(x,-\omega_0(x))}.
\end{split}\end{equation}
Let $B_{1,z}:=B_{0,z}+G_z$. From \eqref{e-gue211203ycdII} and \eqref{e-gue211203ycda}, we have 
\begin{equation}\label{e-gue211203ycdIIa}\begin{split}
&(z-T^{(q)}_P)B_{1,z}-S^{(q)}\equiv\int^{+\infty}_0e^{it\varphi_-(x,y)}r_1(x,y,z,t)\mathrm{d}t\mod\hat{\mathcal{C}}^\infty,\\
&\mbox{$r_1(x,y,z,t)\sim\sum^{+\infty}_{j=0}r_{1,j}(x,y,z)t^{n-1-j}$ in $S^{n-1}_{z,{\rm cl\,}}(D\times D\times\mathbb R_+,T^{*0,q}X\boxtimes(T^{*0,q}X)^*)$},\\
&\mbox{$\pi_{x,n_-}r_{1,0}(x,x,z)\pi_{x,n_-}=0$, for every $x\in D$}.
\end{split}\end{equation}
From Lemma~\ref{l-gue211126ycdc} and \eqref{e-gue211203ycdIIa}, we see that $(z-T^{(q)}_P)B_{1,z}-S^{(q)}\in\Psi_{n-2,z}(D)$. 
Continuing in this way, we get $b_j(x,y,z,t)\in S^{n-j}_{z,{\rm cl\,}}(D\times D\times\mathbb R_+,T^{*0,q}X\boxtimes(T^{*0,q}X)^*)$, $j=0,1,\ldots$, such that for every $N\in\mathbb N$, we have 
\[(z-T^{(q)}_P)B^N_{1,z}-S^{(q)}\in\Psi_{n-N-1,z}(D),\]
where $B^N_z:=\int^{+\infty}_0e^{it\varphi_-(x,y)}\sum^N_{j=0}b_j(x,y,z,t)\mathrm{d}t$. Let 
\[\mbox{$b(x,y,z,t)\sim\sum^{+\infty}_{j=0}b_j(x,y,z,t)$ in $S^{n}_{z,{\rm cl\,}}(D\times D\times\mathbb R_+,T^{*0,q}X\boxtimes(T^{*0,q}X)^*)$}\]
and put 
\[\Td B_{z,D}:=\int^{+\infty}_0e^{it\varphi_-(x,y)}b(x,y,z,t)\mathrm{d}t\in\Psi_{n,z}(D).\]
We have 
\[
(z-T^{(q)}_P)\Td B_{z,D}\equiv S^{(q)}\mod\hat{\mathcal{C}}^{\infty}.\]
We now assume that $b(x,y,z,t)$ is properly supported on $D\times D$. 

Assume that $X=\bigcup^N_{j=1}D_j$, $D_j$ is an open set of $X$ as above, $j=1,\ldots,N$. Let $\chi_j\in\mathcal{C}^\infty_c(D_j)$, $j=1,\ldots,N$, $\sum^N_{j=1}\chi_j\equiv1$ on $X$. Let $\Td\chi_j\in\mathcal{C}^\infty_c(D_j)$, $\Td\chi_j\equiv1$ near ${\rm supp\,}\chi_j$, $j=1,\ldots,N$. Put 
\[B_z:=\sum^N_{j=1}S^{(q)}\Td\chi_j\Td B_{z,D_j}\chi_jS^{(q)}.\]
Then, $B_z\in\Psi_{n,z}(X)$ and $(z-T^{(q)}_P)B_{z}\equiv S^{(q)}\mod\hat{\mathcal{C}}^{\infty}$.  The theorem follows. 
\end{proof}

We need 

\begin{lem}\label{l-gue211205yydII}
For every $s\in\mathbb Z$, there exist $N_s\in\mathbb N$ and $c>0$ such that
\begin{equation}\label{e-gue211205yydIIa}
\norm{(z-T^{(q)}_P)^{-1}S^{(q)}u}_{s-1}\leq\frac{c}{\abs{{\rm Im\,}z}^{N_s}}\norm{u}_s,
\end{equation}
for every $z\in I^{\mathbb C}$ with ${\rm Im\,}z\neq0$ and every $u\in\Omega^{0,q}(X)$.

Thus, $(z-T^{(q)}_P)^{-1}S^{(q)}$ can be extended to a continuous operator: 
\[(z-T^{(q)}_P)^{-1}S^{(q)}: H^s(X,T^{*0,q}X)\To H^{s-1}(X,T^{*0,q}X)\]
and we have the estimates \eqref{e-gue211205yydIIa}, for every $s\in\mathbb Z$, every $z\in I^{\mathbb C}$ with ${\rm Im\,}z\neq0$  and every $u\in H^s(X,T^{*0,q}X)$.
\end{lem}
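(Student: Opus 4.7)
The strategy is to use the parametrix from Theorem~\ref{lem:Tgmf} to express $(z-T^{(q)}_P)^{-1}S^{(q)}$ as a $\Psi_{n,z}(X)$ operator plus a smoothing correction. By Theorem~\ref{lem:Tgmf} there is $B_z\in\Psi_{n,z}(X)$ with $(z-T^{(q)}_P)B_z=S^{(q)}+R_z$, where $R_z$ has kernel in $\hat{\mathcal{C}}^\infty$. Since $P$ is of order zero, $T^{(q)}_P=S^{(q)}PS^{(q)}$ is bounded and self-adjoint on $L^2_{(0,q)}(X)$, so for $z\in I^{\mathbb{C}}$ with $\mathrm{Im}\,z\neq 0$ the resolvent $(z-T^{(q)}_P)^{-1}$ exists with $L^2$-operator norm at most $|\mathrm{Im}\,z|^{-1}$. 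Applying the resolvent on the left yields
\[
V_z:=(z-T^{(q)}_P)^{-1}S^{(q)}=B_z-(z-T^{(q)}_P)^{-1}R_z.
\]
From $T^{(q)}_PS^{(q)}=S^{(q)}T^{(q)}_P=T^{(q)}_P$ one gets $[S^{(q)},(z-T^{(q)}_P)^{-1}]=0$, and therefore $(z-T^{(q)}_P)^{-1}R_z=V_zR_z+\frac{1}{z}(I-S^{(q)})R_z$. Substituting back and iterating $N$ times produces
\[
V_z=\sum_{k=0}^{N-1}(-1)^k B_zR_z^k+(-1)^NV_zR_z^N+\frac{1}{z}(I-S^{(q)})\sum_{k=1}^{N}(-1)^kR_z^k.
\]

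Each $B_zR_z^k$ maps $H^s\to H^{s-1}$ with polynomial bound in $|\mathrm{Im}\,z|^{-1}$ by Lemma~\ref{l-gue211205yydI}, since $R_z^k$ is $z$-smoothing and hence bounded on every Sobolev space with polynomial $|\mathrm{Im}\,z|^{-1}$ dependence. Each $\frac{1}{z}(I-S^{(q)})R_z^k$ is itself $z$-smoothing (note that $|z|$ is bounded below on $I^{\mathbb{C}}$ since $0\notin\overline{I}$), so it satisfies the same Sobolev bound. For the tail $V_zR_z^N$, one takes $N$ sufficiently large and then repeats the elliptic-conjugation argument from the proof of Lemma~\ref{l-gue211205yydI}: conjugating by elliptic pseudodifferential operators $\Lambda_{s-1}$ and $\Lambda_{-s}$ of orders $s-1$ and $-s$, the $L^2$-bound on $V_z$ combined with the arbitrarily high Sobolev smoothing of $R_z^N$ makes $\Lambda_{s-1}V_zR_z^N\Lambda_{-s}$ a bounded operator from $L^2$ to $L^2$ with polynomial $|\mathrm{Im}\,z|^{-1}$ bound, which is equivalent to the desired $H^s\to H^{s-1}$ bound. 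Summing the three contributions gives \eqref{e-gue211205yydIIa} on $\Omega^{0,q}(X)$, and the continuous extension to $H^s(X,T^{*0,q}X)$ then follows by density.

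The hardest step is the tail estimate for $V_zR_z^N$, because the $L^2$ resolvent bound by itself provides no Sobolev regularity for the range of $V_z$. The key point is that $R_z^N$ can be made to smooth by arbitrarily many derivatives (at the cost of a larger polynomial factor in $|\mathrm{Im}\,z|^{-1}$), which allows the Sobolev shift introduced by the elliptic conjugation to be absorbed for each fixed $s\in\mathbb{Z}$, at the price of letting the exponent $N_s$ grow with $|s|$.
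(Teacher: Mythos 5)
Your overall architecture (the parametrix $B_z$ from Theorem~\ref{lem:Tgmf}, the $L^2$ resolvent bound $\norm{(z-T^{(q)}_P)^{-1}u}\leq\abs{{\rm Im\,}z}^{-1}\norm{u}$, and the fact that the remainder is $z$-smoothing) is the same as the paper's, and your algebraic identities, in particular $(z-T^{(q)}_P)^{-1}(I-S^{(q)})=\frac{1}{z}(I-S^{(q)})$, are correct; the terms $B_zR_z^k$ and $\frac{1}{z}(I-S^{(q)})R_z^k$ are handled correctly. But the tail estimate, which you yourself flag as the hardest step, does not work as written. In $\Lambda_{s-1}V_zR_z^N\Lambda_{-s}$ the smoothing factor $R_z^N$ sits to the \emph{right} of $V_z:=(z-T^{(q)}_P)^{-1}S^{(q)}$: it regularizes the input of $V_z$ but gives no information about the output. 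Reading the composition right to left, $R_z^N\Lambda_{-s}$ maps $L^2$ into $H^M$ for any $M$ with polynomial bounds in $\abs{{\rm Im\,}z}^{-1}$, but then $V_z$ only lands you in $L^2$ (that is all the resolvent bound provides), so $\Lambda_{s-1}$, of order $s-1$, sends you to $H^{1-s}$. For $s\geq2$ this is strictly weaker than $L^2$ and no $L^2\To L^2$ bound follows; increasing $N$ does not help, since it only improves the part of the composition that is already harmless. Nor can you move $\Lambda_{s-1}$ past $V_z$: controlling $[\Lambda_{s-1},(z-T^{(q)}_P)^{-1}]$ would require precisely the Sobolev mapping property being proved. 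So your argument establishes \eqref{e-gue211205yydIIa} only for $s\leq1$ — exactly the range the paper also obtains directly.

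The missing ingredient is duality. The adjoint of $(z-T^{(q)}_P)^{-1}S^{(q)}$ is $(\bar z-T^{(q)}_P)^{-1}S^{(q)}$, which satisfies the same estimates with the same $\abs{{\rm Im\,}z}$; hence the bound $H^s\To H^{s-1}$ for all $s\leq1$ dualizes to $H^{1-s}\To H^{-s}$ for all $s\leq1$, i.e.\ $H^t\To H^{t-1}$ for all $t\geq0$, and the two ranges together cover all of $\mathbb Z$. This is how the paper finishes (it takes adjoints in \eqref{e-gue211205yydIIa}). With that one extra step your proof closes; without it there is a genuine gap for $s\geq2$.
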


\begin{proof}
Fix $s\in\mathbb Z$, $s\leq1$. From Theorem~\ref{lem:Tgmf}, we write 
\begin{equation}\label{e-gue211210yyd}
(z-T^{(q)}_P)B_z\equiv S^{(q)}+S^{(q)}R_z,
\end{equation}
where $R_z$ is $z$-smoothing. From \eqref{e-gue211210yyd}, we have 
\begin{equation}\label{e-gue211210yydI}
B_z\equiv (z-T^{(q)}_P)^{-1}S^{(q)}+(z-T^{(q)}_P)^{-1}S^{(q)}R_z.
\end{equation}
Note that 
\begin{equation}\label{e-gue211210yydII}
\norm{(z-T^{(q)}_P)^{-1}u}\leq\frac{1}{\abs{{\rm Im\,}z}}\norm{u},
\end{equation}
for every $z\in I^{\mathbb C}$ with ${\rm Im\,}z\neq0$ and every $u\in L^2_{(0,q)}(X)$. Since $R_z$ is $z$-smoothing, we have 
\begin{equation}\label{e-gue211210yydIII}
\norm{R_zu}\leq\frac{c}{\abs{{\rm Im\,}z}^{N_s}}\norm{u}_s,
\end{equation}
for every $z\in I^{\mathbb C}$ with ${\rm Im\,}z\neq0$ and every $u\in H^s(X,T^{*0,q}X)$, where $c>0$ and $N_s\in\mathbb N$. From \eqref{e-gue211210yydII} and \eqref{e-gue211210yydIII}, we deduce that 
\begin{equation}\label{e-gue211210yyda}
\norm{(z-T^{(q)}_P)^{-1}S^{(q)}R_zu}_{s-1}\leq\norm{(z-T^{(q)}_P)^{-1}S^{(q)}R_zu}\leq\frac{c}{\abs{{\rm Im\,}z}^{N_s}}\norm{u}_s,
\end{equation}
for every $z\in I^{\mathbb C}$ with ${\rm Im\,}z\neq0$ and every $u\in H^s(X,T^{*0,q}X)$, where $c>0$ and $N_s\in\mathbb N$.
From Lemma~\ref{l-gue211205yydI},\eqref{e-gue211210yydI} and \eqref{e-gue211210yyda}, we get \eqref{e-gue211205yydIIa} for $s\leq1$. 
Since $(z-T^{(q)}_P)^{-1}S^{(q)}$ is self-adjoint, by taking adjoint in \eqref{e-gue211205yydIIa}, we get \eqref{e-gue211205yydIIa} for all $s\in\mathbb Z$.
The lemma follows. 
\end{proof}

\begin{lem}\label{l-gue211205yydIIa}
We have 
\begin{equation}\label{e-gue211211yyd}
(z-T^{(q)}_P)^{-1}S^{(q)}\equiv B_z\mod\hat{\mathcal{C}}^\infty, 
\end{equation}
where $B_z$ is as in Theorem~\ref{lem:Tgmf}.
\end{lem}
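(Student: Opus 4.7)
The plan is to derive \eqref{e-gue211211yyd} by applying $(z - T^{(q)}_P)^{-1}$ to the approximate inverse relation from Theorem~\ref{lem:Tgmf}, and then showing that the resulting error term is $z$-smoothing, using the Sobolev bounds of Lemma~\ref{l-gue211205yydII} together with the elementary $L^2$ resolvent bound $\norm{(z - T^{(q)}_P)^{-1}}_{L^2 \to L^2} \leq \abs{\mathrm{Im}\,z}^{-1}$ coming from the self-adjointness of $T^{(q)}_P$.

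First I would rewrite the conclusion of Theorem~\ref{lem:Tgmf} as $(z - T^{(q)}_P) B_z = S^{(q)} + E_z$, where $E_z$ is $z$-smoothing. Since $B_z = S^{(q)} B_z$, both $S^{(q)} B_z$ and $T^{(q)}_P B_z = S^{(q)} P S^{(q)} B_z$ take values in ${\rm Ker\,}\Box^q_b$, so in particular $E_z = S^{(q)} E_z$. Applying $(z - T^{(q)}_P)^{-1}$ on the left of the above identity yields
\[ (z - T^{(q)}_P)^{-1} S^{(q)} - B_z = -(z - T^{(q)}_P)^{-1} S^{(q)} E_z, \]
so it suffices to show that the right-hand side belongs to $\hat{\mathcal{C}}^\infty(X \times X \times \hat U, T^{*0,q}X \boxtimes (T^{*0,q}X)^*)$.

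For this I would fix arbitrary integers $s, s'$. Since $E_z$ is $z$-smoothing, the operator $E_z : H^{s'}(X, T^{*0,q}X) \to H^{s}(X, T^{*0,q}X)$ is continuous with norm $\leq c\,\abs{\mathrm{Im}\,z}^{-N}$ for some $c > 0$, $N \in \mathbb{N}$. By Lemma~\ref{l-gue211205yydII}, $(z - T^{(q)}_P)^{-1} S^{(q)}$ is continuous from $H^{s}$ to $H^{s-1}$ with polynomial loss in $\abs{\mathrm{Im}\,z}^{-1}$. Composing gives continuity $H^{s'} \to H^{s-1}$ for every $s, s' \in \mathbb{Z}$, again with polynomial loss. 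Sobolev embedding together with the Schwartz kernel theorem then identifies the kernel of $(z - T^{(q)}_P)^{-1} S^{(q)} E_z$ as an element of $\hat{\mathcal{C}}^\infty$, proving \eqref{e-gue211211yyd}.

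The main technical point will be translating the uniform Sobolev-norm bounds with polynomial $\abs{\mathrm{Im}\,z}^{-1}$ loss into the pointwise control on arbitrary $x$- and $y$-derivatives of the kernel that the definition of $\hat{\mathcal{C}}^\infty$ demands. As in the proof of Lemma~\ref{l-gue211205yydI}, this is handled by pre- and post-composing with fixed elliptic classical pseudodifferential operators of sufficiently large negative order, converting pointwise derivative control on the kernel into Sobolev-norm control on the operator.
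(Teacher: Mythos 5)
Your proposal is correct and follows essentially the same route as the paper: both start from $(z-T^{(q)}_P)B_z=S^{(q)}+S^{(q)}R_z$ with $R_z$ $z$-smoothing, apply $(z-T^{(q)}_P)^{-1}$, and then use Lemma~\ref{l-gue211205yydII} together with the $z$-smoothing of the error to get $H^{s_1}\To H^{s_2}$ bounds with polynomial loss in $\abs{{\rm Im\,}z}^{-1}$ for all $s_1,s_2$, concluding by Sobolev embedding. Your explicit observation that $E_z=S^{(q)}E_z$ (needed so that Lemma~\ref{l-gue211205yydII} applies) is a point the paper uses implicitly, and is a welcome clarification.
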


\begin{proof}
From \eqref{e-gue211210yydI}, we have
\begin{equation}\label{e-gue211211yydm}
B_z\equiv (z-T^{(q)}_P)^{-1}S^{(q)}+(z-T^{(q)}_P)^{-1}S^{(q)}R_z, 
\end{equation}
where $R_z$ is $z$-smoothing. From Lemma~\ref{l-gue211205yydII}, we see that for every $s_1, s_2\in\mathbb Z$, there exist $c>0$ and $N>0$ such that 
\begin{equation}\label{e-gue211211yydI}
\norm{(z-T^{(q)}_P)^{-1}S^{(q)}R_zu}_{s_2}\leq\frac{c}{\abs{{\rm Im\,}z}^{N}}\norm{u}_{s_1},
\end{equation}
for every $z\in I^{\mathbb C}$ with ${\rm Im\,}z\neq0$ and every $u\in\Omega^{0,q}(X)$. From \eqref{e-gue211211yydI} and Sobolev embedding theorem, we get 
\begin{equation}\label{e-gue211211yydII}
(z-T^{(q)}_P)^{-1}S^{(q)}R_z\equiv0\mod\hat{\mathcal{C}}^\infty. 
\end{equation}
From \eqref{e-gue211211yydm} and \eqref{e-gue211211yydII}, the lemma follows. 
\end{proof}

We can  now prove one of the main results of this work

\begin{thm}\label{t-gue211211yyd}
With the notations and assumptions above, assume that $0\notin{\rm supp\,}\tau$. Then, $\tau(T^{(q)}_P)\in\Psi_n(X)$ with 
\begin{equation}\label{e-gue211212yyd}
\mbox{$\sigma^0_{\tau(T^{(q)}_{P})}(x,x)=\frac{1}{2}\pi^{-n-1}\abs{{\rm det\,}\mathcal L_x}\tau(\sigma^0_P(x,-\omega_0(x)))\pi_{x,n_-}$, for every $x\in X$}.
\end{equation}
\end{thm}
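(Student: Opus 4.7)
The plan is to apply the Helffer--Sjöstrand formula \eqref{eq:chinh} and to substitute the oscillatory-integral description of the resolvent supplied by Theorem~\ref{lem:Tgmf} and Lemma~\ref{l-gue211205yydIIa}. First I would observe that since $T^{(q)}_P=S^{(q)}PS^{(q)}$ vanishes on $({\rm Ker\,}\Box^q_b)^{\perp}$ and $0\notin{\rm supp\,}\tau$, the functional calculus decomposes as $\tau(T^{(q)}_P)=\tau(T^{(q)}_P)S^{(q)}+\tau(0)(I-S^{(q)})=\tau(T^{(q)}_P)S^{(q)}$. Hence \eqref{eq:chinh} becomes
\[
\tau(T^{(q)}_P)=-\frac{1}{\pi}\int_{\mathbb C}\frac{\pr\Td\tau(z)}{\pr\ol z}\,(z-T^{(q)}_P)^{-1}S^{(q)}\,\mathrm{d}x\,\mathrm{d}y,
\]
and by Lemma~\ref{l-gue211205yydIIa} we may replace $(z-T^{(q)}_P)^{-1}S^{(q)}$ by the $z$-Fourier integral operator $B_z\in\Psi_{n,z}(X)$ of Theorem~\ref{lem:Tgmf} modulo an operator whose kernel lies in $\hat{\mathcal C}^\infty$.

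Next I would use almost-analyticity of $\Td\tau$: on any compact subset of $\mathbb C$, $\pr_{\ol z}\Td\tau(z)=O(\abs{{\rm Im\,}z}^N)$ for every $N$. Coupled with the $\hat{\mathcal C}^\infty$ growth bounds $\abs{\pr^\alpha_x\pr^\beta_y \cdot}\leq C_K\abs{{\rm Im\,}z}^{-N(\alpha,\beta)}$ built into Definitions~\ref{d-gue140221a}--\ref{d-gue140221b}, this shows that the $\hat{\mathcal C}^\infty$ error integrates to a genuinely smoothing kernel on $X\times X$, and that the integration of any $z$-symbol $b_j(x,y,z)\in\hat{\mathcal C}^\infty$ against $\frac{1}{\pi}\pr_{\ol z}\Td\tau(z)$ produces a smooth function of $(x,y)$. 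Working in a local coordinate patch $D$ with $B_z(x,y)\equiv\int^\infty_0 e^{it\varphi_-(x,y)}b(x,y,z,t)\mathrm{d}t$ and $b\sim\sum_j b_j(x,y,z)t^{n-j}$ in $S^n_{z,{\rm cl\,}}$, interchanging the $z$- and $t$-integrals gives
\[
\tau(T^{(q)}_P)(x,y)\equiv\int^\infty_0 e^{it\varphi_-(x,y)}a(x,y,t)\,\mathrm{d}t,\quad
a(x,y,t)\sim\sum_{j=0}^\infty a_j(x,y)\,t^{n-j},
\]
with $a_j(x,y):=-\frac{1}{\pi}\int_{\mathbb C}\pr_{\ol z}\Td\tau(z)\,b_j(x,y,z)\,\mathrm{d}x\,\mathrm{d}y\in\mathcal C^\infty(D\times D,T^{*0,q}X\boxtimes(T^{*0,q}X)^*)$, so that $a\in S^n_{{\rm cl\,}}$; the interchange is permissible because the tail estimates of $b$ in $t$ are uniform in $z$ on compact subsets.

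For the leading symbol on the diagonal I would invoke the classical Helffer--Sjöstrand reproducing identity
\[
\tau(\lambda)=-\frac{1}{\pi}\int_{\mathbb C}\frac{\pr\Td\tau(z)}{\pr\ol z}\,\frac{1}{z-\lambda}\,\mathrm{d}x\,\mathrm{d}y,\quad\lambda\in\mathbb R,
\]
applied with $\lambda=\sigma^0_P(x,-\omega_0(x))$. Combined with \eqref{e-gue21212yydc} of Theorem~\ref{lem:Tgmf}, the projection-valued identity $\pi_{x,n_-}s^0_-(x,x)=s^0_-(x,x)$ and Theorem~\ref{t-gue140205III}, this yields
\[
a_0(x,x)=\tau\bigl(\sigma^0_P(x,-\omega_0(x))\bigr)\cdot s^0_-(x,x)=\frac{1}{2}\pi^{-n-1}\abs{\det\mathcal L_x}\,\tau\bigl(\sigma^0_P(x,-\omega_0(x))\bigr)\,\pi_{x,n_-},
\]
which is \eqref{e-gue211212yyd}. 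Finally, $z$-smoothness away from the diagonal of $B_z$ combined with the same almost-analytic estimate shows that $\tau(T^{(q)}_P)$ is smoothing off the diagonal on $X$, so the local oscillatory integral description patches together to yield $\tau(T^{(q)}_P)\in\Psi_n(X)$.

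The main obstacle I anticipate is the rigorous justification of the interchange of the $z$-integration with the oscillatory $t$-integration and with the semi-norm estimates defining $S^n_{{\rm cl\,}}$; this is where the polynomial $\abs{{\rm Im\,}z}^{-N}$ blow-up in Definition~\ref{d-gue140221a} must be exactly absorbed by the infinite-order vanishing of $\pr_{\ol z}\Td\tau$, uniformly in all compact derivatives. The Sobolev estimates recorded in Lemma~\ref{l-gue211205yydI} and Lemma~\ref{l-gue211205yydII} are precisely what allow one to promote the pointwise almost-analytic cancellation to statements about operator kernels and to identify the smoothing remainder.
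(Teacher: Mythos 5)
Your proposal is correct and follows essentially the same route as the paper: decompose $\tau(T^{(q)}_P)=\tau(T^{(q)}_P)S^{(q)}$ using $0\notin{\rm supp\,}\tau$, insert the parametrix $B_z$ of Theorem~\ref{lem:Tgmf} into the Helffer--Sj\"ostrand formula via Lemma~\ref{l-gue211205yydIIa}, and recover the leading symbol from \eqref{e-gue21212yydc} by the Cauchy (Helffer--Sj\"ostrand reproducing) identity. You in fact spell out the interchange of the $z$- and $t$-integrations and the absorption of the $\abs{{\rm Im\,}z}^{-N}$ growth by the almost-analytic vanishing of $\pr_{\ol z}\Td\tau$ in more detail than the paper does.
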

 
\begin{proof}
Since $0\notin{\rm supp\,}\tau$, it is not difficult to see that 
\[\tau(T^{(q)}_P)=\tau(T^{(q)}_P)\circ S^{(q)}=S^{(q)}\circ\tau(T^{(q)}_P).\]
From this observation and \eqref{eq:chinh}, we have 
\begin{equation}\label{e-gue211212yyda}
\tau(T^{(q)}_P)=-\frac{1}{\pi}\int_{\mathbb{C}}\frac{\partial \tilde{\tau}(z)}{\partial \bar{z}} (z-T^{(q)}_P)^{-1}\circ S^{(q)} \mathrm{d}x\,\mathrm{d}y.
\end{equation}
From Lemma~\ref{l-gue211205yydIIa} and \eqref{e-gue211212yyda}, we have 
\begin{equation}\label{e-gue211212yydb}
\tau(T^{(q)}_P)\equiv-\frac{1}{\pi}\int_{\mathbb{C}}\frac{\partial \tilde{\tau}(z)}{\partial \bar{z}}B_z \mathrm{d}x\,\mathrm{d}y.
\end{equation}
From \eqref{e-gue21212yydc}, \eqref{e-gue211212yydb} and Cauchy integral formula, we get 
\eqref{e-gue211212yyd}. 
\end{proof} 

If $q=n_-=n_+$, we can repeat the proof of Theorem~\ref{e-gue211212yyd} with minor changes and deduce that 

\begin{thm}\label{t-gue211211yyda}
With the notations and assumptions above, assume that $0\notin{\rm supp\,}\tau$. Suppose $q=n_-=n_+$. Then, $\tau(T^{(q)}_P)\in\Psi_n(X)$ with 
\begin{equation}\label{e-gue211212ycd}
\begin{split}
&\mbox{$\sigma^0_{\tau(T^{(q)}_{P}),-}(x,x)=\frac{1}{2}\pi^{-n-1}\abs{{\rm det\,}\mathcal L_x}\tau(\sigma^0_P(x,-\omega_0(x)))\pi_{x,n_-}$, for every $x\in X$},\\
&\mbox{$\sigma^0_{\tau(T^{(q)}_{P}),+}(x,x)=\frac{1}{2}\pi^{-n-1}\abs{{\rm det\,}\mathcal L_x}\tau(\sigma^0_P(x,\omega_0(x)))\pi_{x,n_+}$, for every $x\in X$}.
\end{split}
\end{equation}
\end{thm}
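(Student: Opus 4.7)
The plan is to adapt the proof of Theorem~\ref{t-gue211211yyd} by building a parametrix that carries contributions from both phase functions $\varphi_-$ and $\varphi_+$, reflecting the decomposition $S^{(q)}=S_-+S_+$ available when $q=n_-=n_+$. First I would record the $+$ analogue of Theorem~\ref{t-gue140205III}, namely $s^0_+(x,x)=\frac{1}{2}\pi^{-n-1}|\det\mathcal{L}_x|\pi_{x,n_+}$ (mentioned in the excerpt as following in the same way from \cite[\S 9]{hsiao}). The crucial geometric fact that makes the adaptation work is that ${\rm WF}'(S_-)$ and ${\rm WF}'(S_+)$ live over disjoint cones $\Sigma^-$ and $\Sigma^+$, so the mixed composites $S_-\,P\,S_+$ and $S_+\,P\,S_-$ are smoothing, and $T^{(q)}_P\equiv S_-PS_-+S_+PS_+\pmod{\mathcal{C}^\infty}$.

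Second, I would construct the parametrix as $B_z=B_{z,-}+B_{z,+}$ by running the iterative scheme of Theorem~\ref{lem:Tgmf} independently for each sign. The minus construction is unchanged. For the plus construction, start from
\[\hat b_{0,+}(x,y,z,t):=\frac{s^0_+(x,x)}{z-\sigma^0_P(x,\omega_0(x))}\,t^n,\]
set $\hat B_{0,z,+}(x,y):=\int_0^{\infty}e^{it\varphi_+(x,y)}\hat b_{0,+}(x,y,z,t)\,\mathrm{d}t$ and $B_{0,z,+}:=S^{(q)}\hat B_{0,z,+}S^{(q)}$. The Melin--Sj\"ostrand complex stationary phase theorem then produces a full symbol expansion for $B_{0,z,+}$ whose leading diagonal value is the one prescribed above, and the iterative symbol improvement driven by Lemma~\ref{l-gue211126ycdc} yields symbols $b_{+,j}(x,y,z,t)\in S^{n-j}_{z,{\rm cl}}$ so that the residue $(z-T^{(q)}_P)B^{N}_{z,+}-S_+$ lies in $\Psi_{n-N-1,z}(X)$ for every $N$. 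A properly supported asymptotic sum followed by the partition-of-unity patching from the end of the proof of Theorem~\ref{lem:Tgmf} then delivers $B_{z,+}\in\Psi_{n,z}(X)$ with $(z-T^{(q)}_P)B_{z,+}\equiv S_+\pmod{\hat{\mathcal{C}}^\infty}$. Adding the two branches gives $B_z\in\Psi_{n,z}(X)$ with
\[(z-T^{(q)}_P)B_z\equiv S^{(q)}\pmod{\hat{\mathcal{C}}^\infty}\]
and the prescribed leading symbols in both signs as in \eqref{e-gue21212yydc}.

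Third, with this $B_z$ in hand the Sobolev estimates of Lemma~\ref{l-gue211205yydII} and the identification of Lemma~\ref{l-gue211205yydIIa} carry over verbatim, since they only used the parametrix identity, Lemma~\ref{l-gue211205yydI} and the Sobolev embedding theorem. Hence $(z-T^{(q)}_P)^{-1}S^{(q)}\equiv B_z\bmod\hat{\mathcal{C}}^\infty$. Because $0\notin{\rm supp}\,\tau$ gives $\tau(T^{(q)}_P)=\tau(T^{(q)}_P)S^{(q)}$, the Helffer--Sj\"ostrand formula \eqref{eq:chinh} yields
\[\tau(T^{(q)}_P)\equiv -\frac{1}{\pi}\int_{\mathbb{C}}\frac{\partial\widetilde{\tau}(z)}{\partial\bar{z}}\,B_z\,\mathrm{d}x\,\mathrm{d}y\pmod{\hat{\mathcal{C}}^\infty}.\]
Applying Cauchy's integral formula separately to the two leading symbols $\sigma^0_{B_z,\mp}(x,x)$ produces the two identities of \eqref{e-gue211212ycd}.

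The hard part will be verifying, at the symbolic level, that the two sign branches of the parametrix do not interfere: when $(z-T^{(q)}_P)$ is composed with a $\varphi_\mp$ FIO, only the $\varphi_\mp$ branch contributes modulo $\hat{\mathcal{C}}^\infty$. This is ultimately a wave-front disjointness statement, but in the $z$-dependent framework of Definitions~\ref{d-gue211126yyd}--\ref{typez} one must control the $|{\rm Im}\,z|^{-N}$ losses uniformly; the almost analytic extension mechanism of Remark~\ref{r-gue211201yyd} and the $z$-smoothing bookkeeping of Lemma~\ref{l-gue211126yyd} are exactly what is needed, and the argument otherwise reduces to the computations already performed for the $-$ branch.
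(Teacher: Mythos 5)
Your proposal is correct and is essentially the paper's argument: the paper simply states that one repeats the proof of Theorem~\ref{t-gue211211yyd} ``with minor changes,'' and your write-up spells out precisely those changes --- adding a $\varphi_+$-branch parametrix built from $s^0_+$ and $\sigma^0_P(x,\omega_0(x))$, using the disjointness of ${\rm WF'}(S_-)$ and ${\rm WF'}(S_+)$ to kill the cross terms, and then running the same resolvent identification and Helffer--Sj\"ostrand/Cauchy computation on each branch.
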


\subsection{Group actions on CR manifolds}\label{s-gue211213yyd}

We recall that $J:HX\To HX$ is the complex structure map. In this section we assume that $X$ admits an actions of a $d$-dimensional connected compact Lie group $G$. We assume throughout that

\begin{ass}\label{a-gue170123I}
The group $G$ acts via contactomorphisms and holomorphisms. This means that $g^\ast\omega_0=\omega_0$ on $X$ and also $g_\ast J=Jg_\ast$ on $HX$, for every $g\in G$, where $g^*$ and $g_*$ denote the pull-back map and push-forward map of $G$, respectively. 
\end{ass}

We denote the Lie algebra of the group $G$ by $\mathfrak{g}$. Furthermore for every $\xi \in \mathfrak{g}$, we write $\xi_X$ to denote the infinitesimal vector field on $X$ induced by $\xi$, which is given by $$(\xi_X u)(x)=\frac{\partial}{\partial t}\left(u(\exp_G(t\xi)\circ x)\right)|_{t=0}\,,$$ for any smooth function $u\in \mathcal{C}^\infty(X)$. 

\begin{defn}\label{d-gue170124}
	The momentum map associated to the contact form $\omega_0$ is a map $\mu:X \to \mathfrak{g}^*$ such that
	\begin{equation}\label{E:cmpm}
		\langle \mu(x), \xi \rangle = \omega_0(\xi_X(x))
	\end{equation}
	 for all $x \in X$ and $\xi \in \mathfrak{g}$.
\end{defn}

Let us denote by $b$ the non-degenerate bi-linear form  
\begin{equation}\label{E:biform}
	b(\cdot , \cdot) = \mathrm{d}\omega_0(\cdot , J\cdot) \text{ on } HX\,,
\end{equation} 
and  
$$\underline{\mathfrak{g}}^{\perp_b}=\set{v\in HX;\, b(\xi_X,v)=0,\ \ \forall \xi_X\in\underline{\mathfrak{g}}}$$
where $\underline{\mathfrak{g}}={\rm Span\,}(\xi_X;\, \xi\in\mathfrak{g})$.

We need also to assume that 

\begin{ass}\label{a-gue170123II}
$0\in \mathfrak{g}^*$ is a regular value for the moment map $\mu$, the action $G$ on the submanifold $\mu^{-1}(0)$ is free and 
\begin{equation}\label{e-gue200120yydI}
\mbox{$\underline{\mathfrak{g}}_x\cap \underline{\mathfrak{g}}^{\perp_b}_x=\set{0}$ }, 
\end{equation}
at every point $x\in\mu^{-1}(0)$.
\end{ass}

By assumptions \ref{a-gue170123I} and \ref{a-gue170123II} above, $Y:=\mu^{-1}(0)$ is a submanifold of $X$ a co-dimension $d$. Let $HY:=HX\cap TY$. Note that if the Levi form is positive at $Y$, then \eqref{e-gue200120yydI} holds. 
Fix a $G$-invariant smooth Hermitian metric $\langle\, \cdot \,|\, \cdot \,\rangle$ on $TX\otimes \mathbb{C}$ so that $T^{1,0}X$ is orthogonal to $T^{0,1}X$, furthermore $\underline{\mathfrak{g}}$ is orthogonal to $HY\cap JHY$ at every point of $Y$, $\langle \, u \,|\, v \, \rangle$ is real if $u, v$ are real tangent vectors and $\langle\,R\,|\,R\,\rangle=1$.

\subsubsection{Invariant Fourier integral operators of Szeg\H{o} type}\label{s-gue211212yyd}

We first recall some results about invariant Fourier integral operators of Szeg\H{o} type from \cite{gh}. For a given $g\in G$ let $$g^*:\Lambda^r_x( T^*X\otimes \mathbb{C})\To\Lambda^r_{g^{-1}\circ x}(T^*X\otimes \mathbb{C})$$ 
be the pull-back map. Since the action is holomorphic, we have that $g^*:T^{*0,q}_xX\To T^{*0,q}_{g^{-1}\circ x}X$, for all $x\in X.$ Thus, for $u\in\Omega^{0,q}(X)$, we have $g^*u\in\Omega^{0,q}(X)$. Furthermore, put $$\Omega^{0,q}(X)^G:=\set{u\in\Omega^{0,q}(X);\, g^*u=u,\ \ \forall g\in G}.$$
Since the Hermitian metric $\langle\,\cdot\,|\,\cdot\,\rangle$ on $ TX \otimes \mathbb{C}$ is $G$-invariant, the $L^2$ inner product $(\,\cdot\,|\,\cdot\,)$ on $\Omega^{0,q}(X)$ induced by $\langle\,\cdot\,|\,\cdot\,\rangle$ is $G$-invariant. Let $u\in L^2_{(0,q)}(X)$ and $g\in G$, we can also define $g^*u$ in the standard way. Put 
\[L^2_{(0,q)}(X)^G:=\set{u\in L^2_{(0,q)}(X);\, g^*u=u,\ \ \forall g\in G}\]
and 
\[({\rm Ker\,}\Box^q_b)^G:={\rm Ker\,}\Box^q_b\cap L^2_{(0,q)}(X)^G.\]
The $G$-invariant Szeg\H{o} projection is the orthogonal projection 
\[S^{(q)}_G:L^2_{(0,q)}(X)\To ({\rm Ker\,}\Box^q_b)^G\]
with respect to $(\,\cdot\,|\,\cdot\,)$. Let $S^{(q)}_G(x,y)\in\mathcal{D}'(X\times X,T^{*0,q}X\boxtimes(T^{*0,q}X)^*)$ be the distribution kernel of $S^{(q)}_G$. Let 
\[Q_G: L^2_{(0,q)}(X)\To L^2_{(0,q)}(X)^G\]
be the orthogonal projection with respect to $(\,\cdot\,|\,\cdot\,)$. Let
\begin{equation}\label{e-gue210729yyd}
		S^G_{\mp}:=Q_G\circ S_{\mp}: L^2_{(0,q)}(X)\To ({\rm Ker\,}\Box^q_b)^G\,,
\end{equation}
where $S_-$, $S_+$ are as in Theorem~\ref{t-gue161109I}. We have $S^{(q)}_G=S^G_-+S^G_+$. If $q\neq n_+$, then $S^G_+\equiv0$. Recall that we work with the assumption that $q=n_-$. 

\begin{defn}\label{d-gue210729yyd}
	Let $P\in L^{\ell}_{{\rm cl\,}}(X,T^{*0,q}X\boxtimes(T^{*0,q}X)^*)$, $\ell\in\mathbb Z$, $\ell\leq0$. We say that 
	$P$ is in $L^{\ell}_{{\rm cl\,}}(X,T^{*0,q}X\boxtimes(T^{*0,q}X)^*)^G$ if $g^*(Pu)=P(g^*u)$, for every $u\in L^2_{(0,q)}(X)$ and every $g\in G$. 
\end{defn}

\begin{defn}\label{d-gue210801yyd}
	Let $P\in L^\ell_{{\rm cl\,}}(X,T^{*0,q}X\boxtimes(T^{*0,q}X)^*)^G$ with scalar principal symbol, $\ell\leq0$, $\ell\in\mathbb Z$.
	The $G$-invariant Toeplitz operator is given by 
	\begin{equation}\label{e-gue210707ycdz}
			T^{(q)}_{P,G}:=S^{(q)}_G\circ P\circ S^{(q)}_G: L^2_{(0,q)}(X)\To({\rm Ker\,}\Box^q_b)^G,\,
	\end{equation}
	Let $f\in\mathcal{C}^\infty(X)^G$, we write $T^{(q)}_{f,G}:=T^{(q)}_{M_f,G}$.
\end{defn}

We assume that the action is free near $\mu^{-1}(0)$. Fix $x\in\mu^{-1}(0)$, consider the linear map 
\[
	R_x:\underline{\mathfrak{g}}_x\To\underline{\mathfrak{g}}_x,\,
	u\mapsto R_xu,\ \ \langle\,R_xu\,|\,v\,\rangle=\langle\,\mathrm{d}\omega_0(x)\,,\,Ju\wedge v\,\rangle.
\]
Let $\det R_x=\lambda_1(x)\cdots\lambda_d(x)$, where $\lambda_j(x)$, $j=1,2,\ldots,d$, are the eigenvalues of $R_x$. Fix $x\in\mu^{-1}(0)$, put $Y_x=\set{g\circ x;\, g\in G}$. $Y_x$ is a $d$-dimensional submanifold of $X$. The $G$-invariant Hermitian metric $\langle\,\cdot\,|\,\cdot\,\rangle$ induces a volume form $\mathrm{dV}_{Y_x}$ on $Y_x$. Put 
\begin{equation}\label{e-gue170108em}
	V_{{\rm eff\,}}(x):=\int_{Y_x}\mathrm{dV}_{Y_x}.
\end{equation} 

As before, for simplicity, until further notice, we always assume that $n_-\neq n_+$. 

We recall Theorem $4.1$ in \cite{gh}. 

\begin{thm} \label{thm:toeplitz}
	Let $P\in L^\ell_{{\rm cl\,}}(X,T^{*0,q}X\boxtimes(T^{*0,q}X)^*)^G$ with scalar principal symbol, $\ell\leq0$, $\ell\in\mathbb Z$.
	Let $D$ be an open set in $X$ such that the intersection $\mu^{-1}(0)\cap D= \emptyset$. Then $T^{(q)}_{P,G}\equiv 0$ on $D$.
	
	Let $p\in \mu^{-1}(0)$ and let $U$ be a local neighborhood of $p$ with local coordinates $(x_1,\dots\,x_{2n+1})$. Then, the distributional kernel of $T_{P,G}^{(q)}$ satisfies
	\begin{equation}\label{e-gue210802yyd}
		T_{P,G}^{(q)}(x,y)\equiv\int_0^{\infty} e^{\imath t\,\Phi_-( x,y)}a_{P}( x,\,y,\,t)\,\mathrm{d}t\ \ \ \mbox{on $U\times U$}.
	\end{equation}
	The phase $\Phi_-$ is described in local coordinates below and it is equal to the phase of $S_{-}^{G}(x,\,y)$ in~\cite{hsiaohuang}, the symbol 
	$a_{P}$ satisfies the following properties
	\[a_{P}(x,\,y,\,t)\sim \sum_{j=0}^{+\infty} a_{P}^{j}(x,\,y)\,t^{n-d/2-j}\]
	in $S^{n-d/2}_{1,0}(U\times U\times \mathbb{R},\,T^{*\,0,q}X\boxtimes(T^{*\,0,q}X)^*)$, 
	\[a_{P}^{j}(x,\,y)\in\mathcal{C}^{\infty}(U\times U,T^{*0,q}X\boxtimes(T^{*0,q}X)^*),\ \ j\in\mathbb N_0,\]
	and for every $x\in\mu^{-1}(0)$, 
	\begin{equation}\label{e-gue210802yydIm}
		a_{P}^{0}(x,\,x)=2^{d-1}\frac{1}{V_{{\rm eff\,}}(x)}\pi^{-n-1+\frac{d}{2}}\abs{\det R_x^{-\frac{1}{2}}}\abs{\det\mathcal{L}_{x}}\sigma^0_P(x,-\omega_0(x))\pi_{x,n_-}.	\end{equation}
\end{thm}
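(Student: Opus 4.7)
The strategy is to write $T^{(q)}_{P,G}=S^{(q)}_G\circ P\circ S^{(q)}_G$ as a composition of the $G$-invariant Szeg\H{o} projection with the pseudodifferential operator $P$, and then exploit the known Fourier integral description of $S^{(q)}_G$ together with the complex stationary phase theorem. The key input is the result from \cite{hsiaohuang}: $S^{(q)}_G$ is smoothing on $X\setminus\mu^{-1}(0)$, and for any $p\in\mu^{-1}(0)$ one has, in local coordinates on a neighborhood $U$ of $p$,
\[S^{(q)}_G(x,y)\equiv\int_0^{+\infty}e^{\imath t\,\Phi_-(x,y)}\,s^G_-(x,y,t)\,\mathrm{d}t,\qquad s^G_-\sim\sum_{j\geq0}s^{G,j}_-(x,y)\,t^{n-d/2-j},\]
with $s^{G,0}_-(x,x)$ given explicitly by the right-hand side of \eqref{e-gue210802yydIm} with the scalar factor $\sigma^0_P(x,-\omega_0(x))$ removed. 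Since $q=n_-\neq n_+$, the ``$+$'' part is smoothing and does not contribute.

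For the vanishing assertion, if $D\cap\mu^{-1}(0)=\emptyset$ and $\chi_1,\chi_2\in\mathcal{C}^\infty_c(D)$, then $\chi_1 S^{(q)}_G$ and $S^{(q)}_G\chi_2$ are both smoothing because $S^{(q)}_G(x,y)$ is $\mathcal{C}^\infty$ whenever $x$ or $y$ lies off $\mu^{-1}(0)$; consequently $\chi_1 T^{(q)}_{P,G}\chi_2$ is smoothing, giving $T^{(q)}_{P,G}\equiv 0$ on $D$.

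For the local representation near $p\in\mu^{-1}(0)$, I first compute $P\circ S^{(q)}_G$. Acting with the pseudodifferential operator $P$ in the $x$-variable on the oscillatory integral, standard symbol calculus yields another oscillatory integral with the same phase $\Phi_-$ whose leading amplitude reads $t^{\ell}\,\sigma^0_P\bigl(x,d_x\Phi_-(x,y)\bigr)\,s^G_-(x,y,t)$ modulo lower order terms. On the diagonal $d_x\Phi_-(x,x)=-\omega_0(x)$, so the leading coefficient is $\sigma^0_P(x,-\omega_0(x))\,s^{G,0}_-(x,x)$. Composing on the left with $S^{(q)}_G$ and invoking the Melin--Sj\"ostrand stationary phase theorem for complex phases---exactly as in the computation of $S^{(q)}_G\circ S^{(q)}_G\equiv S^{(q)}_G$ in \cite{hsiaohuang}---produces an oscillatory integral with phase $\Phi_-$ and classical amplitude whose leading diagonal value equals $\sigma^0_P(x,-\omega_0(x))\,s^{G,0}_-(x,x)$; substituting the known expression for $s^{G,0}_-$ yields \eqref{e-gue210802yydIm}.

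The chief obstacle is the complex stationary phase computation: the critical manifold of the composed phase fibers over $G$-orbits in $\mu^{-1}(0)$, and the Hessian in the transverse and orbit directions must produce the precise constant $2^{d-1}V_{\rm eff}(x)^{-1}\pi^{-n-1+d/2}\abs{\det R_x^{-\frac{1}{2}}}$. Rather than redoing the geometric integration from scratch, I would bootstrap from the idempotency $S^{(q)}_G\circ S^{(q)}_G\equiv S^{(q)}_G$ modulo smoothing: matching leading symbols forces the combinatorial factor above, so the calculation for $T^{(q)}_{P,G}$ reduces to inserting the scalar multiplier $\sigma^0_P(x,-\omega_0(x))$ into the known formula for $s^{G,0}_-(x,x)$, which finishes the proof.
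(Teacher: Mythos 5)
Your outline is correct and matches the method behind this result: the paper itself gives no proof here (it simply recalls the statement as Theorem 4.1 of \cite{gh}), and the argument there is exactly the one you describe --- the $G$-invariant Szeg\H{o} kernel asymptotics of \cite{hsiaohuang} combined with the Melin--Sj\"ostrand complex stationary phase calculus for compositions with phase $\Phi_-$, with the off-$\mu^{-1}(0)$ vanishing coming from the smoothing property of $S^{(q)}_G$ there. Your shortcut of fixing the overall constant via the idempotency $S^{(q)}_G\circ S^{(q)}_G\equiv S^{(q)}_G$ is legitimate and is the same normalization device used in the leading-symbol calculus of \cite{gh}.
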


We do make use of local coordinates defined in \cite{hsiaohuang} to write the phase in local coordinates, we will not recall it here explicitly, we refer to Theorem 3.7 in \cite{hsiaohuang}.
For convenience of the reader we just mention that the phase function $\Phi_-(x,y)\in\mathcal{C}^\infty(U\times U)$ is $G$-invariant which results in the fact that it is independent of $(x_1,\ldots,x_d)$ and $(y_1,\ldots,y_d)$. Hence, we have that $\Phi_-(x,y)=\Phi_-((0,x''),(0,y'')):=\Phi_-(x'',y'')$, $x''=(x_{d+1},\ldots,x_{2n+1})$,  $y''=(y_{d+1},\ldots,y_{2n+1})$. Moreover, there is a constant $c>0$ such that 
\begin{equation}\label{e-gue170126}
	{\rm Im\,}\Phi_-(x'',y'')\geq c\Bigr(\abs{\hat x''}^2+\abs{\hat y''}^2+\abs{\mathring{x}''-\mathring{y}''}^2\Bigr),\ \ \forall ((0,x''),(0,y''))\in U\times U,
\end{equation}
where $\hat x''=(x_{d+1},\ldots,x_{2d})$, $\hat y''=(y_{d+1},\ldots,y_{2d})$, $\mathring{x}''=(x_{d+1},\ldots,x_{2n})$, $\mathring{y}''=(y_{d+1},\ldots,y_{2n})$.

Let $P\in L^{0}_{{\rm cl\,}}(X,T^{*0,q}X\boxtimes(T^{*0,q}X)^*)^G$ be a $G$-invariant self-adjoint classical pseudodifferential operator on $X$ with scalar principal symbol. Then, it is not difficult to see that the Toepliotz operator $T^{(q)}_{P,G}$ is self-adjoint.  Let $\tau\in\mathcal{C}^\infty_c(\mathbb R)$. We have 
\begin{equation}\label{e-gue211212yydm}
\tau(T^{(q)}_{P,G})=\tau(T^{(q)}_{P,G})\circ Q_G.
\end{equation}
From Theorem~\ref{t-gue211211yyd}, \eqref{e-gue211212yyd} and \eqref{e-gue211212yydm}, we can repeat the proof of Theorem $4.1$ in \cite{gh} and get 

\begin{thm} \label{thm:toeplitz-m}
Recall that we work with the assumption that $n_-\neq n_+$. 
With the notations and assumptions above, let $\tau\in\mathcal{C}^\infty_c(\mathbb R)$ with $0\notin{\rm supp\,}\tau$. Let $P\in L^0_{{\rm cl\,}}(X,T^{*0,q}X\boxtimes(T^{*0,q}X)^*)^G$ with scalar principal symbol. 
	Let $D$ be an open set in $X$ such that the intersection $\mu^{-1}(0)\cap D= \emptyset$. Then $\tau(T^{(q)}_{P,G})\equiv 0$ on $D$.
	
	Let $p\in \mu^{-1}(0)$ and let $U$ be a local neighborhood of $p$ with local coordinates $(x_1,\dots\,x_{2n+1})$. Then, the distributional kernel of $\tau(T_{P,G}^{(q)})$ satisfies
	\begin{equation}\label{e-gue210802yydam}
	\tau(T_{P,G}^{(q)})(x,y)\equiv\int_0^{\infty} e^{\imath t\,\Phi_-( x,y)}a_{\tau}( x,\,y,\,t)\,\mathrm{d}t\ \ \ \mbox{on $U\times U$}.
	\end{equation}
	The phase $\Phi_-$ is as in Theorem~\ref{thm:toeplitz}, 
	$a_{\tau}$ satisfies the following properties
	\[a_{\tau}(x,\,y,\,t)\sim \sum_{j=0}^{+\infty} a_{\tau}^{j}(x,\,y)\,t^{n-d/2-j}\]
	in $S^{n-d/2}_{1,0}(U\times U\times \mathbb{R},\,T^{*\,0,q}X\boxtimes(T^{*\,0,q}X)^*)$, 
	\[a_{\tau}^{j}(x,\,y)\in\mathcal{C}^{\infty}(U\times U,T^{*0,q}X\boxtimes(T^{*0,q}X)^*),\ \ j\in\mathbb N_0,\]
	and for every $x\in\mu^{-1}(0)$, 
	\begin{equation}\label{e-gue210802yydIn}
		a_{\tau}^{0}(x,\,x)=2^{d-1}\frac{1}{V_{{\rm eff\,}}(x)}\pi^{-n-1+\frac{d}{2}}\abs{\det R_x^{-\frac{1}{2}}}\abs{\det\mathcal{L}_{x}}\tau(\sigma^0_P(x,-\omega_0(x)))\pi_{x,n_-}.	\end{equation}
\end{thm}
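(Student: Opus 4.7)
The strategy is to reduce the functional calculus of the $G$-invariant Toeplitz operator to the functional calculus of the non-invariant one, and then apply the averaging/stationary phase analysis that was used for Theorem~\ref{thm:toeplitz} in \cite{gh}. First I would exploit the $G$-invariance of $P$: since $P$ commutes with the $G$-action and $Q_G$ is the orthogonal projection onto $L^2_{(0,q)}(X)^G$, the operator $T^{(q)}_P$ commutes with $Q_G$, and $T^{(q)}_{P,G}=Q_G T^{(q)}_P Q_G$ coincides with the restriction of $T^{(q)}_P$ to $L^2_{(0,q)}(X)^G$. Spectral calculus is compatible with this restriction, so $\tau(T^{(q)}_{P,G})=Q_G\,\tau(T^{(q)}_P)\,Q_G$. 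Using the hypothesis $0\notin\mathrm{supp}\,\tau$, together with $\tau(T^{(q)}_P)=S^{(q)}\tau(T^{(q)}_P)S^{(q)}$ and $S^{(q)}_G=Q_G S^{(q)}=S^{(q)}Q_G$, this rewrites as
\[
\tau(T^{(q)}_{P,G})\equiv S^{(q)}_G\circ \tau(T^{(q)}_P)\circ S^{(q)}_G\mod \mathcal{C}^\infty.
\]

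Second, by Theorem~\ref{t-gue211211yyd} (applied in the regime $n_-\neq n_+$) the operator $A:=\tau(T^{(q)}_P)\in\Psi_n(X)$ is a complex Fourier integral operator of Szeg\H{o} type of order $n$, whose kernel on a chart $U$ can be written as $\int_0^\infty e^{it\varphi_-(x,y)}a(x,y,t)\,\mathrm{d}t$ with leading symbol
\[
\sigma^0_A(x,x)=\tfrac12\pi^{-n-1}|\det\mathcal L_x|\,\tau(\sigma^0_P(x,-\omega_0(x)))\,\pi_{x,n_-}.
\]
Now I can run the argument of Theorem~$4.1$ in \cite{gh} verbatim, replacing $P$ by $A$: the composition $S^{(q)}_G\circ A\circ S^{(q)}_G$ has kernel obtained by integrating $S^{(q)}_G(x,g_1\cdot u)\,A(u,v)\,S^{(q)}_G(v,g_2\cdot y)$ over $u,v,g_1,g_2$, or equivalently by inserting the group averaging formula $S^{(q)}_G(x,y)=\int_G (g^*\otimes \mathrm{id})S^{(q)}(x,g^{-1}\circ y)\,\mathrm{d}g$ (as in \cite{hsiaohuang}). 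The assumption $\mu^{-1}(0)\cap D=\emptyset$ kills the contribution over $D$ by the same momentum-map argument as in \cite{hsiaohuang,gh}: there are no real critical points of the group phase. Near a point $p\in\mu^{-1}(0)$, applying complex stationary phase of Melin--Sj\"ostrand in the group/transverse directions collapses the three Szeg\H{o}-type phases $\varphi_-$ into the single reduced phase $\Phi_-$ described in \cite{hsiaohuang}, yielding \eqref{e-gue210802yydam} with symbol $a_\tau\in S^{n-d/2}_{1,0}$.

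Finally, to obtain the formula \eqref{e-gue210802yydIn} for $a_\tau^0(x,x)$, I would track the leading term through the stationary phase step. In the analogous computation in \cite{gh} for $T^{(q)}_{P,G}$, the leading symbol on the diagonal is the product of the universal factor $2^{d-1}V_{{\rm eff\,}}(x)^{-1}\pi^{-n-1+d/2}|\det R_x^{-1/2}||\det\mathcal L_x|\pi_{x,n_-}$ coming purely from the Szeg\H{o} phase geometry, times $\sigma^0_P(x,-\omega_0(x))$ coming from the inserted pseudodifferential operator. When one instead inserts $A=\tau(T^{(q)}_P)$, Theorem~\ref{t-gue211211yyd} guarantees that the role of $\sigma^0_P(x,-\omega_0(x))$ is taken by $\tau(\sigma^0_P(x,-\omega_0(x)))$ (at the leading order, evaluated on the diagonal, where $\pi_{x,n_-}$ is the canonical projection). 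Substituting this into the formula from Theorem~\ref{thm:toeplitz} gives precisely \eqref{e-gue210802yydIn}.

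The main obstacle is purely notational/bookkeeping: one must verify that the stationary phase reduction of \cite{hsiaohuang,gh}, originally set up for $S^{(q)}_G\circ P\circ S^{(q)}_G$ with a classical pseudodifferential $P$, goes through unchanged when $P$ is replaced by a complex Fourier integral operator of Szeg\H{o} type in $\Psi_n(X)$. This works because such an operator is still of the Szeg\H{o}-kernel form with the phase $\varphi_-$, so the composition $S^{(q)}_G\circ A\circ S^{(q)}_G$ fits the same three-phase stationary-phase scheme; and because only the diagonal value of the leading symbol of $A$ enters into the diagonal value of the leading symbol of the reduced kernel, the substitution $\sigma^0_P\mapsto \tau(\sigma^0_P)$ is clean. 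Once this compatibility is checked, the remaining steps are direct transcriptions of the corresponding steps in Theorem~$4.1$ of \cite{gh}.
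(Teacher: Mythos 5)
Your proposal is correct and follows essentially the same route as the paper: the paper's argument is precisely to note that the functional calculus factors through $Q_G$ (its equation $\tau(T^{(q)}_{P,G})=\tau(T^{(q)}_{P,G})\circ Q_G$), to invoke Theorem~\ref{t-gue211211yyd} identifying $\tau(T^{(q)}_P)$ as a Szeg\H{o}-type FIO of order $n$ with leading symbol $\tfrac12\pi^{-n-1}\abs{\det\mathcal L_x}\,\tau(\sigma^0_P(x,-\omega_0(x)))\pi_{x,n_-}$, and then to repeat the group-averaging/stationary-phase proof of Theorem~4.1 of \cite{gh} with that operator in place of $P$. Your version simply makes explicit the intermediate identity $\tau(T^{(q)}_{P,G})=Q_G\,\tau(T^{(q)}_P)\,Q_G$ (valid because $P$ is $G$-invariant and $0\notin{\rm supp\,}\tau$), which is exactly what the paper leaves implicit.
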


Similarly, if $n_-=n_+$, we have 

\begin{thm} \label{thm:toeplitz-n}
With the notations and assumptions above, assume that $n_-=n_+$. Let $\tau\in\mathcal{C}^\infty_c(\mathbb R)$ with $0\notin{\rm supp\,}\tau$. Let $P\in L^0_{{\rm cl\,}}(X,T^{*0,q}X\boxtimes(T^{*0,q}X)^*)^G$ with scalar principal symbol. 
	Let $D$ be an open set in $X$ such that the intersection $\mu^{-1}(0)\cap D= \emptyset$. Then $\tau(T^{(q)}_{P,G})\equiv 0$ on $D$.
	
	Let $p\in \mu^{-1}(0)$ and let $U$ be a local neighborhood of $p$ with local coordinates $(x_1,\dots\,x_{2n+1})$. Then, the distributional kernel of $\tau(T_{P,G}^{(q)})$ satisfies
	\begin{equation}\label{e-gue210802yyda}
	\tau(T_{P,G}^{(q)})(x,y)\equiv\int_0^{\infty} e^{\imath t\,\Phi_-( x,y)}a_{\tau,-}( x,\,y,\,t)\mathrm{d}t+\int_0^{\infty} e^{\imath t\,\Phi_+( x,y)}a_{\tau,+}( x,\,y,\,t\,)\mathrm{d}t\ \ \ \mbox{on $U\times U$}.
	\end{equation}
	The phases $\Phi_{\mp}$ are equal to the phases of $S_{\mp}^{G}(x,\,y)$ in~\cite{hsiaohuang}, 
	$a_{\tau,\mp}$ satisfies the following properties
	\[a_{\tau,\,p}(x,\,y,\,t)\sim \sum_{j=0}^{+\infty} a_{\tau,\mp}^{j}(x,\,y)\,t^{n-d/2-j}\]
	in $S^{n-d/2}_{1,0}(U\times U\times \mathbb{R},\,T^{*\,0,q}X\boxtimes(T^{*\,0,q}X)^*)$, 
	\[a_{\tau,\mp}^{j}(x,\,y)\in\mathcal{C}^{\infty}(U\times U,T^{*0,q}X\boxtimes(T^{*0,q}X)^*),\ \ j\in\mathbb N_0,\]
	and for every $x\in\mu^{-1}(0)$, 
	\begin{equation}\label{e-gue210802yydI}\begin{split}
		&a_{\tau,-}^{0}(x,\,x)=2^{d-1}\frac{1}{V_{{\rm eff\,}}(x)}\pi^{-n-1+\frac{d}{2}}\abs{\det R_x^{-\frac{1}{2}}}\abs{\det\mathcal{L}_{x}}\tau(\sigma^0_P(x,-\omega_0(x)))\pi_{x,n_-},\\
		&a_{\tau,+}^{0}(x,\,x)=2^{d-1}\frac{1}{V_{{\rm eff\,}}(x)}\pi^{-n-1+\frac{d}{2}}\abs{\det R_x^{-\frac{1}{2}}}\abs{\det\mathcal{L}_{x}}\tau(\sigma^0_P(x,\omega_0(x)))\pi_{x,n_+}.
		\end{split}
			\end{equation}
\end{thm}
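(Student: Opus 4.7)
The plan is to mimic the proof strategy used for Theorem~\ref{thm:toeplitz-m}, but now keep track of both the negative and positive Szeg\H{o} type components, which are both non-trivial when $q=n_-=n_+$. The starting point is the identity
\[\tau(T^{(q)}_{P,G})=Q_G\circ\tau(T^{(q)}_P)\circ Q_G,\]
which follows from \eqref{e-gue211212yydm} together with $G$-invariance of $P$ and of the Szeg\H{o} projection $S^{(q)}$ (so that $Q_G$ commutes with $T^{(q)}_P$ up to smoothing, allowing us to insert $Q_G$ on both sides).

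First, I would invoke Theorem~\ref{t-gue211211yyda} to write, locally on a coordinate patch $D$,
\[\tau(T^{(q)}_P)(x,y)\equiv\int_0^\infty e^{i\varphi_-(x,y)t}\,\alpha_-(x,y,t)\,\mathrm{d}t+\int_0^\infty e^{i\varphi_+(x,y)t}\,\alpha_+(x,y,t)\,\mathrm{d}t,\]
with $\alpha_\mp\in S^{n}_{{\rm cl\,}}$ whose leading terms on the diagonal are given by \eqref{e-gue211212ycd}. This splits the analysis into two independent pieces corresponding to the phases $\varphi_-$ and $\varphi_+$.

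Next, I would apply $Q_G$ to each of the two pieces separately. This is precisely the situation handled in \cite{hsiaohuang} and in the proof of Theorem 4.1 of \cite{gh}: one writes
\[Q_G\,K\,Q_G(x,y)=\int_{G\times G}K(g\circ x,\,h\circ y)\,\mathrm{d}\mu(g)\,\mathrm{d}\mu(h),\]
then substitutes the oscillatory representation of each piece and performs the complex stationary phase of Melin--Sj\"ostrand in the $G$-directions. For the $\varphi_-$-piece, this reproduces the phase $\Phi_-$ and the symbol class $S^{n-d/2}_{1,0}$ exactly as in the $n_-\neq n_+$ case, so the leading term formula \eqref{e-gue210802yydI} for $a^0_{\tau,-}$ is obtained by exactly the computation carried out for \eqref{e-gue210802yydIn}, with $\sigma^0_P(x,-\omega_0(x))$ replaced by $\tau(\sigma^0_P(x,-\omega_0(x)))$ thanks to \eqref{e-gue211212ycd}. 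For the $\varphi_+$-piece the same stationary phase produces the phase $\Phi_+$ and a symbol of the same order with leading term involving $\tau(\sigma^0_P(x,\omega_0(x)))\pi_{x,n_+}$, again by the same computation. The vanishing away from $\mu^{-1}(0)$ follows from the absence of critical points of the phases in the $G$-direction when $x\notin\mu^{-1}(0)$.

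The main technical step — which is really the only non-routine piece — is to verify that, after applying $Q_G$ to the $\varphi_+$-piece, the stationary phase computation in the $G$-directions produces exactly the normalization $2^{d-1}V_{{\rm eff}}(x)^{-1}\pi^{-n-1+d/2}|\det R_x^{-1/2}||\det\mathcal{L}_x|$, identical to the one appearing for the $\varphi_-$-piece. This is not automatic in principle because the phases $\varphi_-$ and $\varphi_+$ have opposite signs on the Reeb direction and different Hessian structure, but the required identity is already implicit in \cite{hsiaohuang,gh}: the Hessian of $\Phi_\pm$ restricted to the $G$-orbit direction at a point of $\mu^{-1}(0)$ gives the same modulus factor $|\det R_x|^{1/2}$ in both cases because the moment map condition makes this Hessian symmetric in the sign of the Reeb direction. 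Once this is checked, the two leading term formulas in \eqref{e-gue210802yydI} follow and the theorem is proved.
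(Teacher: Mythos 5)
Your proposal matches the paper's (very terse) argument: the paper proves this theorem simply by the remark ``Similarly, if $n_-=n_+$\dots'', i.e.\ by invoking the two-phase functional calculus Theorem~\ref{t-gue211211yyda} in place of Theorem~\ref{t-gue211211yyd} and repeating the $G$-averaging/stationary-phase computation of Theorem~4.1 of \cite{gh} for each of the $\varphi_-$ and $\varphi_+$ pieces, which is exactly what you do (and your identity $\tau(T^{(q)}_{P,G})=Q_G\tau(T^{(q)}_P)Q_G$ holds exactly, not just up to smoothing, since $Q_G$ commutes with $S^{(q)}$ and $P$). The normalization issue you flag for the $+$-phase is already settled in \cite{hsiaohuang}, where $S^G_+$ is shown to have the same leading coefficient with $\pi_{x,n_+}$ in place of $\pi_{x,n_-}$, so no new verification is needed.
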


\section{Quantization commutes with reduction for Toeplitz operators}\label{s-gue211213yyds}

We will use the same notations and assumptions as Section~\ref{s-gue211213yyd}. Let $X_G:=\mu^{-1}(0)/G$. In \cite{hsiaohuang}, it was proved that $X_{G}$ is a CR manifold with natural CR structure induced by $T^{1,0}X$ of dimension $2n - 2d + 1$.
 Let $P\in L^0_{{\rm cl\,}}(X,T^{*0,q}X\boxtimes(T^{*0,q}X)^*)^G$ be a scalar pseudodifferential operator. The operator $P$ gives rise to ${P}_{X_G}$ on the CR reduction $X_G=\mu^{-1}({0})/G$. The Hermitian metric $\langle\,\cdot\,|\,\cdot\,\rangle$ on $TX\otimes\mathbb{C}$ induces an Hermitian metric $\langle\,\cdot\,|\,\cdot\,\rangle_{X_G}$ on $TX_G\otimes\mathbb{C}$ and let $(\,\cdot\,|\,\cdot\,)_{X_G}$ be the $L^2$ inner product on $L^2_{(0,q)}(X_G)$ induced by $\langle\,\cdot\,|\,\cdot\,\rangle_{X_G}$. We write $\mathrm{dV}_{X_G}$ to denote the volume form on $X_G$ induced by $\langle\,\cdot\,|\,\cdot\,\rangle_{X_G}$. Let $\Box ^{q}_{b,X_G}$ be the Kohn Laplacian for $(0,q)$ forms on $X_G$ defined with respect to $\langle\,\cdot\,|\,\cdot\,\rangle_{X_G}$ and $(\,\cdot\,|\,\cdot\,)_{X_G}$. 
 We can define the standard Toeplitz operator associated to ${P}_{X_G}$:
\[T^{(q)}_{P_{X_G}}:= S^{(q)}_{X_G}\circ {P}_{X_G} \circ S^{(q)}_{X_G}:L^2_{(0,q)}(X_G)\To L^2_{(0,q)}(X_G),\]
where $S^{(q)}_{X_G}:L^2_{(0,q)}(X_G)\rightarrow \mathrm{Ker}\,\Box ^{q}_{b,X_G}$ is the Szeg\H{o} projection on $X_G$.  

Let $\mathcal{L}_{X_{G}}$ be the Levi form on $X_{G}$ induced naturally from the Levi form $\mathcal{L}$ on $X$. Since $\mathcal{L}$ is non-degenerate of constant signature, $\mathcal{L}_{X_{G}}$ is also non-degenerate of constant signature. We denote by $n_{X_G,-}$ (respectively $n_{X_G,+}$) the number of negative (respectively positive) eigenvalues of $\mathcal{L}_{X_G}$. 
For simplicity, until further notice, we assume that $n_{X_G,-}\neq n_{X_G,+}$. We introduce the map $\sigma$. 
Since $\underline{\mathfrak{g}}_x$ is orthogonal to $H_xY\cap JH_xY$ and $H_xY\cap JH_xY\subset\underline{\mathfrak{g}}^{\perp_b}_x$, we can find a $G$-invariant orthonormal basis $\set{Z_1,\ldots,Z_n}$ of $T^{1,0}X$ on $Y$ such that 
\[
\renewcommand{\arraystretch}{1.2}
\begin{array}{ll}
&\mathcal{L}_x(Z_j(x),\ol Z_k(x))=\delta_{j,k}\lambda_j(x),\ \ j,k=1,\ldots,n\,,\\
&Z_j(x)\in\underline{\mathfrak{g}}_x+iJ\underline{\mathfrak{g}}_x,\ \ j=1,2,\ldots,d
\end{array}
\]
and
\[Z_j(x)\in H_xY\otimes \mathbb{C}\cap J( H_xY\otimes \mathbb{C}),\ \ j=d+1,\ldots,n\,.
\]
Let $\set{e_1,\ldots,e_n}$ denote the orthonormal basis of $T^{*0,1}X$ on $Y$, dual to $\set{\ol Z_1,\ldots,\ol Z_n}$. Fix $s=0,1,2,\ldots,n-d$. For $x\in Y$, put 
\[
B^{*0,s}_xX=\left\{ \sum_{d+1\leq j_1<\cdots<j_s\leq n}a_{j_1,\ldots,j_s}e_{j_1}\wedge\cdots\wedge e_{j_s};\, a_{j_1,\ldots,j_s}\in\Complex,\ \forall d+1\leq j_1<\cdots<j_s\leq n \right\}
\]
and let $B^{*0,s}X$ be the vector bundle of $Y$ with fiber $B^{*0,s}_xX$, $x\in Y$. Let $\mathcal{C}^\infty(Y,B^{*0,s}X)^G$ denote the set of all $G$-invariant sections of $Y$ with values in $B^{*0,s}X$. Let 
\[
\iota_G:\mathcal{C}^\infty(Y,B^{*0,s}X)^G\To\Omega^{0,s}(Y_G)
\]
be the natural identification. 

Assume that $\lambda_1<0,\ldots,\lambda_r<0$, and $\lambda_{d+1}<0,\ldots,\lambda_{n_--r+d}<0$, where $n_--r=n_{X_G,-}$. For $x\in Y$, put 
\[
\hat{\mathcal{N}}(x,n_-)=\set{ce_{d+1}\wedge\cdots\wedge e_{n_--r+d};\, c\in\mathbb C},
\]
and let 
\[
\renewcommand{\arraystretch}{1.2}
\begin{array}{c}
\hat p_{-}=\hat p_{x,-}:\mathcal{N}(x,n_-)\To \hat{\mathcal{N}}(x,n_-),\\
u=ce_1\wedge\cdots\wedge e_r\wedge e_{d+1}\wedge\cdots\wedge e_{n_--r+d}\To ce_{d+1}\wedge\cdots\wedge e_{n_--r+d}.
\end{array}
\]
We can define $\hat p_+$ in the similar way. 
Let $\iota:Y\To X$ be the natural inclusion and let $\iota^*:\Omega^{0,q}(X)\To\Omega^{0,q}(Y)$ be the pull-back of $\iota$. 
Let 
\[
f(x)=\sqrt{V_{{\rm eff\,}}(x)}\abs{\det\,R_x}^{-\frac{1}{4}}\in\mathcal{C}^\infty(Y)^G.
\]
Let $E\in L^{-\frac{d}{4}}_{{\rm cl\,}}(X_G,T^{*0,n_{X_G,-}}X_G\boxtimes(T^{*0,n_{X_G,-}}X_G)^*)$ be a scalar elliptic pseudodifferential operator with principal symbol 
$\sigma^0_E(x,\xi)=\abs{\xi}^{-\frac{d}{4}}$. Let $\tau\in\mathcal{C}^\infty_c(\mathbb R)$. 
Let
\begin{equation}\label{e-gue211213yyda}
\begin{split}
\sigma: E_\tau(T^{(n_-)}_{P,G})\cap\Omega^{0,n_-}(X)&\To E_\tau(T^{(n_{X_G,-})}_{P_{X_G}})\cap\Omega^{0,n_{X_G,-}}(X_G),\\
u&\To  \tau(T^{(n_{X_G,-})}_{P_{X_G}})\circ E\circ\iota_G\circ \hat p_-\circ\pi_{x,n_-}\circ f\circ \iota^*\circ u.
\end{split}\end{equation}
We can extend $\sigma$ to $\Omega^{0,n_-}(X)$ by 
\begin{equation}\label{e-gue180308II}
\begin{split}
\sigma: \Omega^{0,n_-}(X)&\To E_\tau(T^{(n_{X_G,-})}_{P_{X_G}})
\cap\Omega^{0,n_{X_G,-}}(X_G),\\
u&\To\tau(T^{(n_{X_G,-})}_{P_{X_G}})\circ E\circ\iota_G\circ \hat p_-\circ\pi_{x,n_-}\circ f\circ \iota^*\circ\tau(T^{(n_-)}_{P,G})u.\,.
\end{split}
\end{equation}
Let $\sigma^{\,*}: \Omega^{0,n_{X_G,-}}(X_{G})\To \mathcal{D}'(X,T^{*0,n_-}X)$ be
the formal adjoint of $\sigma$. From Theorem~\ref{t-gue211211yyd} and Theorem~\ref{thm:toeplitz-m}, we can repeat the method in Section 4.2 of~\cite{hmm} and deduce 
that

\begin{thm}\label{t-gue211213yyd}
With the same notations and assumptions above, suppose that $\Box^{n_{X_G,-}}_b$ has closed range. Let $\tau\in\mathcal{C}^\infty_c(\mathbb R)$ with $0\notin{\rm supp\,}\tau$. Then, $\sigma^{\,*}$ maps $\Omega^{0,n_{X_G,-}}(X_{G})$ continuously to $\Omega^{0,n_-}(X)$. Hence, $\sigma^*\,\sigma$ is a well-defined continuous operator 
\[\sigma^*\,\sigma: \Omega^{0,n_-}(X)\To\Omega^{0,n_-}(X).\]

Furthermore, the distribution kernel of $\sigma^*\,\sigma$ satisfies 
\begin{equation}\label{e-gue211213yydp}
\sigma^*\,\sigma=\tau(T^{(q)}_{P,G})^2+\Gamma,
\end{equation}
where $\Gamma: \Omega^{0,n_-}(X)\To\Omega^{0,n_-}(X)$ is a continuous map and the distribution kernel of $\Gamma$ satisfies the following: Let $D$ be an open set in $X$ such that the intersection $\mu^{-1}(0)\cap D= \emptyset$. Then $\Gamma\equiv 0$ on $D$. Let $p\in \mu^{-1}(0)$ and let $U$ be a local neighborhood of $p$ with local coordinates $(x_1,\dots\,x_{2n+1})$. Then, the distributional kernel of $\Gamma$ satisfies
	\begin{equation}\label{e-gue211213yydq}
	\Gamma(x,y)\equiv\int_0^{\infty} e^{\imath t\,\Phi_-( x,y)}r( x,\,y,\,t)\,\mathrm{d}t\ \ \ \mbox{on $U\times U$}.
	\end{equation}
	The phase $\Phi_-$ is as in Theorem~\ref{thm:toeplitz}, 
	$r$ satisfies the following properties
	\[r(x,\,y,\,t)\sim \sum_{j=0}^{+\infty} r_j(x,\,y)\,t^{n-d/2-j}\]
	in $S^{n-d/2}_{1,0}(U\times U\times \mathbb{R},\,T^{*\,0,q}X\boxtimes(T^{*\,0,q}X)^*)$, 
	\[r_j(x,\,y)\in\mathcal{C}^{\infty}(U\times U,T^{*0,q}X\boxtimes(T^{*0,q}X)^*),\ \ j\in\mathbb N_0,\]
	and for every $x\in\mu^{-1}(0)$, $r_0(x,x)=0$. 
\end{thm}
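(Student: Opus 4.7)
The strategy is to follow the adjoint-based argument from Section~4.2 of~\cite{hmm}, adapted to the functional-calculus setting provided by Theorem~\ref{thm:toeplitz-m} and Theorem~\ref{t-gue211211yyd}. First, I would check that each factor in the definition~\eqref{e-gue180308II} of $\sigma$ admits a continuous formal $L^2$-adjoint: $\tau(T^{(n_{X_G,-})}_{P_{X_G}})$ and $\tau(T^{(n_-)}_{P,G})$ are complex Fourier integral operators of Szeg\H{o} type (the first on $X_G$, the second on $X$), hence continuous on smooth sections; $E$ is a classical pseudodifferential operator; $\iota_G$, $\hat{p}_-$, $\pi_{x,n_-}$ are fibre-wise linear maps; $f$ is smooth multiplication on $Y$; and $\iota^*$ is the trace along $Y\hookrightarrow X$, whose adjoint $(\iota^*)^*$ is the push-forward extension by a distribution normal to $Y$. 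Composing these adjoints in reverse order yields the continuity of $\sigma^*\colon\Omega^{0,n_{X_G,-}}(X_{G})\to\Omega^{0,n_-}(X)$.

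Since $T^{(n_-)}_{P,G}$ and $T^{(n_{X_G,-})}_{P_{X_G}}$ are self-adjoint and we may assume $\tau$ is real, the corresponding functional-calculus operators are self-adjoint, and hence
\[\sigma^*\sigma=\tau(T^{(n_-)}_{P,G})\circ A\circ\tau(T^{(n_-)}_{P,G}),\]
where
\[A:=(\iota^*)^*\circ f\circ\pi_{x,n_-}\circ\hat{p}_-^*\circ\iota_G^*\circ E^*\circ\tau(T^{(n_{X_G,-})}_{P_{X_G}})^2\circ E\circ\iota_G\circ\hat{p}_-\circ\pi_{x,n_-}\circ f\circ\iota^*.\]
The central step is to identify $\sigma^*\sigma$ with $\tau(T^{(q)}_{P,G})^2$ up to a complex Fourier integral operator of Szeg\H{o} type whose leading diagonal symbol vanishes on $\mu^{-1}(0)$. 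In the local coordinates adapted to $\mu^{-1}(0)$ used in Theorem~3.7 of~\cite{hsiaohuang}, both $\sigma^*\sigma$ and $\tau(T^{(q)}_{P,G})^2$ are represented by oscillatory integrals with the same $G$-invariant phase $\Phi_-$: for $\tau(T^{(q)}_{P,G})^2$ by Theorem~\ref{thm:toeplitz-m} together with the Melin--Sj\"ostrand composition calculus, and for $\sigma^*\sigma$ by the same calculus, since the intrinsic phase of $\tau(T^{(n_{X_G,-})}_{P_{X_G}})^2$ on $X_G$ lifts to $\Phi_-$ under the trace and pushforward encoded in $\iota^*$ and $\iota_G$.

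The principal obstacle, and technical heart of the argument, is the stationary-phase bookkeeping for the leading symbol of $A$ on the diagonal over $\mu^{-1}(0)$. One must track the factor $2^{d-1}V_{\mathrm{eff}}(x)^{-1}\pi^{-n-1+d/2}|\det R_x^{-1/2}|$ entering through each occurrence of $\tau(T^{(n_{X_G,-})}_{P_{X_G}})$ from~\eqref{e-gue210802yydIn}, the order $-d/4$ contributed by each copy of $E$, the two multiplications by $f=\sqrt{V_{\mathrm{eff}}}\,|\det R_x|^{-1/4}$, and the composition $\pi_{x,n_-}\circ\hat{p}_-^*\circ\hat{p}_-\circ\pi_{x,n_-}$ combined with the identification $\iota_G$. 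Exactly as in Section~4.2 of~\cite{hmm}, these pieces are engineered to cancel precisely so that, when combined with the symbol formula~\eqref{e-gue211212yyd} for $\tau(T^{(q)}_{P,G})$, one obtains equality of leading diagonal symbols at every $x\in\mu^{-1}(0)$. Setting $\Gamma:=\sigma^*\sigma-\tau(T^{(q)}_{P,G})^2$, the localisation statement of Theorem~\ref{thm:toeplitz-m} forces $\Gamma\equiv 0$ on any open set disjoint from $\mu^{-1}(0)$, while the matching of leading diagonal symbols forces $r_0(x,x)=0$ in the local representation~\eqref{e-gue211213yydq}, yielding precisely the claimed form of $\Gamma$.
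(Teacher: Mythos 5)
Your proposal is correct and follows essentially the same route as the paper: the paper's own proof of Theorem~\ref{t-gue211213yyd} consists precisely of invoking Theorem~\ref{t-gue211211yyd} and Theorem~\ref{thm:toeplitz-m} and repeating the adjoint/stationary-phase argument of Section~4.2 of~\cite{hmm}, with the functional-calculus operators $\tau(T^{(n_-)}_{P,G})$ and $\tau(T^{(n_{X_G,-})}_{P_{X_G}})$ in place of the Szeg\H{o} projections. Your write-up in fact supplies more detail (the explicit factorization of $\sigma^*\sigma$ and the bookkeeping of the factors $V_{\rm eff}$, $\abs{\det R_x}$, $f$ and the order $-d/4$ of $E$) than the paper records.
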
 

Since the leading term of $\Gamma$ vanishes, we can repeat the method in  Section 4.1 of~\cite{hmm} and deduce 
that

\begin{lem}\label{l-gue211213ycd}
With the same notations and assumptions above, suppose that $\Box^{n_{X_G,-}}_b$ has closed range. Let $\Gamma$ be as in \eqref{e-gue211213yydp}. There is a $\varepsilon>0$ such that $\Gamma$ is continuous: 
\[\Gamma: H^s(X,T^{*0,n_-}X)\To H^{s+\varepsilon}(X,T^{*0,n_-}X),\]
for every $s\in\mathbb Z$. 
\end{lem}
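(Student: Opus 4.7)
The plan is to localize the kernel of $\Gamma$ near $\mu^{-1}(0)$ and then exploit the vanishing $r_0(x,x)=0$ along the diagonal of $\mu^{-1}(0)$ to extract a gain of a positive amount $\varepsilon>0$ in Sobolev regularity. This mirrors the symbol-reduction argument of Section~4.1 in~\cite{hmm}, which can be transcribed to the CR setting at hand.

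First I would localize. Since the kernel of $\Gamma$ is smooth off any neighbourhood of $\mu^{-1}(0)$, a partition of unity reduces everything to estimates on a small coordinate patch $U$ around a point $p\in\mu^{-1}(0)$, using the coordinates of Theorem~\ref{thm:toeplitz} in which $\mu^{-1}(0)\cap U=\{\hat x''=0\}$, the phase $\Phi_-$ is $G$-invariant and independent of $(x',y')$, and the Gaussian lower bound~\eqref{e-gue170126} is available.

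Next I would exploit the vanishing of $r_0$ on the diagonal of $\mu^{-1}(0)$. A Taylor expansion in the transversal directions yields smooth coefficients such that
\[
r_0(x,y)=\sum_{j=d+1}^{2d}\bigl(\hat x_j\,\alpha_j(x,y)+\hat y_j\,\beta_j(x,y)\bigr)+\sum_{j=d+1}^{2n}(\mathring x_j-\mathring y_j)\,\gamma_j(x,y).
\]
In the oscillatory integral~\eqref{e-gue211213yydq} each factor $\hat x_j$, $\hat y_j$ or $\mathring x_j-\mathring y_j$ is controlled by the Gaussian $e^{-ct(|\hat x''|^2+|\hat y''|^2+|\mathring x''-\mathring y''|^2)}$ that follows from~\eqref{e-gue170126}, which heuristically trades it for a factor $t^{-1/2}$. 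I would formalise this either by repeated integration by parts in $t$ against $\partial_t e^{it\Phi_-}=i\Phi_- e^{it\Phi_-}$ (combined with the almost-analytic extension $\widetilde{\Phi}_-$ to absorb the non-stationary imaginary phase), or by Melin--Sj\"ostrand complex stationary phase, to rewrite
\[
\Gamma(x,y)\equiv\int_0^{\infty}e^{it\Phi_-(x,y)}\,\tilde r(x,y,t)\,\mathrm dt\quad\text{on }U\times U,\qquad \tilde r\in S^{n-d/2-\varepsilon}_{1,0},
\]
for some $\varepsilon>0$ (one expects $\varepsilon=1/2$), modulo a smoothing operator.

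Finally, since $G$-invariant Szeg\H{o}-type Fourier integral operators with symbol in $S^{n-d/2}_{1,0}$ are $L^2$-bounded (as shown by Theorem~\ref{thm:toeplitz} together with the arguments used in Lemma~\ref{l-gue211205yydI} and in~\cite{hmm}), lowering the symbol order by $\varepsilon$ lowers the order on the Sobolev scale by $\varepsilon$; hence $\Gamma$ extends continuously to $H^s(X,T^{*0,n_-}X)\to H^{s+\varepsilon}(X,T^{*0,n_-}X)$ for every $s\in\mathbb Z$, as required. The main obstacle is the symbol reduction from order $n-d/2$ to $n-d/2-\varepsilon$: one must carry out the integration by parts against a complex phase uniformly in $t$ and near the degenerate locus $\hat x''=\hat y''=0$, while keeping careful track of the action on the bundle factors $T^{*0,q}X\boxtimes(T^{*0,q}X)^*$. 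This is exactly the technical core of the construction in Section~4.1 of~\cite{hmm}, and its adaptation to the present phase functions of Theorem~\ref{thm:toeplitz} is what makes the proof work.
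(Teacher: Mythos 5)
Your proposal takes essentially the same route as the paper: the paper's entire argument for this lemma consists of observing that the leading term $r_0$ of $\Gamma$ vanishes on the diagonal of $\mu^{-1}(0)$ and invoking the method of Section~4.1 of~\cite{hmm}, which is exactly the localization-plus-symbol-order-reduction you sketch, and your identification of the gain $\varepsilon=1/2$ and of the $L^2$-boundedness input is consistent with that. One imprecision worth fixing: since $r_0$ is only known to vanish on $\set{x=y,\ \hat x''=0}$, the Taylor expansion must also carry a term $(x_{2n+1}-y_{2n+1})\,\gamma_{2n+1}(x,y)$, which is not controlled by the Gaussian bound \eqref{e-gue170126} and is precisely the term requiring the integration by parts in $t$ you mention (writing $x_{2n+1}-y_{2n+1}=-\Phi_-+(\text{terms dominated by }{\rm Im\,}\Phi_-)$ and using $\Phi_-e^{it\Phi_-}=-i\pr_te^{it\Phi_-}$), while the potential terms in the group directions $(x_j-y_j)$, $j\leq d$, against which the phase neither oscillates nor decays, are eliminated by noting that the doubly $G$-averaged kernel is independent of $(x_1,\ldots,x_d)$ and $(y_1,\ldots,y_d)$ in the coordinates of Theorem~\ref{thm:toeplitz}.
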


From \eqref{e-gue211213yydp} and Lemma~\ref{l-gue211213ycd}, we see that $\sigma$ can be extended to $E_\tau(T^{(n_-)}_{P,G})$ by density and thus, 
\begin{equation}\label{e-gue211213yydr}
\sigma: E_\tau(T^{(n_-)}_{P,G})\To E_\tau(T^{(n_{X_G,-})}_{P_{X_G}}).
\end{equation} 

We can now prove

\begin{thm}\label{t-gue211213yydI}
With the same notations and assumptions above, suppose that $\Box^{n_{X_G,-}}_b$ has closed range. Let $\tau\in\mathcal{C}^\infty_c(\mathbb R)$ with $0\notin{\rm supp\,}\tau$.
The map $\sigma$  given by \eqref{e-gue180308II} and \eqref{e-gue211213yydr} has the following properties: (i) For every open bounded interval $I$, $0\notin\overline{I}$,  with $\tau\equiv1$ on $I$, we have that ${\rm Ker\,}\sigma\cap E_{I}(T^{(n_-)}_{P,G})$ is a finite dimensional subspace of $\Omega^{0,n_-}(X)$. (ii) For every open bounded interval $I$, $0\notin\overline{I}$, with $\tau\equiv1$ on $I$, we have that ${\rm Coker\,}\sigma\cap E_{I}(T^{(n_{X_G,-})}_{P_{X_G}})$ is a finite dimensional subspace of $\Omega^{0,n_{X_G,-}}(X_G)$.
\end{thm}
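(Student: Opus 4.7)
The plan is to exploit the factorization $\sigma^*\,\sigma = \tau(T^{(n_-)}_{P,G})^2 + \Gamma$ from Theorem~\ref{t-gue211213yyd}, together with the smoothing estimate of Lemma~\ref{l-gue211213ycd}, to reduce both (i) and (ii) to standard Fredholm arguments for $I+K$ with $K$ compact.

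First I would handle part (i). Fix $u\in{\rm Ker\,}\sigma\cap E_{I}(T^{(n_-)}_{P,G})$. Since $\tau\equiv1$ on $I$, the functional calculus gives $\tau(T^{(n_-)}_{P,G})u=u$ and hence $\tau(T^{(n_-)}_{P,G})^2 u=u$. Applying \eqref{e-gue211213yydp} yields
\[
0=\sigma^*\sigma u=u+\Gamma u,\qquad\text{i.e.}\qquad (I+\Gamma)u=0.
\]
By Lemma~\ref{l-gue211213ycd}, $\Gamma$ maps $H^s(X,T^{*0,n_-}X)\to H^{s+\varepsilon}(X,T^{*0,n_-}X)$ for some $\varepsilon>0$, and the Rellich embedding on the compact manifold $X$ makes $\Gamma$ compact on $L^2_{(0,n_-)}(X)$. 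Hence $I+\Gamma$ is Fredholm and its kernel is finite dimensional, giving ${\rm dim}\,{\rm Ker\,}\sigma\cap E_{I}(T^{(n_-)}_{P,G})<\infty$. Smoothness of the elements follows by iterating the relation $u=-\Gamma u$: each application of Lemma~\ref{l-gue211213ycd} gains $\varepsilon$ in Sobolev regularity, and Sobolev embedding eventually places $u$ in $\Omega^{0,n_-}(X)$.

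For part (ii), I would establish the adjoint counterpart of \eqref{e-gue211213yydp}. In the Hilbert space setting, ${\rm Coker\,}\sigma\cap E_{I}(T^{(n_{X_G,-})}_{P_{X_G}})$ is identified with ${\rm Ker\,}\sigma^{*}\cap E_{I}(T^{(n_{X_G,-})}_{P_{X_G}})$. Running the same complex stationary-phase computation that gave Theorem~\ref{t-gue211213yyd}, but now for the composition $\sigma\circ\sigma^*$ on the reduced manifold $X_G$ (the roles of $X$ and $X_G$ being interchanged, and using Theorem~\ref{t-gue211211yyd} on $X_G$ in place of Theorem~\ref{thm:toeplitz-m}), one obtains
\[
\sigma\,\sigma^* = \tau(T^{(n_{X_G,-})}_{P_{X_G}})^2+\Gamma',
\]
where $\Gamma'$ has a vanishing leading symbol on the diagonal of $X_G$ and hence maps $H^s(X_G)\to H^{s+\varepsilon'}(X_G)$ for some $\varepsilon'>0$. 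For any $v\in{\rm Ker\,}\sigma^{*}\cap E_{I}(T^{(n_{X_G,-})}_{P_{X_G}})$, the same computation as above yields $(I+\Gamma')v=0$; the Fredholm argument then gives finite dimensionality, and the bootstrap argument gives smoothness.

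The main obstacle will be the companion identity $\sigma\sigma^* = \tau(T^{(n_{X_G,-})}_{P_{X_G}})^2+\Gamma'$. Once one has it, the rest is formal. Producing it requires redoing the parametrix/composition analysis of~\cite{hmm} and of Theorem~\ref{t-gue211213yyd} with $X$ and $X_G$ swapped: one must track how the phase $\Phi_-$ and the cut-off/lift $\iota_G\circ\hat p_-\circ\pi_{x,n_-}\circ f\circ\iota^*$ intertwine under adjunction, verify that the transverse Gaussian integrations in the fibre directions of $Y\to X_G$ reproduce the correct normalising factors $2^{d-1}V_{{\rm eff}}(x)^{-1}\pi^{-n-1+d/2}|\det R_x^{-1/2}|$ from \eqref{e-gue210802yydIn} on the $X_G$-side, and confirm that the leading-order contribution agrees with $\tau(T^{(n_{X_G,-})}_{P_{X_G}})^2$ so that the residue $\Gamma'$ is of strictly lower order. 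This amounts to a mirror image of Theorem~\ref{t-gue211213yyd} and is the technically delicate step; with it secured, parts (i) and (ii) are symmetric consequences of the compactness of $\Gamma$ and $\Gamma'$.
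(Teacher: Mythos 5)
Your argument for (i) is exactly the paper's: apply $\sigma^*\sigma=\tau(T^{(n_-)}_{P,G})^2+\Gamma$ to $u\in{\rm Ker\,}\sigma\cap E_I$ to get $u=-\Gamma u$, bootstrap regularity via Lemma~\ref{l-gue211213ycd}, and use Rellich/compactness to bound the dimension; for (ii) the paper likewise only says ``repeat with minor change,'' and the companion identity $\sigma\sigma^*=\tau(T^{(n_{X_G,-})}_{P_{X_G}})^2+\Gamma'$ that you flag as the delicate step is precisely what that unstated change amounts to. So the proposal is correct and follows essentially the same route as the paper, being if anything more explicit about what part (ii) requires.
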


\begin{proof}
Fix an open bounded interval $I$, $0\notin\overline{I}$ with $\tau\equiv1$ on $I$. Let $u\in{\rm Ker\,}\sigma\cap E_{I}(T^{(n_-)}_{P,G})$. From \eqref{e-gue211213yydp}, we have 
\begin{equation}\label{e-gue211213ycdt}
0=\sigma^*\,\sigma u=(\tau(T^{(q)}_{P,G})^2+\Gamma)u=u+\Gamma u.
\end{equation}
From Lemma~\ref{l-gue211213ycd} and \eqref{e-gue211213ycdt}, we deduce that $u\in H^{\varepsilon}(X,T^{*0,n_-}X)$. By using Lemma~\ref{l-gue211213ycd} and \eqref{e-gue211213ycdt} again, we get $u\in H^{2\varepsilon}(X,T^{*0,n_-}X)$. Continuing in this way, we conclude that $u\in\Omega^{0,n_-}(X)$. 

Suppose that ${\rm Ker\,}\sigma\cap E_{I}(T^{(n_-)}_{P,G})$ is not a finite dimensional subspace of $\Omega^{0,n_-}(X)$. We can find $u_j\in{\rm Ker\,}\sigma\cap E_{I}(T^{(n_-)}_{P,G})$, $j=1,2,\ldots$, $(\,u_j\,|\,u_k\,)=\delta_{j,k}$, for every $j, k=1,\ldots$. From Lemma~\ref{l-gue211213ycd} and \eqref{e-gue211213ycdt}, 
we see that $\set{u_j}^{+\infty}_{j=1}$ is a bounded set in $H^{\varepsilon}(X,T^{*0,n_-}X)$. By Rellich's lemma, there is a subsequence $1\leq j_1<j_2<\cdots$, $\lim_{s\To+\infty}j_s=+\infty$, such that $u_{j_s}\To u$ in $L^2_{(0,n_-)}(X)$ as $s\To+\infty$, for some $u\in L^2_{(0,n_-)}(X)$. Since  
$(\,u_j\,|\,u_k\,)=\delta_{j,k}$, for every $j, k=1,\ldots$, we get a contradiction. Thus, ${\rm Ker\,}\sigma\cap E_{I}(T^{(n_-)}_{P,G})$ is a finite dimensional subspace of $\Omega^{0,n_-}(X)$. 

We can repeat the procedure above with minor change and deduce that $${\rm Coker\,}\sigma\cap E_{I}(T^{(n_{X_G,-})}_{P_{X_G}})$$ is a finite dimensional subspace of $\Omega^{0,n_{X_G,-}}(X_G)$.
\end{proof} 

Let $H^{q}_b(X)^G={\rm Ker\,}\Box^q_b\cap L^2_{(0,q)}(X)^G$, $H^{q}_b(X_G):={\rm Ker\,}\Box^q_{b,X_G}$ and take $\tau(x)=1$ near $x=1$. 
As a corollary of Theorem~\ref{t-gue211213yydI}, we deduce 

\begin{cor}\label{c-gue211213yyd}
With the same notations and assumptions above, suppose that $\Box^{n_{X_G,-}}_b$ has closed range. We have 
\[\sigma: H^{n_-}_b(X)^G\To H^{n_{X_G,-}}(X_G)\]
is a Fredholm, that is, ${\rm Ker\,}\sigma$ and ${\rm Coker\,}\sigma$ are finite dimensional subspaces of $\Omega^{0,n_-}(X_G)$ and $\Omega^{0,n_{X_G,-}}(X_G)$ respectively. 
\end{cor}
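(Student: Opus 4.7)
The plan is to deduce Corollary~\ref{c-gue211213yyd} directly from Theorem~\ref{t-gue211213yydI} by specializing the pseudodifferential operator $P$ to be the identity. First I would take $P=\mathrm{Id}$, which is $G$-invariant, of order $0$, with scalar principal symbol identically equal to $1$; in particular $\sigma^0_P(x,\mp\omega_0(x))=1>0$ for every $x\in X$, and the same on $X_G$. For this choice the Toeplitz operators collapse to the corresponding Szeg\H{o} projections,
\[
T^{(q)}_{P,G}=S^{(q)}_G\circ S^{(q)}_G=S^{(q)}_G,\qquad T^{(q)}_{P_{X_G}}=S^{(q)}_{X_G},
\]
since orthogonal projections are idempotent. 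Thus both operators are self-adjoint projections whose spectra lie in $\set{0,1}$, and in particular $1$ is an isolated point of the spectrum.

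Next I would pick $\tau\in\mathcal{C}^\infty_c(\mathbb R)$ with $0\notin\mathrm{supp}\,\tau$ and $\tau\equiv 1$ on an open bounded interval $I$ containing $1$ with $0\notin\overline I$. The functional calculus of an orthogonal projection gives $\tau(S^{(q)}_G)=S^{(q)}_G$ (using $\tau(1)=1$, $\tau(0)=0$) and likewise $\tau(S^{(q)}_{X_G})=S^{(q)}_{X_G}$. Consequently
\[
E_\tau(T^{(n_-)}_{P,G})=E_I(T^{(n_-)}_{P,G})=\mathrm{Range}(S^{(n_-)}_G)=H^{n_-}_b(X)^G,
\]
and analogously $E_\tau(T^{(n_{X_G,-})}_{P_{X_G}})=E_I(T^{(n_{X_G,-})}_{P_{X_G}})=H^{n_{X_G,-}}_b(X_G)$. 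Moreover, for any $u\in H^{n_-}_b(X)^G$ one has $\tau(T^{(n_-)}_{P,G})u=u$, and $\tau(T^{(n_{X_G,-})}_{P_{X_G}})=S^{(n_{X_G,-})}_{X_G}$; substituting into \eqref{e-gue180308II} and \eqref{e-gue211213yydr} shows that the map $\sigma$ of the corollary agrees on $H^{n_-}_b(X)^G$ with the map $\sigma$ of Theorem~\ref{t-gue211213yydI} for this choice of $P$ and $\tau$.

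Finally I would invoke Theorem~\ref{t-gue211213yydI}: under the closed-range hypothesis on $\Box^{n_{X_G,-}}_b$ assumed in the corollary, the theorem yields that $\mathrm{Ker}\,\sigma\cap E_I(T^{(n_-)}_{P,G})$ and $\mathrm{Coker}\,\sigma\cap E_I(T^{(n_{X_G,-})}_{P_{X_G}})$ are finite dimensional subspaces of $\Omega^{0,n_-}(X)$ and $\Omega^{0,n_{X_G,-}}(X_G)$ respectively. Under the identifications above, these are precisely the full kernel and cokernel of $\sigma: H^{n_-}_b(X)^G\to H^{n_{X_G,-}}_b(X_G)$, which is therefore Fredholm. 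There is essentially no technical obstacle beyond Theorem~\ref{t-gue211213yydI} itself; the only additional ingredient is the elementary observation that the spectrum of an orthogonal projection is contained in $\set{0,1}$, which allows $\tau$ applied to $S^{(q)}_G$ (resp. $S^{(q)}_{X_G}$) to reproduce the projection and so identify the spectral spaces with the harmonic subspaces $H^{n_-}_b(X)^G$ and $H^{n_{X_G,-}}_b(X_G)$.
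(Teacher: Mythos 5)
Your proposal is correct and follows essentially the same route as the paper: the paper likewise observes that ${\rm Spec\,}(S^{(q)})\subset\set{0,1}$, takes $\tau\equiv 1$ near $1$ (so that the spectral spaces $E_\tau$, $E_I$ collapse to the harmonic spaces $H^{n_-}_b(X)^G$ and $H^{n_{X_G,-}}_b(X_G)$), and invokes Theorem~\ref{t-gue211213yydI}. Your explicit specialization $P=\mathrm{Id}$ and the verification that the two definitions of $\sigma$ agree on $H^{n_-}_b(X)^G$ just spell out what the paper leaves implicit.
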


Our method works well if $n_-=n_+$ or $n_{X_G,-}=n_{X_G,+}$. If $n_-=n_+$, $n_{X_G,-}=n_{X_G,+}$, we define $\sigma$ as \eqref{e-gue180308II} and \eqref{e-gue211213yydr}. 
If $n_-=n_+$, $n_{X_G,-}\neq n_{X_G,+}$, we define 
\begin{equation}\label{e-gue180308IIa}
\begin{split}
\sigma: E_\tau(T^{(n_-)}_{P,G})\cap\Omega^{0,n_-}(X)&\To E_\tau(T^{(n_{X_G,-})}_{P_{X_G}})\cap\Omega^{0,n_{X_G,-}}(X_G)\oplus  E_\tau(T^{(n_{X_G,+})}_{P_{X_G}})\cap\Omega^{0,n_{X_G,-}}(X_G),\\
u&\To v_1\oplus v_2,
\end{split}
\end{equation}
where
\begin{align*}
	v_1:&=\tau(T^{(n_{X_G,-})}_{P_{X_G}})\circ E\circ\iota_G\circ \hat p\circ\pi_{x,n_-}\circ f\circ \iota^*\circ u,\\
	v_2:&=\tau(T^{(n_{X_G,+})}_{P_{X_G}})\circ E\circ\iota_G\circ \hat p_+\circ\pi_{x,n_+}\circ f\circ \iota^*\circ u\,
\end{align*}
and $\sigma$ can be extended to $E_\tau(T^{(n_-)}_{P,G})$ by density and thus, 
\[\sigma: E_\tau(T^{(n_-)}_{P,G})\To E_\tau(T^{(n_{X_G,-})}_{P_{X_G}})\oplus  E_\tau(T^{(n_{X_G,+})}_{P_{X_G}}).\]

If $n_-\neq n_+$, $n_{X_G,-}=n_{X_G,+}$, we define 
\begin{equation}\label{e-gue211214yyd}
\begin{split}
\sigma: E_\tau(T^{(n_-)}_{P,G})\cap\Omega^{0,n_-}(X)\oplus E_\tau(T^{(n_+)}_{P,G})\cap\Omega^{0,n_+}(X)&\To E_\tau(T^{(n_{X_G,-})}_{P_{X_G}})\cap\Omega^{0,n_{X_G,-}}(X_G),\\
u_1\oplus u_2&\mapsto v=v_1+v_2
\end{split}
\end{equation}
where
\begin{align*}&v_1:=\tau(T^{(n_{X_G,-})}_{P_{X_G}})\circ E\circ\iota_G\circ \hat p\circ\pi_{x,n_-}\circ f\circ \iota^*\circ u_1,\\
&v_2:=\tau(T^{(n_{X_G,+})}_{P_{X_G}})\circ E\circ\iota_G\circ \hat p_+\circ\pi_{x,n_+}\circ f\circ \iota^*\circ u_2\,
\end{align*}
and $\sigma$ can be extended to $E_\tau(T^{(n_-)}_{P,G})\oplus E_\tau(T^{(n_+)}_{P,G})$ by density and thus, 
\[\sigma: E_\tau(T^{(n_-)}_{P,G})\oplus E_\tau(T^{(n_+)}_{P,G})\To E_\tau(T^{(n_{X_G,-})}_{P_{X_G}}).\]

Repeating the proof of Theorem~\ref{t-gue211213yyd} with minor changes, we deduce 

\begin{thm}\label{t-gue211214yyd}
With the same notations and assumptions above, let $\tau\in\mathcal{C}^\infty_c(\mathbb R)$ with $0\notin{\rm supp\,}\tau$.
If $n_-=n_+$ and $n_{X_G,-}=n_{X_G,+}$. Suppose that $\Box^{n_{X_G,-}}_b$ has closed range. 
The map 
\[\sigma: E_\tau(T^{(n_-)}_{P,G})\To E_\tau(T^{(n_{X_G,-})}_{P_{X_G}})\]
given by \eqref{e-gue180308II} and \eqref{e-gue211213yydr} has the following properties: \begin{itemize}
	\item[(i)] For every open bounded interval $I$, $0\notin\overline{I}$, with $\tau\equiv1$ on $I$, we have that $${\rm Ker\,}\sigma\cap E_{I}(T^{(n_-)}_{P,G})$$ is a finite dimensional subspace of $\Omega^{0,n_-}(X)$;
	\item[(ii)] for every open bounded interval $I$, $0\notin\overline{I}$, with $\tau\equiv1$ on $I$, we have that $${\rm Coker\,}\sigma\cap E_{I}(T^{(n_{X_G,-})}_{P_{X_G}})$$ is a finite dimensional subspace of $\Omega^{0,n_{X_G,-}}(X_G)$. 
\end{itemize}

If $n_-=n_+$ and $n_{X_G,-}\neq n_{X_G,+}$. Suppose that $\Box^{n_{X_G,-}}_b$ and $\Box^{n_{X_G,+}}_b$ have closed range. 
The map 
\[\sigma: E_\tau(T^{(n_-)}_{P,G})\To E_\tau(T^{(n_{X_G,-})}_{P_{X_G}})\oplus E_\tau(T^{(n_{X_G,+})}_{P_{X_G}})\]
given by \eqref{e-gue180308IIa} has the following properties: 
\begin{itemize}
	\item[(i)] For every open bounded interval $I$, $0\notin\overline{I}$, with $\tau\equiv1$ on $I$, we have that $${\rm Ker\,}\sigma\,\cap E_{I}(T^{(n_-)}_{P,G})$$ is a finite dimensional subspace of $\Omega^{0,n_-}(X)$;
	\item[(ii)] For every open bounded interval $I$, $0\notin\overline{I}$, with $\tau\equiv1$ on $I$, we have that $${\rm Coker\,}\sigma\,\cap\Bigr(E_{I}(T^{(n_{X_G,-})}_{P_{X_G}})\oplus  E_{I}(T^{(n_{X_G,+})}_{P_{X_G}}\Bigr)$$ is a finite dimensional subspace of $\Omega^{0,n_{X_G,-}}(X_G)\oplus \Omega^{0,n_{X_G,+}}(X_G)$.
\end{itemize} 

If $n_-\neq n_+$ and $n_{X_G,-}=n_{X_G,+}$. Suppose that $\Box^{n_{+}}_b$, $\Box^{n_{X_G,-}}_b$ have closed range. Recall that we always assume that $\Box^{n_{-}}_b$ has closed range.
The map 
\[\sigma: E_\tau(T^{(n_-)}_{P,G})\oplus E_\tau(T^{(n_+)}_{P,G})\To E_\tau(T^{(n_{X_G,-})}_{P_{X_G}})\]
given by \eqref{e-gue211214yyd} has the following properties: 
\begin{itemize}
	\item[(i)] For every open bounded interval $I$, $0\notin\overline{I}$, with $\tau\equiv1$ on $I$, we have that $${\rm Ker\,}\sigma\,\cap \Bigr(E_{I}(T^{(n_-)}_{P,G})\oplus E_{I}(T^{(n_+)}_{P,G})\Bigr)$$ is a finite dimensional subspace of $\Omega^{0,n_-}(X)\oplus\Omega^{0,n_+}(X)$. 
	\item[(ii)] For every open bounded interval $I$,  $0\notin\overline{I}$, with $\tau\equiv1$ on $I$, we have that $${\rm Coker\,}\sigma\,\cap E_{I}(T^{(n_{X_G,-})}_{P_{X_G}})$$ is a finite dimensional subspace of $\Omega^{0,n_{X_G,-}}(X_G)$.
\end{itemize}
\end{thm}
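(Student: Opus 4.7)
The plan is to follow the template of Theorem~\ref{t-gue211213yydI} in each of the three signature configurations, reducing the Fredholm claim to an operator identity of the form $\sigma^*\sigma = \mathcal{T}^2 + \Gamma$ (and a symmetric one for the cokernel), where $\mathcal{T}$ is the appropriate functional calculus of the relevant Toeplitz operator (or a direct sum of two such) and $\Gamma$ is a complex Fourier integral operator of Szeg\H{o} type whose leading symbol vanishes on the diagonal of $\mu^{-1}(0)$. Lemma~\ref{l-gue211213ycd}, applied in each case, then shows that $\Gamma$ gains $\varepsilon$ Sobolev regularity, and the standard bootstrap followed by Rellich's lemma yields the finite dimensionality of ${\rm Ker\,}\sigma \cap E_{I}$ and ${\rm Coker\,}\sigma \cap E_{I}$ as in the proof of Theorem~\ref{t-gue211213yydI}.

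In the case $n_-=n_+$, $n_{X_G,-}=n_{X_G,+}$, I would invoke Theorem~\ref{thm:toeplitz-n} in place of Theorem~\ref{thm:toeplitz-m}. The distribution kernel of $\tau(T^{(n_-)}_{P,G})$ now carries two phase contributions $\Phi_-$ and $\Phi_+$, but the insertion of $\hat p_-\circ \pi_{x,n_-}$ in the definition of $\sigma$ selects precisely the $\Phi_-$-piece on the diagonal of $\mu^{-1}(0)$: the principal symbol contribution coming from $\Phi_+$ is annihilated after composition with $\hat p_-\circ \pi_{x,n_-}$, so the leading term of $\sigma^*\sigma$ matches that of $\tau(T^{(n_-)}_{P,G})^2$ and what remains is absorbed into $\Gamma$. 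In the second case $n_-=n_+$, $n_{X_G,-}\neq n_{X_G,+}$, the map splits as $\sigma=\sigma_-\oplus\sigma_+$ according to \eqref{e-gue180308IIa}; I would argue that the cross-terms $\sigma_\pm^*\sigma_\mp$ are smoothing, because the composition of Szeg\H{o}-type kernels with phases $\Phi_-$ and $\Phi_+$ has no real critical points along the diagonal (the two phases differ by a factor $-\overline{(\cdot)}$ in the Reeb direction, by Theorem~\ref{t-gue161109I}). Hence $\sigma^*\sigma=\sigma_-^*\sigma_-+\sigma_+^*\sigma_+$ plus a regularizing error and each summand reproduces $\tau(T^{(n_\mp)}_{P,G})^2$ on its spectral range. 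The third case $n_-\neq n_+$, $n_{X_G,-}=n_{X_G,+}$ is dual: $\sigma$ is defined on the direct sum $E_\tau(T^{(n_-)}_{P,G})\oplus E_\tau(T^{(n_+)}_{P,G})$, the identity reads $\sigma^*\sigma=\tau(T^{(n_-)}_{P,G})^2\oplus\tau(T^{(n_+)}_{P,G})^2+\Gamma$, and the same cross-term cancellation applies.

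Once the identity is in hand, the finite dimensionality follows verbatim from the argument in Theorem~\ref{t-gue211213yydI}: for $u\in {\rm Ker\,}\sigma\cap E_I$ with $\tau\equiv 1$ on $I$, the equation $u=-\Gamma u$ together with iterated use of the extended Lemma~\ref{l-gue211213ycd} yields $u\in\Omega^{0,q}(X)$, and Rellich's lemma rules out infinite $L^2$-orthonormal sequences in ${\rm Ker\,}\sigma\cap E_I$; the cokernel is treated analogously using $\sigma\sigma^*$, which is where the additional closed-range hypotheses on $\Box^{n_{X_G,+}}_b$ (respectively $\Box^{n_+}_b$) in Cases 2 and 3 enter, to ensure that the target Szeg\H{o} projections are Fourier integral operators of the right class.

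The main obstacle is verifying the cross-term cancellation in Cases 2 and 3. Concretely, one needs to check that, after conjugation by the projectors $\hat p_\mp\circ\pi_{x,n_\mp}$ of orthogonal ranges in $T^{*0,q}X$, the composition of a kernel with phase $\Phi_-$ and the adjoint of one with phase $\Phi_+$ is smoothing modulo an element of $\Gamma$. This is plausible from the local description of the phases in Theorem~\ref{t-gue161110g} and the relation $-\overline{\Phi}_+=\Phi_-$, but making it rigorous requires a careful application of the Melin-Sj\"ostrand complex stationary phase in the local coordinates of \cite{hsiaohuang}, tracking simultaneously the frame $\{e_1,\ldots,e_n\}$ in which $\pi_{x,n_-}$ and $\pi_{x,n_+}$ are diagonal and the sign of the imaginary part of the phases along the transverse Reeb direction. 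Once this is established, the remainder of the proof is a direct translation of Theorem~\ref{t-gue211213yyd}, Lemma~\ref{l-gue211213ycd}, and Theorem~\ref{t-gue211213yydI} to the mixed-signature setting.
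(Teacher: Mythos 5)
Your proposal is correct and follows essentially the same route as the paper, which disposes of this theorem in one line by ``repeating the proof of Theorem~\ref{t-gue211213yyd} with minor changes'' and then running the bootstrap--Rellich argument of Theorem~\ref{t-gue211213yydI}. You have in fact spelled out more than the paper does: the cross-term cancellation you flag in Cases 2 and 3 is exactly the ``minor change'' the authors leave implicit, and your justification via the disjointness of the wave front sets of the $\Phi_-$- and $\Phi_+$-pieces (equivalently, the non-stationarity of the combined phase on the diagonal) is the standard and correct way to handle it.
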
 

From Lemma~\ref{l-gue211126yydq}, we see that if $\sigma^0_P(x,\mp\omega_0(x))\neq0$ for every $x\in X$, then $\mathrm{Spec}(T^{(q)}_P)\subset I\cup\set{0}$, for some open interval $I$ with $0\notin\overline{I}$. From this observation and Theorem~\ref{t-gue211214yyd}, we get 

\begin{thm}\label{t-gue211214yydI}
With the same notations and assumptions above, assume that $\sigma^0_P(x,\mp\omega_0(x))>0$ for every $x\in X$.  Thus, $\mathrm{Spec}(T^{(n_{\mp})}_{P,G})\subset I\cup\set{0}$  if $\Box^{n_{\mp}}_b$ has closed range, $\mathrm{Spec}(T^{(n_{X_G,\mp})}_{P_{X_G}})\subset I\cup\set{0}$, if $\Box^{n_{X_G,\mp}}_{b,X_G}$ has closed range, for some open bounded interval $I$ with $0\notin\overline{I}$.  Let $\tau\in\mathcal{C}^\infty_c(\mathbb R)$ with $0\notin{\rm supp\,}\tau$ and $\tau\equiv 1$ near $I$. 

If $n_-=n_+$ and $n_{X_G,-}=n_{X_G,+}$ or $n_-\neq n_+$ and $n_{X_G,-}\neq n_{X_G,+}$. Suppose that $\Box^{n_{X_G,-}}_b$ has closed range. 
The map 
\[\sigma: E_{I}(T^{(n_-)}_{P,G})\To E_{I}(T^{(n_{X_G,-})}_{P_{X_G}})\]
given by \eqref{e-gue180308II} and \eqref{e-gue211213yydr} is Fredholm. 

If $n_-=n_+$ and $n_{X_G,-}\neq n_{X_G,+}$. Suppose that $\Box^{n_{X_G,-}}_b$ and $\Box^{n_{X_G,+}}_b$ have closed range. 
The map 
\[\sigma: E_{I}(T^{(n_-)}_{P,G})\To E_{I}(T^{(n_{X_G,-})}_{P_{X_G}})\oplus E_{I}(T^{(n_{X_G,+})}_{P_{X_G}})\]
given by \eqref{e-gue180308IIa} is Fredholm. 

If $n_-\neq n_+$ and $n_{X_G,-}=n_{X_G,+}$. Suppose that $\Box^{n_{+}}_b$ and $\Box^{n_{X_G,-}}_b$ have closed range. Recall that we always assume that $\Box^{n_{-}}_b$ has closed range.
The map 
\[\sigma: E_{I}(T^{(n_-)}_{P,G})\oplus E_{I}(T^{(n_+)}_{P,G})\To E_{I}(T^{(n_{X_G,-})}_{P_{X_G}})\]
given by \eqref{e-gue211214yyd} is Fredholm. 
\end{thm}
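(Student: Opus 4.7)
The plan is to reduce the theorem to Theorem~\ref{t-gue211214yyd} by identifying the spectral projection spaces $E_I(\cdot)$ with the smooth functional calculus ranges $E_\tau(\cdot)$ for a judicious choice of test function $\tau$. The statement then becomes a direct packaging of Lemma~\ref{l-gue211126yydq} (for the spectral gap) with Theorem~\ref{t-gue211214yyd} (for finite-dimensional kernel and cokernel).

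First, I would produce a common bounded open interval $I$ with $0\notin\overline I$ such that $\mathrm{Spec}(T^{(n_\mp)}_{P,G})\setminus\{0\}$ and $\mathrm{Spec}(T^{(n_{X_G,\mp})}_{P_{X_G}})\setminus\{0\}$ are all contained in $I$. For the $G$-invariant Toeplitz operator on $X$ this follows from an equivariant version of Lemma~\ref{l-gue211126yydq}: the only ingredients used in its proof are the existence of a pseudodifferential parametrix $T^{(q)}_Q T^{(q)}_P\equiv S^{(q)}$ modulo smoothing together with Rellich's lemma, and the $G$-invariant parametrix is available from the calculus in \cite{gh}. For $T^{(n_{X_G,\mp})}_{P_{X_G}}$ one observes that the reduced symbol satisfies $\sigma^0_{P_{X_G}}(x,\mp\omega_0(x))>0$ on $X_G$ because $P_{X_G}$ is built from the restriction of $P$ to $G$-equivariant data on $\mu^{-1}(0)$, so Lemma~\ref{l-gue211126yydq} applies directly on $X_G$. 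Taking a slightly larger interval if necessary to accommodate all four spectra simultaneously produces the desired $I$.

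Next I would fix $\tau\in\mathcal C^\infty_c(\mathbb R)$ with $\tau\equiv 1$ on a neighbourhood of $\overline I$ and $0\notin\mathrm{supp}\,\tau$. Since each Toeplitz operator $A$ above is self-adjoint with $\mathrm{Spec}(A)\subset I\cup\{0\}$, the Borel functional calculus gives $\tau(A)=\Pi_I(A)$, and hence $E_\tau(A)=E_I(A)$. Under this identification the maps $\sigma$ defined in \eqref{e-gue180308II}, \eqref{e-gue180308IIa} and \eqref{e-gue211214yyd} coincide with the maps to which Theorem~\ref{t-gue211214yyd} applies in the corresponding three case distinctions.

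Finally I would invoke Theorem~\ref{t-gue211214yyd} for this particular $\tau$ and $I$. It yields finite-dimensionality of $\mathrm{Ker}\,\sigma\cap E_I(\cdot)$ and $\mathrm{Coker}\,\sigma\cap E_I(\cdot)$ in each case; but the full domain and codomain of $\sigma$ are now precisely those $E_I$-spaces, so the intersections equal the actual kernel and cokernel, which proves Fredholmness. The main obstacle I anticipate is the first step, namely verifying carefully the positivity transfer $\sigma^0_P>0 \Rightarrow \sigma^0_{P_{X_G}}>0$ together with an equivariant version of Lemma~\ref{l-gue211126yydq}, and then producing a single interval $I$ that works simultaneously for all operators involved. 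Once this spectral-gap package is in place, the rest of the argument is purely formal.
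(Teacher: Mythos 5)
Your proposal is correct and follows essentially the same route as the paper: the authors also deduce the theorem by combining Lemma~\ref{l-gue211126yydq} (to get the spectral gap $\mathrm{Spec}\subset I\cup\{0\}$ for the operators involved) with Theorem~\ref{t-gue211214yyd}, choosing $\tau\equiv1$ near $I$ so that $E_\tau$ coincides with $E_I$ and the kernel/cokernel statements become genuine Fredholmness. The details you flag as the main obstacle (the equivariant parametrix and the positivity of $\sigma^0_{P_{X_G}}$ on $X_G$) are indeed left implicit in the paper, so spelling them out as you propose is a reasonable completion rather than a deviation.
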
 





\section{CR manifolds with circle action}\label{s-gue211214yyd}

\subsection{Some standard notations in semi-classical analysis}\label{s-gue170111w}

Let $W_1$ be an open set in $\Real^{N_1}$ and let $W_2$ be an open set in $\Real^{N_2}$. Let $E$ and $F$ be vector bundles over $W_1$ and $W_2$, respectively. 
An $m$-dependent continuous operator
$A_m: \mathcal{C}^\infty_c(W_2,F)\To\mathcal{D}'(W_1,E)$ is called $m$-negligible on $W_1\times W_2$
if, for $m$ large enough, $A_m$ is smoothing and, for any $K\Subset W_1\times W_2$, any
multi-indices $\alpha$, $\beta$ and any $N\in\mathbb N$, there exists $C_{K,\alpha,\beta,N}>0$
such that
\[
\abs{\pr^\alpha_x\pr^\beta_yA_m(x, y)}\leq C_{K,\alpha,\beta,N}m^{-N}\:\: \text{on $K$},\ \ \forall m\gg1.
\]
In that case we write
\[A_m(x,y)=O(m^{-\infty})\:\:\text{on $W_1\times W_2$,} \quad
\text{or} \quad
A_m=O(m^{-\infty})\:\:\text{on $W_1\times W_2$.}\]
If $A_m, B_m: \mathcal{C}^\infty_c(W_2, F)\To\mathcal{D}'(W_1, E)$ are $m$-dependent continuous operators,
we write $A_m= B_m+O(m^{-\infty})$ on $W_1\times W_2$ or $A_m(x,y)=B_m(x,y)+O(m^{-\infty})$ on $W_1\times W_2$ if $A_m-B_m=O(m^{-\infty})$ on $W_1\times W_2$. 
When $W=W_1=W_2$, we sometime write ``on $W$".

Let $X$ and $M$ be smooth manifolds and let $E$ and $F$ be vector bundles over $X$ and $M$, respectively. Let $A_m, B_m: \mathcal{C}^\infty(M,F)\To\mathcal{C}^\infty(X,E)$ be $m$-dependent smoothing operators. We write $A_m=B_m+O(m^{-\infty})$ on $X\times M$ if on every local coordinate patch $D$ of $X$ and local coordinate patch $D_1$ of $M$, $A_m=B_m+O(m^{-\infty})$ on $D\times D_1$.
When $X=M$, we sometime write on $X$.

We recall the definition of the semi-classical symbol spaces

\begin{defn} \label{d-gue140826}
Let $W$ be an open set in $\Real^N$. Let
\[
S^0_{{\rm loc\,}}(1;W):=\Big\{(a(\cdot,m))_{m\in\Real}\,;\, \forall\alpha\in\mathbb N^N_0,
\forall \chi\in\mathcal{C}^\infty_c(W)\,:\:\sup_{m\in\Real, m\geq1}\sup_{x\in W}\abs{\pr^\alpha(\chi a(x,m))}<\infty\Big\}\,.
\]
For $k\in\Real$, let
\[
S^k_{{\rm loc}}(1):=S^k_{{\rm loc}}(1;W)=\Big\{(a(\cdot,m))_{m\in\Real}\,;\,(m^{-k}a(\cdot,m))\in S^0_{{\rm loc\,}}(1;W)\Big\}\,.
\]
Hence $a(\cdot,m)\in S^k_{{\rm loc}}(1;W)$ if for every $\alpha\in\mathbb N^N_0$ and $\chi\in\mathcal{C}^\infty_c(W)$, there
exists $C_\alpha>0$ independent of $m$, such that $\abs{\pr^\alpha (\chi a(\cdot,m))}\leq C_\alpha m^{k}$ holds on $W$.

Consider a sequence $a_j\in S^{k_j}_{{\rm loc\,}}(1)$, $j\in\N_0$, where $k_j\searrow-\infty$,
and let $a\in S^{k_0}_{{\rm loc\,}}(1)$. We say
\[
a(\cdot,m)\sim
\sum\limits^\infty_{j=0}a_j(\cdot,m)\:\:\text{in $S^{k_0}_{{\rm loc\,}}(1)$},
\]
if, for every
$\ell\in\N_0$, we have $a-\sum^{\ell}_{j=0}a_j\in S^{k_{\ell+1}}_{{\rm loc\,}}(1)$ .
For a given sequence $a_j$ as above, we can always find such an asymptotic sum
$a$, which is unique up to an element in
$S^{-\infty}_{{\rm loc\,}}(1)=S^{-\infty}_{{\rm loc\,}}(1;W):=\cap _kS^k_{{\rm loc\,}}(1)$.

Similarly, we can define $S^k_{{\rm loc\,}}(1;Y,E)$ in the standard way, where $Y$ is a smooth manifold and $E$ is a vector bundle over $Y$. 
\end{defn}

\subsection{Functional calculus for Toeplitz operators on CR manifolds with circle action}\label{s-gue211214yyda}
 
We now assume that $X$ admits a CR and transversal locally free circle action $e^{i\theta}$. Let $T$ be the vector field on $X$ given by 
\[(Tu)(x):=\frac{\pr}{\pr\theta}u(e^{i\theta}\circ x)|_{\theta=0},\ \ \forall u\in\mathcal{C}^\infty(X).\]
We take $\omega_0$ so that the associated Reeb vector field $R$ is equal to $T$. Assume that the Hermitian metric $\langle\,\cdot\,|\,\cdot\,\rangle$ on $TX\otimes\mathbb{C}$ is $S^1$-invariant. For every $m\in Z$, set 
\[\Omega^{0,q}_m(X):=\set{u\in\Omega^{0,q}(X);\, (e^{i\theta})^*u=e^{im\theta}u,\ \ \forall e^{i\theta}\in S^1}.\]
 Let $L^2_{(0,q),m}(X)$ be
the completion of $\Omega^{0,q}_m(X)$ with respect to $(\,\cdot\,|\,\cdot\,)$. 
 Put 
\[({\rm Ker\,}\Box^{q}_b)_m:=({\rm Ker\,}\Box^{q}_b)\cap L^2_{(0,q),m}(X).\]
The $m$-th Szeg\H{o} projection is the orthogonal projection 
$S^{(q)}_{m}:L^2_{(0,q)}(X)\To ({\rm Ker\,}\Box^{(q)}_b)_m$
with respect to $(\,\cdot\,|\,\cdot\,)$. Let $P\in L^{0}_{{\rm cl\,}}(X,T^{*0,q}X\boxtimes(T^{*0,q}X)^*)^{S^1}$ be a self-adjoint $S^1$-invariant classical pseudodifferential operator on $X$ with scalar principal symbol. Put 
\[T^{(q)}_{P,m}:=S^{(q)}_{m}\circ P\circ S^{(q)}_m: L^2_{(0,q)}(X)\To L^2_{(0,q),m}(X).\]
Let $\tau\in\mathcal{C}^\infty_c(\mathbb R)$. 
It is straightforward to see that 
\begin{equation}\label{e-gue211214ycda}
\tau(T^{(q)}_{P,m})(x,y)=\frac{1}{2\pi}\int^{2\pi}_0\tau(T^{(q)}_{P})(x,e^{i\theta}y)e^{im\theta}\mathrm{d}\theta.
\end{equation}

For $p\in X$, let 
\[N_p:=\set{g\in S^1;\, g\circ p=p}.\]
Let 
\[X_{{\rm reg\,}}=\set{x\in X;\, e^{i\theta}x\neq x, \forall\theta\in]0,2\pi[}.\]

From Theorem~\ref{t-gue211211yyd} and Theorem~\ref{t-gue211211yyda} and \eqref{e-gue211214ycda}, we can repeat the proof of Theorem 4.4 in~\cite{gh}  and get 

\begin{thm}\label{t-gue211214yyds}
With the notations and assumptions above, recall that we let $q=n_-$. We do not assume that $n_-\neq n_+$. Let $\tau\in\mathcal{C}^\infty_c(\mathbb R)$ with $0\notin{\rm supp\,}\tau$. Let $p\in X$ and assume that 
\[N_p=\set{e^{\frac{\ell 2\pi}{k}i};\, \ell=0,1,\ldots,k-1}.\]
Let $D$ be an open local coordinate patch of $p$ with local coordinates $x=(x_1,\ldots,x_{2n+1})$. Then, as $m\gg1$, 
\begin{equation}\label{e-gue211214yydu}
\begin{split}
&\tau(T^{(q)}_{P,m})(x,y)=\sum^{k-1}_{\ell=0}e^{\frac{im2\pi\ell}{k}}e^{im\Psi(x,e^{\frac{2\pi\ell}{k}i}y)}a_P(x,e^{\frac{2\pi\ell}{k}i}y,m)+O(m^{-\infty})\ \ \mbox{on $D\times D$},\\
&\Psi\in\mathcal{C}^\infty(D\times D),\ \ \Psi(x,x)=0,\ \ d_x\Psi(x,x)=-d_y\Psi(x,x)=-\omega_0(x),\ \ \forall x\in D,\\
&{\rm Im\,}\Psi(x,y)\geq c\inf_{e^{i\theta}\in S^1}d(e^{i\theta}x,y)^2,\ \ \forall (x,y)\in D\times D,\ \ \mbox{$c>0$ is a constant},\\
&a_P(x,y,m)\in S^{n}_{{\rm loc\,}}(1; D\times D, T^{*0,q}X\boxtimes(T^{*0,q}X)^*),\\
&\mbox{$a_P(x,y,m)\sim\sum^\infty_{j=0}m^{n-j}a_{P,j}(x,y)$ in $S^{n}_{{\rm loc\,}}(1; D\times D, T^{*0,q}X\boxtimes(T^{*0,q}X)^*)$},\\
&a_{P,j}(x,y)\in\mathcal{C}^\infty(D\times D, T^{*0,q}X\boxtimes(T^{*0,q}X)^*),\ \ j=0,1,2,\ldots,
\end{split}
\end{equation}
and 
\begin{equation}\label{e-gue211214yydv}
\mbox{$a_{P,0}(x,x)=\frac{1}{2}\pi^{-n-1}\abs{{\rm det\,}\mathcal L_x}\tau(\sigma^0_P(x,-\omega_0(x)))\pi_{x,n_-}$, for every $x\in D$}. 
\end{equation}
\end{thm}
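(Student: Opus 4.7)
The plan is to derive the expansion from the averaging identity \eqref{e-gue211214ycda}: substitute the complex Fourier integral representation of $\tau(T^{(q)}_P)$ provided by Theorem~\ref{t-gue211211yyd} (and Theorem~\ref{t-gue211211yyda} when $q=n_-=n_+$) into the right-hand side, and apply the complex stationary phase method of Melin--Sj\"ostrand in the averaging variable $\theta$. On a local coordinate patch $D$, I write
\[\tau(T^{(q)}_P)(x,y)\equiv\int_0^\infty e^{i\varphi_-(x,y)t}a_-(x,y,t)\,\mathrm{d}t+\int_0^\infty e^{i\varphi_+(x,y)t}a_+(x,y,t)\,\mathrm{d}t,\]
with $a_+\equiv 0$ unless $q=n_-=n_+$, insert into \eqref{e-gue211214ycda} and rescale $t=m\sigma$, $\sigma>0$. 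For each sign this produces an oscillatory integral in $(\sigma,\theta)\in\mathbb R_+\times[0,2\pi)$ with large parameter $m$ and phase
\[\Phi_{\mp}(x,y,\sigma,\theta):=\sigma\,\varphi_{\mp}(x,e^{i\theta}y)+\theta.\]

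Next I analyze the critical set of $\Phi_-$. The equation $\partial_\sigma\Phi_-=\varphi_-(x,e^{i\theta}y)=0$ combined with ${\rm Im\,}\varphi_-\geq0$ and the explicit local expressions \eqref{e-gue140205VI}--\eqref{e-gue140205VII} forces $e^{i\theta}y$ to lie (approximately) on the $S^1$-orbit of $x$. Coupled with $\partial_\theta\Phi_-=\sigma\,\partial_\theta\varphi_-(x,e^{i\theta}y)+1=0$ and the identity $\partial_\theta\varphi_-(x,e^{i\theta}y)|_{y=x,\theta=0}=\langle\omega_0(x),T(x)\rangle=-1$, this gives $\sigma=1$ and $\theta=\theta_\ell$, where $\theta_\ell\in[0,2\pi)$ ranges over the $k$ angles corresponding to the elements of $N_p$. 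The $+$-phase yields no real critical points unless $q=n_-=n_+$, in which case an analogous analysis applies to $\Phi_+$. A partition of unity in $\theta$ around the $\theta_\ell$ isolates these contributions, the complement being $O(m^{-\infty})$ by non-stationary phase together with the positivity of ${\rm Im\,}\varphi_\mp$.

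On each piece I apply the complex stationary phase method, integrating out $\sigma$ and the $\theta$-fluctuation near $\theta_\ell$. The critical value of the phase factorises as $\tfrac{2\pi m\ell}{k}+m\Psi(x,e^{i\theta_\ell}y)$ for a smooth function $\Psi$ on $D\times D$ that plays the role of an effective Szeg\H{o} phase after $S^1$-averaging. The properties $\Psi(x,x)=0$, $d_x\Psi(x,x)=-d_y\Psi(x,x)=-\omega_0(x)$ and ${\rm Im\,}\Psi(x,y)\geq c\inf_{e^{i\theta}\in S^1}d(e^{i\theta}x,y)^2$ descend from the corresponding properties of $\varphi_-$ in Theorem~\ref{t-gue161109I}--Theorem~\ref{t-gue161110g}, using in addition the $S^1$-equivariance of $\varphi_-$ implied by the $S^1$-invariance of $P$. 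The Melin--Sj\"ostrand expansion then delivers a symbol $a_P(x,y,m)\in S^n_{{\rm loc}}(1;D\times D,T^{*0,q}X\boxtimes(T^{*0,q}X)^*)$ whose leading coefficient evaluates on the diagonal to $\sigma^0_{\tau(T^{(q)}_P)}(x,x)$, which by Theorem~\ref{t-gue211211yyd} yields \eqref{e-gue211214yydv}. The main technical step, following the proof of Theorem~4.4 of \cite{gh}, is to verify that the globally-defined $\Psi$ and $a_P$ built from the $k$ local stationary-phase contributions are independent of the partition of unity and of the local trivialisations chosen; this is handled by the uniqueness of the stationary phase expansion modulo $S^{-\infty}_{{\rm loc}}(1)$ together with the $S^1$-equivariance of the construction.
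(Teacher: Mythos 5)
Your argument is essentially the paper's own proof: the paper simply combines Theorem~\ref{t-gue211211yyd}, Theorem~\ref{t-gue211211yyda} and the averaging identity \eqref{e-gue211214ycda} and instructs the reader to repeat the stationary-phase computation of Theorem~4.4 of \cite{gh}, which is exactly the substitution of the Fourier integral representation, the rescaling $t=m\sigma$, and the Melin--Sj\"ostrand analysis of the phase $\sigma\varphi_{\mp}(x,e^{i\theta}y)+\theta$ that you describe. One point you should state unambiguously: when $q=n_-=n_+$ the $a_+$-piece is genuinely present, but since $d_y\varphi_+(x,x)=-\omega_0(x)$ its phase has $\theta$-derivative $\sigma\,\pr_\theta\varphi_+ +1=\sigma+1>0$ near the diagonal, hence \emph{no} stationary points for $m\To+\infty$ and a contribution that is $O(m^{-\infty})$; the ``analogous analysis'' of $\Phi_+$ must end with this conclusion, which is why \eqref{e-gue211214yydu} contains only the single $\varphi_-$-type phase $\Psi$ and the leading term \eqref{e-gue211214yydv} involves only $\tau(\sigma^0_P(x,-\omega_0(x)))\pi_{x,n_-}$.
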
 

From Theorem~\ref{t-gue211214yyds}, we deduce 

\begin{cor}\label{c-gue211214yyds}
With the notations and assumptions above, recall that we let $q=n_-$. We do not assume that $n_-\neq n_+$. Let $\tau\in\mathcal{C}^\infty_c(\mathbb R)$ with $0\notin{\rm supp\,}\tau$. Then, there is a constant $C>0$ such that for $m\gg1$, 
\begin{equation}\label{e-gue211214yydw}
\abs{\tau(T^{(q)}_{P,m})(x,y)}\leq Cm^n,\ \ \mbox{for all $(x,y)\in X\times X$}. 
\end{equation}

Assume that $X$ is connected and $X_{{\rm reg\,}}\neq\emptyset$. Then, $X_{{\rm reg\,}}$ is dense in $X$. We have 
\begin{equation}\label{e-gue211214yydx}
\lim_{m\To+\infty}m^{-n}\tau(T^{(q)}_{P,m})(x,x)=\frac{1}{2}\pi^{-n-1}\abs{{\rm det\,}\mathcal L_x}\tau(\sigma^0_P(x,-\omega_0(x)))\pi_{x,n_-},\ \ \mbox{for every $x\in X$}.
\end{equation}
\end{cor}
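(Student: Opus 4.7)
The plan is to read off both claims directly from the local expansion in Theorem~\ref{t-gue211214yyds}, combined with standard facts about locally free $S^1$-actions on compact manifolds. The proof naturally splits into three pieces: a global uniform bound obtained by compactness from the local asymptotics; density of $X_{{\rm reg\,}}$ from the stabilizer stratification; and the pointwise limit at regular points, which specializes the local formula.

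For the uniform bound \eqref{e-gue211214yydw}, I would cover $X$ by finitely many coordinate patches $D_1,\ldots,D_N$ of the type allowed in Theorem~\ref{t-gue211214yyds}, centered at points $p_\alpha$ with cyclic stabilizers $N_{p_\alpha}$ of orders $k_\alpha$. Such a finite cover exists because the locally free action on the compact manifold $X$ admits only finitely many distinct stabilizer orders (by the slice theorem). On each $D_\alpha\times D_\alpha$, the formula \eqref{e-gue211214yydu} displays $\tau(T^{(q)}_{P,m})(x,y)$ as a sum of at most $k_\alpha$ oscillatory terms plus $O(m^{-\infty})$. Since ${\rm Im\,}\Psi\geq 0$ one has $\bigl|e^{im\Psi}\bigr|\leq 1$, and $a_P\in S^{n}_{{\rm loc\,}}(1;D_\alpha\times D_\alpha,T^{*0,q}X\boxtimes(T^{*0,q}X)^*)$ is bounded by $C_\alpha m^n$ on the relatively compact base. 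Taking the maximum over the finite cover and absorbing the smoothing residue yields \eqref{e-gue211214yydw}.

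Next, I would establish that $X_{{\rm reg\,}}$ is dense in $X$ under the hypotheses that $X$ is connected and $X_{{\rm reg\,}}\neq\emptyset$. For each integer $k\geq 2$ set $X^{(k)}:=\{x\in X\,:\, e^{2\pi i/k}\cdot x=x\}$. Each $X^{(k)}$ is a closed smooth submanifold, and local freeness of the $S^1$-action forces $X^{(k)}$ to have positive codimension at every point where it is proper in $X$. Connectedness of $X$ together with $X_{{\rm reg\,}}\neq\emptyset$ rules out the possibility $X^{(k)}=X$ for any $k\geq 2$, so each $X^{(k)}$ is nowhere dense. Compactness plus local freeness ensures that only finitely many $k\geq 2$ can occur as stabilizer orders, so $X\setminus X_{{\rm reg\,}}=\bigcup_{k\geq 2}X^{(k)}$ is a finite union of nowhere dense closed sets, hence $X_{{\rm reg\,}}$ is dense.

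For the pointwise limit \eqref{e-gue211214yydx}, I fix $x\in X_{{\rm reg\,}}$; there the stabilizer is trivial, so $k=1$ in Theorem~\ref{t-gue211214yyds} and only the $\ell=0$ summand survives in \eqref{e-gue211214yydu}. Since $\Psi(x,x)=0$ this gives
\[
m^{-n}\tau(T^{(q)}_{P,m})(x,x)=m^{-n}a_P(x,x,m)+O(m^{-\infty}),
\]
and the asymptotic expansion $a_P(x,x,m)\sim\sum_{j\geq 0}m^{n-j}a_{P,j}(x,x)$ together with the explicit leading coefficient \eqref{e-gue211214yydv} yields the claimed limit at each $x\in X_{{\rm reg\,}}$. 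The identification of the limit then extends to every $x\in X$ by the density of $X_{{\rm reg\,}}$ just established and the continuity in $x$ of the right-hand side. The main obstacle is essentially bookkeeping: controlling the local data (patches, stabilizer orders, and $O(m^{-\infty})$ residues) globally in a way that is uniform in $x$, $y$ and $m$, which reduces to the slice theorem and the compactness of $X$.
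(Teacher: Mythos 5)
Your overall route is the one the paper intends (the corollary is stated as a direct consequence of Theorem~\ref{t-gue211214yyds}, with no written proof), and two of your three pieces are sound: the uniform bound on the patches $D_\alpha\times D_\alpha$ via $\abs{e^{im\Psi}}\leq 1$ and $a_P\in S^n_{{\rm loc\,}}(1)$, and the density of $X_{{\rm reg\,}}$ via the stabilizer stratification. (For \eqref{e-gue211214yydw} you should also say a word about pairs $(x,y)$ with $y$ far from the orbit $S^1\cdot x$: your patches only cover a neighborhood of the diagonal, and one needs the $S^1$-equivariance $\abs{\tau(T^{(q)}_{P,m})(x,y)}=\abs{\tau(T^{(q)}_{P,m})(x,e^{i\theta}y)}$ plus the fact that $\tau(T^{(q)}_P)\in\Psi_n(X)$ is smoothing off the diagonal, so that integration against $e^{im\theta}$ in \eqref{e-gue211214ycda} gives $O(m^{-\infty})$ there. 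This is minor bookkeeping.)

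The genuine gap is your last step: ``the identification of the limit then extends to every $x\in X$ by density of $X_{{\rm reg\,}}$ and continuity of the right-hand side'' is a non sequitur. Pointwise convergence of a sequence of functions on a dense set, together with continuity of the candidate limit, does not yield pointwise convergence on the closure unless you have some equicontinuity or uniformity of the convergence in $x$, which you have not established and which fails here. Indeed, at a point $p$ with $N_p=\set{e^{2\pi i\ell/k};\,\ell=0,\ldots,k-1}$, $k>1$, setting $x=y=p$ in \eqref{e-gue211214yydu} and using $e^{2\pi i\ell/k}p=p$, $\Psi(p,p)=0$ gives
\begin{equation*}
m^{-n}\tau(T^{(q)}_{P,m})(p,p)=\Bigl(\sum_{\ell=0}^{k-1}e^{2\pi i m\ell/k}\Bigr)\,m^{-n}a_P(p,p,m)+O(m^{-\infty}),
\end{equation*}
and the prefactor equals $k$ when $k\mid m$ and $0$ otherwise, so the normalized kernel oscillates and the limit \eqref{e-gue211214yydx} cannot be obtained by your argument at such points (and in fact does not exist there when $a_{P,0}(p,p)\neq0$). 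What your argument actually proves is \eqref{e-gue211214yydx} for $x\in X_{{\rm reg\,}}$; since $X\setminus X_{{\rm reg\,}}$ is a finite union of positive-codimension submanifolds, this together with the uniform bound \eqref{e-gue211214yydw} is all that is used in the sequel (Theorem~\ref{t-gue211215yyd}), but you should either restrict the pointwise statement to $X_{{\rm reg\,}}$ or supply a genuinely different argument for the singular points rather than appeal to density.
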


\subsection{Semi-classical spectral dimensions law for Toeplitz operators on CR manifolds with circle action}\label{s-gue211216yyd}

Let $I\subset\mathbb R$ be an open bounded interval with $0\notin\overline{I}$. We are going to study the asymptotic limit of $\lim_{m\To+\infty}m^{-n}{\rm dim\,}E_I(T^{(q)}_{P,m})$. 
For every $x\in X$, let $f_1(x),\ldots,f_r(x)\in T^{*0,q}_xX$ be an orthonormal basis for $T^{*0,q}_xX$, $r={\rm dim\,}T^{*0,q}_xX$. Let $A: T^{*0,q}_yX\To T^{*0,q}_xX$ be a linear transformation, $x, y\in X$. 
We write 
\begin{equation}\label{e-gue211215yyd} A=\sum^r_{i,j=1} c_{i,j}f_i(x)\otimes f_k(y)\,,\quad c_{i,j}\in\mathbb{R}\,,\quad \forall i, j,  \end{equation}
where $f_i(x)\otimes f_k(y) : T^{*0,q}_yX\To T^{*0,q}_xX$ is the linear transformation given by $$(f_i(x)\otimes f_k(y))(f_s(y))=\delta_{k,s}f_i(x)\,,$$ for every $i, k, s=1,\ldots,r$. 
When $x=y$, we put 
\[{\rm Tr\,}A:=\sum^r_{j=1}\langle\,Af_j(x)\,|\,f_j(x)\,\rangle.\]
We need 

\begin{lem}\label{l-gue211215yyd}
With the notations and assumptions above, let $I\subset\mathbb R$ be an open bounded interval with $0\notin\overline{I}$. Let $\tau\in\mathcal{C}^\infty_c(\mathbb R)$ with $0\notin{\rm supp\,}\tau$ and $\tau\equiv1$ on some neighborhood of $I$. We have 
\begin{equation}\label{e-gue211215yydI}
{\rm Tr\,}(\Pi_I(T^{(q)}_{P,m})(x,x))\leq{\rm Tr\,}(\tau(T^{(q)}_{P,m})^2(x,x)),\ \ \mbox{for every $x\in X$}. 
\end{equation}
\end{lem}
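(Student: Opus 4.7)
The plan is to upgrade the spectral inequality $\tau(T)^{2}\geq \Pi_{I}(T)$, where $T:=T^{(q)}_{P,m}$, to an inequality of fiberwise traces on the diagonal by factoring the difference as $C^{*}C$ for a smoothing self-adjoint operator $C$. The operator inequality itself is immediate from the pointwise inequality $\tau(\lambda)^{2}\geq \chi_{I}(\lambda)$ on $\mathbb{R}$: since $\tau\equiv 1$ on a neighborhood of $I$ one has $\tau(\lambda)^{2}=1=\chi_{I}(\lambda)$ for $\lambda\in I$ and $\tau(\lambda)^{2}\geq 0=\chi_{I}(\lambda)$ otherwise, so by the spectral theorem $\tau(T)^{2}-\Pi_{I}(T)\geq 0$ on $L^{2}_{(0,q)}(X)$.

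Set $C:=\tau(T)-\Pi_{I}(T)$. The same hypothesis $\tau\equiv 1$ near $I$ gives, through functional calculus, $\tau(T)\Pi_{I}(T)=\Pi_{I}(T)\tau(T)=\Pi_{I}(T)$, so $C^{*}=C$ and a direct expansion yields
\[
C^{2}=\tau(T)^{2}-\tau(T)\Pi_{I}(T)-\Pi_{I}(T)\tau(T)+\Pi_{I}(T)^{2}=\tau(T)^{2}-\Pi_{I}(T).
\]
Hence $\tau(T)^{2}-\Pi_{I}(T)=C^{*}C$, which is the desired factorization.

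Next I would verify that $C$ has a smooth Schwartz kernel, so that the kernel of $C^{*}C$ can be computed pointwise by ordinary integration. Under the running assumptions $X$ is compact and carries a transversal locally free $S^{1}$-action, so $({\rm Ker\,}\Box^{q}_{b})_{m}$ is finite-dimensional and consists of smooth $(0,q)$-forms; consequently $S^{(q)}_{m}$ is finite-rank with smooth kernel, and so is $T=S^{(q)}_{m}\circ P\circ S^{(q)}_{m}$. Because $0\notin\overline{I}$ and $0\notin{\rm supp\,}\tau$, every eigenform $u$ of $T$ contributing to $\Pi_{I}(T)$ or $\tau(T)$ corresponds to a non-zero eigenvalue $\lambda$ and hence $u=\lambda^{-1}S^{(q)}_{m}PS^{(q)}_{m}u\in\Omega^{0,q}(X)$. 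It follows that $\Pi_{I}(T)$, $\tau(T)$, and therefore $C$, are finite-rank smoothing operators with smooth kernels.

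Once smoothness of $C(x,y)$ is in hand, the usual formula for the kernel of a composition gives
\[
(C^{*}C)(x,x)=\int_{X}C(y,x)^{*}\,C(y,x)\,\mathrm{d}v(y),
\]
which at each $x\in X$ is a positive semi-definite endomorphism of $T^{*0,q}_{x}X$; taking the fiberwise trace yields ${\rm Tr\,}(C^{2}(x,x))\geq 0$, i.e.\ \eqref{e-gue211215yydI}. The step most likely to require care is the verification that $C$ genuinely has a smooth kernel so that the pointwise composition formula above is licit; this is the only real checkpoint, and it is supplied here by the finite-dimensionality of $({\rm Ker\,}\Box^{q}_{b})_{m}$ in the $S^{1}$-invariant setting, which makes every operator in sight finite-rank.
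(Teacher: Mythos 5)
Your proof is correct, but it takes a genuinely different route from the paper's. You factor the positive operator $\tau(T)^2-\Pi_I(T)$ as $C^*C$ with $C=\tau(T)-\Pi_I(T)$ self-adjoint and smoothing, and then read off the inequality from the pointwise positive semi-definiteness of $(C^*C)(x,x)=\int_X C(y,x)^*C(y,x)\,\mathrm{dV}_X(y)$. The paper instead starts from the reproducing identity $\Pi_I(T)=\tau(T)\Pi_I(T)$, writes the diagonal value of $\Pi_I(T)$ as a composition integral, and applies Cauchy--Schwarz (in the $y$-integral and in the matrix indices) to get ${\rm Tr\,}\Pi_I(T)(x,x)\leq\sqrt{{\rm Tr\,}\tau(T)^2(x,x)}\sqrt{{\rm Tr\,}\Pi_I(T)(x,x)}$, then cancels. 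Your argument is arguably cleaner and more conceptual for this lemma, and you are right to flag (and to justify, via finite-dimensionality of $({\rm Ker\,}\Box^q_b)_m$ in the transversal $S^1$-setting) that the kernels involved are smooth and finite rank so the pointwise composition formula is legitimate --- a point the paper leaves implicit. One thing the paper's Cauchy--Schwarz template buys that your positivity argument does not: it recycles verbatim to prove the companion Lemma~\ref{l-gue211215yydI}, which is a genuine Cauchy--Schwarz bound ${\rm Tr\,}\tau(T)(x,x)\leq\sqrt{{\rm Tr\,}\Pi_I(T)(x,x)}\sqrt{{\rm Tr\,}\tau(T)^2(x,x)}$ rather than an operator positivity statement; but that is irrelevant to the correctness of your proof of the present lemma.
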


\begin{proof}
Since $\tau\equiv1$ on some neighborhood of $I$, we have 
\begin{equation}\label{e-gue211215yydII}
\Pi_I(T^{(q)}_{P,m})=\tau(T^{(q)}_{P,m})\Pi_I(T^{(q)}_{P,m}).
\end{equation}
From \eqref{e-gue211215yydII}, we have 
\begin{equation}\label{e-gue211215yydIII}
\Pi_I(T^{(q)}_{P,m})(x,x)=\int_X\tau(T^{(q)}_{P,m})(x,y)\Pi_I(T^{(q)}_{P,m})(y,x)\mathrm{dV}_X(y).
\end{equation}
For every $x\in X$, let $f_1(x),\ldots,f_r(x)\in T^{*0,q}_xX$ be an orthonormal basis for $T^{*0,q}_xX$, $r={\rm dim\,}T^{*0,q}_xX$.
 As \eqref{e-gue211215yyd}, we write 
\begin{equation}\label{e-gue211215yyda}
\begin{split}
\Pi_I(T^{(q)}_{P,m})(x,y)=\sum^r_{i,j=1} a_{i,j}(x,y)f_i(x)\otimes f_k(y),\ \ a_{i,j}(x,y)\in\mathbb R,\ \ \forall i, j, \\
\tau(T^{(q)}_{P,m})(x,y)=\sum^r_{i,j=1} b_{i,j}(x,y)f_i(x)\otimes f_k(y),\ \ b_{i,j}(x,y)\in\mathbb R,\ \ \forall i, j.
\end{split}
\end{equation}
From \eqref{e-gue211215yydIII}, we have 
\begin{equation}\label{e-gue211215yydb}
\begin{split}
&{\rm Tr\,}(\Pi_I(T^{(q)}_{P,m})(x,x))=\sum^r_{i=1}a_{i,i}(x,x)=\sum^r_{i,j=1}\int b_{i,j}(x,y)a_{j,i}(y,x)\mathrm{dV}_X(y)\\
&\leq\sum^r_{i,j=1}\sqrt{\int\abs{b_{i,j}(x,y)}^2\mathrm{dV}_X(y)\int\abs{a_{j,i}(y,x)}^2\mathrm{dV}_X(y)}\\
&\leq\sqrt{\sum^r_{i,j=1}\int\abs{b_{i,j}(x,y)}^2\mathrm{dV}_X(y)}\sqrt{\int\abs{a_{j,i}(y,x)}^2\mathrm{dV}_X(y)}.
\end{split}
\end{equation}
Write 
\begin{equation}\label{e-gue211215yyde}
\tau(T^{(q)}_{P,m})^2(x,y)=\sum^r_{i,j=1} d_{i,j}(x,y)f_i(x)\otimes f_k(y),\ \ d_{i,j}(x,y)\in\mathbb R,\ \ \forall i, j.
\end{equation}
It is not difficult to see that 
\begin{equation}\label{e-gue211215yydc}
\begin{split}
&\sum^r_{i,j=1}\int\abs{b_{i,j}(x,y)}^2\mathrm{dV}_X(y)=\sum^r_{i=1}d_{i,i}(x,x)={\rm Tr\,}(\tau(T^{(q)}_{P,m})^2(x,x)),\\
&\sum^r_{i,j=1}\int\abs{a_{i,j}(x,y)}^2\mathrm{dV}_X(y)=\sum^r_{i=1}a_{i,i}(x)={\rm Tr\,}(\Pi_I(T^{(q)}_{P,m})(x,x)).
\end{split}
\end{equation}
From \eqref{e-gue211215yydb} and \eqref{e-gue211215yydc}, we get \eqref{e-gue211215yydI}. 
\end{proof}

We can repeat the proof of Lemma~\ref{l-gue211215yyd} with minor change and deduce 

\begin{lem}\label{l-gue211215yydI}
With the notations and assumptions above, let $I\subset\mathbb R$ be an open bounded interval with $0\notin\overline{I}$. Let $\tau\in\mathcal{C}^\infty_c(I)$ 
with $0\notin{\rm supp\,}\tau$. We have 
\begin{equation}\label{e-gue211215yydp}
{\rm Tr\,}(\tau(T^{(q)}_{P,m})(x,x))\leq\sqrt{{\rm Tr\,}(\Pi_I(T^{(q)}_{P,m})(x,x))}\sqrt{{\rm Tr\,}(\tau(T^{(q)}_{P,m})^2(x,x))},\ \ \mbox{for every $x\in X$}. 
\end{equation}
Hence, 
\begin{equation}\label{e-gue211215yydq}
2{\rm Tr\,}(\tau(T^{(q)}_{P,m})(x,x))-{\rm Tr\,}(\tau(T^{(q)}_{P,m})^2(x,x))\leq{\rm Tr\,}(\Pi_I(T^{(q)}_{P,m})(x,x)),\ \ \mbox{for every $x\in X$}. 
\end{equation}
\end{lem}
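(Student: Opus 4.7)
The plan is to mirror the proof of Lemma~\ref{l-gue211215yyd} with the roles of $\tau$ and $\Pi_I$ interchanged. The crucial starting point is that, since $\mathrm{supp}\,\tau\subset I$, the bounded Borel functions $\tau$ and $\tau\cdot\chi_I$ agree on $\mathbb R$, so the functional calculus yields the operator identity
\[
\tau(T^{(q)}_{P,m})=\tau(T^{(q)}_{P,m})\circ \Pi_I(T^{(q)}_{P,m}).
\]
Passing to distribution kernels and evaluating on the diagonal gives
\[
\tau(T^{(q)}_{P,m})(x,x)=\int_X \tau(T^{(q)}_{P,m})(x,y)\circ \Pi_I(T^{(q)}_{P,m})(y,x)\,\mathrm{dV}_X(y),
\]
which is the exact analogue of \eqref{e-gue211215yydIII}. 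Well-definedness of the pointwise values is guaranteed because $\tau(T^{(q)}_{P,m})$ has a smooth kernel by Theorem~\ref{t-gue211214yyds}, and $\Pi_I(T^{(q)}_{P,m})$ has finite-dimensional range (hence smooth kernel) by Lemma~\ref{l-gue211126yydq}.

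Next, I would expand both kernels in a pointwise orthonormal frame $\{f_i(x)\}_{i=1}^r$ as in \eqref{e-gue211215yyda}, writing $\tau(T^{(q)}_{P,m})(x,y)=\sum_{i,j}b_{i,j}(x,y)\,f_i(x)\otimes f_j(y)$ and $\Pi_I(T^{(q)}_{P,m})(x,y)=\sum_{i,j}a_{i,j}(x,y)\,f_i(x)\otimes f_j(y)$. Taking the trace of the composition on the diagonal gives
\[
\mathrm{Tr}\,\tau(T^{(q)}_{P,m})(x,x)=\sum_{i,j=1}^r\int_X b_{i,j}(x,y)\,a_{j,i}(y,x)\,\mathrm{dV}_X(y).
\]
Two successive applications of Cauchy--Schwarz (first inside each integral, then on the finite sum over $(i,j)$) yield
\[
\mathrm{Tr}\,\tau(T^{(q)}_{P,m})(x,x)\leq \sqrt{\sum_{i,j}\int_X|b_{i,j}(x,y)|^2\mathrm{dV}_X(y)}\sqrt{\sum_{i,j}\int_X|a_{j,i}(y,x)|^2\mathrm{dV}_X(y)}.
\]
Identifying the two factors exactly as in \eqref{e-gue211215yydc} -- the first equals $\mathrm{Tr}\,\tau(T^{(q)}_{P,m})^2(x,x)$ by self-adjointness of $\tau(T^{(q)}_{P,m})$, and the second equals $\mathrm{Tr}\,\Pi_I(T^{(q)}_{P,m})(x,x)$ by the projection identity $\Pi_I^2=\Pi_I$ together with self-adjointness -- proves \eqref{e-gue211215yydp}.

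The second inequality \eqref{e-gue211215yydq} is a purely algebraic consequence of the first. Setting $A:=\mathrm{Tr}\,\tau(T^{(q)}_{P,m})(x,x)$, $B:=\mathrm{Tr}\,\tau(T^{(q)}_{P,m})^2(x,x)$ and $C:=\mathrm{Tr}\,\Pi_I(T^{(q)}_{P,m})(x,x)$, inequality \eqref{e-gue211215yydp} reads $A\leq\sqrt{BC}$, and the AM--GM inequality applied to $B$ and $C$ gives $2A\leq 2\sqrt{BC}\leq B+C$, which is exactly \eqref{e-gue211215yydq}.

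I do not anticipate any serious obstacle: the whole argument is a transcription of the proof of Lemma~\ref{l-gue211215yyd}, with only the initial factorization $\tau=\tau\,\Pi_I$ replacing $\Pi_I=\tau\,\Pi_I$. The Cauchy--Schwarz manipulations and the identification step go through verbatim because \eqref{e-gue211215yydc} uses only self-adjointness of $\tau(T^{(q)}_{P,m})$ and the projection identity for $\Pi_I$, both of which are symmetric in the current setup. No new analytic input beyond the smoothness and finite-rank considerations already invoked in the previous lemma is required.
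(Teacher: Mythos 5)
Your proposal is correct and is essentially the paper's own argument: the paper proves this lemma precisely by repeating the proof of Lemma~\ref{l-gue211215yyd} with the factorization $\tau=\tau\cdot 1_I$ replacing $1_I=\tau\cdot 1_I$, followed by the same kernel expansion, double Cauchy--Schwarz, and the identifications \eqref{e-gue211215yydc}, with \eqref{e-gue211215yydq} then following from $2\sqrt{BC}\leq B+C$.
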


We can now prove the following semi-classical spectral dimensions for Toeplitz operators

\begin{thm}\label{t-gue211215yyd}
With the notations and assumptions above, let $I\subset\mathbb R$ be an open bounded interval with $0\notin\overline{I}$. Suppose that $X$ is connected and $X_{{\rm reg\,}}\neq\emptyset$.
We have 
\begin{equation}\label{e-gue211215yydr}
\lim_{m\To+\infty}m^{-n}{\rm dim\,}E_I(T^{(q)}_{P,m})=\frac{1}{2}\pi^{-n-1}\int_{\set{x\in X;\, \sigma^0_P(x,-\omega_0(x))\in I}}\abs{{\rm det\,}\mathcal L_x}\mathrm{dV}_X(x). 
\end{equation}
\end{thm}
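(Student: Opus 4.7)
The plan is to write $\dim E_I(T^{(q)}_{P,m})=\mathrm{Tr}\,\Pi_I(T^{(q)}_{P,m})=\int_X\mathrm{Tr}\bigl(\Pi_I(T^{(q)}_{P,m})(x,x)\bigr)\,\mathrm{dV}_X$ and sandwich this integral between two integrals of smooth functional calculus kernels. Choose test functions $\tau_{-}\in\mathcal{C}^\infty_c(I)$ with $0\notin\mathrm{supp}\,\tau_{-}$ and $\tau_{+}\in\mathcal{C}^\infty_c(\mathbb R)$ with $\tau_{+}\equiv 1$ on a neighborhood of $\ol I$ and $0\notin\mathrm{supp}\,\tau_{+}$. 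Integrating the pointwise bounds of Lemma~\ref{l-gue211215yyd} and Lemma~\ref{l-gue211215yydI} over $X$ gives
\[
\int_X\mathrm{Tr}\bigl(2\tau_{-}(T^{(q)}_{P,m})-\tau_{-}(T^{(q)}_{P,m})^2\bigr)(x,x)\,\mathrm{dV}_X\;\le\;\dim E_I(T^{(q)}_{P,m})\;\le\;\int_X\mathrm{Tr}\,\tau_{+}(T^{(q)}_{P,m})^2(x,x)\,\mathrm{dV}_X.
\]

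Next I pass to the limit in $m$. Since $\tau(T^{(q)}_{P,m})^2=\tau^2(T^{(q)}_{P,m})$ in the functional calculus, applying Corollary~\ref{c-gue211214yyds} to $\tau_{\pm}$ and to $\tau_{\pm}^2$, and using that the rank-one orthogonal projection $\pi_{x,n_-}$ has trace $1$, I obtain the pointwise limit, valid on the dense open set $X_{\mathrm{reg}}$,
\[
\lim_{m\To+\infty}m^{-n}\mathrm{Tr}\bigl(\tau(T^{(q)}_{P,m})(x,x)\bigr)=\tfrac{1}{2}\pi^{-n-1}\abs{\det\mathcal L_x}\,\tau(f(x)),\qquad f(x):=\sigma^0_P(x,-\omega_0(x)).
\]
The complement $X\setminus X_{\mathrm{reg}}$ is the countable union, over $k\ge 2$, of the fixed-point submanifolds of $e^{2\pi i/k}$; these have positive codimension because the $S^1$-action is locally free and $X_{\mathrm{reg}}$ is non-empty in the connected manifold $X$, hence $X\setminus X_{\mathrm{reg}}$ is $\mathrm{dV}_X$-null. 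Combined with the uniform estimate $\abs{\tau(T^{(q)}_{P,m})(x,x)}\le Cm^n$ on $X$ from Corollary~\ref{c-gue211214yyds}, the dominated convergence theorem applied to both integrals of the sandwich yields
\[
\liminf_{m\To+\infty} m^{-n}\dim E_I(T^{(q)}_{P,m})\;\ge\;\tfrac{1}{2}\pi^{-n-1}\int_X(2\tau_{-}-\tau_{-}^2)(f(x))\abs{\det\mathcal L_x}\mathrm{dV}_X,
\]
\[
\limsup_{m\To+\infty} m^{-n}\dim E_I(T^{(q)}_{P,m})\;\le\;\tfrac{1}{2}\pi^{-n-1}\int_X\tau_{+}^2(f(x))\abs{\det\mathcal L_x}\mathrm{dV}_X.
\]

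Finally I let $\tau_{-}\nearrow\mathbf 1_I$ monotonically from inside $I$ and $\tau_{+}\searrow\mathbf 1_{\ol I}$ monotonically from outside. The identity $2\tau_{-}-\tau_{-}^2=1-(1-\tau_{-})^2$ shows the lower-bound integrand increases monotonically to $\mathbf 1_I\circ f$, while $\tau_{+}^2$ decreases monotonically to $\mathbf 1_{\ol I}\circ f$, so monotone convergence collapses the bounds to
\[
\tfrac{1}{2}\pi^{-n-1}\!\!\int_{\set{f\in I}}\!\abs{\det\mathcal L_x}\mathrm{dV}_X\;\le\;\liminf_m m^{-n}\dim E_I(T^{(q)}_{P,m})\;\le\;\limsup_m m^{-n}\dim E_I(T^{(q)}_{P,m})\;\le\;\tfrac{1}{2}\pi^{-n-1}\!\!\int_{\set{f\in\ol I}}\!\abs{\det\mathcal L_x}\mathrm{dV}_X.
\]
The main (mildly technical) obstacle is to close the gap at the boundary, namely to show $\abs{\det\mathcal L_x}\mathrm{dV}_X(\set{f\in\partial I})=0$. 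I plan to handle this by monotonicity of $I\mapsto\dim E_I(T^{(q)}_{P,m})$: since $I=(a,b)$ is open, $I=\bigcup_{\delta>0}(a+\delta,b-\delta)$, so I replace $I$ by slightly shrunk/enlarged intervals whose endpoints are regular values of $f$ (abundant by Sard's theorem), apply the sandwich just established on these regular sub-/super-intervals whose endpoints give $\mathrm{dV}_X$-null smooth hypersurfaces as level sets, and then let these auxiliary endpoints converge to $a$ and $b$; this eliminates the boundary contribution and produces the claimed identity.
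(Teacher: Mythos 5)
Your proposal follows the paper's proof essentially step for step: the same sandwich of ${\rm Tr\,}(\Pi_I(T^{(q)}_{P,m})(x,x))$ between $2{\rm Tr\,}(\hat\tau(T^{(q)}_{P,m})(x,x))-{\rm Tr\,}(\hat\tau(T^{(q)}_{P,m})^2(x,x))$ and ${\rm Tr\,}(\tau(T^{(q)}_{P,m})^2(x,x))$ coming from Lemma~\ref{l-gue211215yyd} and Lemma~\ref{l-gue211215yydI}, integration over $X$, the pointwise asymptotics \eqref{e-gue211214yydx} together with the uniform bound \eqref{e-gue211214yydw} to pass to the limit in $m$, and finally monotone approximation of the indicator function by the test functions. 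Up to that point your argument is correct and coincides with the paper's.

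The one place where you go beyond the paper is the boundary term, and there your proposed fix does not close the gap. Monotonicity plus Sard does recover the lower bound, since the union of the shrunk regular-endpoint intervals is $I$ itself. But for the upper bound the intersection of the enlarged intervals $(a-\delta,b+\delta')$ is $[a,b]=\ol I$, so even with regular auxiliary endpoints you only obtain $\limsup_m m^{-n}{\rm dim\,}E_I(T^{(q)}_{P,m})\leq\frac{1}{2}\pi^{-n-1}\int_{\set{f\in\ol I}}\abs{\det\mathcal L_x}\mathrm{dV}_X$, which is exactly the bound you already had. Sard's theorem controls the Lebesgue measure of the set of critical \emph{values} of $f$ in $\mathbb R$; it says nothing about the $\mathrm{dV}_X$-measure of the two specific level sets $f^{-1}(a)$ and $f^{-1}(b)$, which can be positive (for instance if $\sigma^0_P(x,-\omega_0(x))$ is locally constant equal to $a$ on an open set). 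So the required identity $\int_{\set{f\in\partial I}}\abs{\det\mathcal L_x}\mathrm{dV}_X=0$ is not established. To be fair, the paper's own proof silently makes the same identification: the functions $\tau_j$ there, being $\equiv1$ on a neighborhood of $I$, converge pointwise to the indicator of $\ol I$ rather than of $I$ at the endpoints, so the published argument also needs, and does not verify, that $f^{-1}(\partial I)$ is $\mathrm{dV}_X$-null. Either this should be added as a hypothesis on $I$ (e.g.\ that $\partial I$ consists of values whose level sets are null) or it must be checked in the situations where the theorem is applied.
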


\begin{proof}
 Let $\set{\tau_j}^{+\infty}_{j=1}\subset\mathcal{C}^\infty_c(\mathbb R)$ with $0\notin{\rm supp\,}\tau_j$, $\tau_j\equiv1$ on some neighborhood of $I$, for every $j$ and 
 $\lim_{j\To+\infty}\tau_j=1_I$ pointwise, where $1_I(x)=1$ if $x\in I$, $1_I(x)=0$ if $x\notin I$.  Let $\set{\hat\tau_j}^{+\infty}_{j=1}\subset\mathcal{C}^\infty_c(I)$ with $0\notin{\rm supp\,}\hat\tau_j$, for every $j$ and 
 $\lim_{j\To+\infty}\hat\tau_j=1_I$ pointwise. From \eqref{e-gue211215yydI} and \eqref{e-gue211215yydq}, we have for every $j=1,2,\ldots$, 
 \begin{equation}\label{e-gue211215yyds}
 \begin{split}
& m^{-n}\int_X\Bigr(2{\rm Tr\,}(\hat\tau_j(T^{(q)}_{P,m})(x,x))-{\rm Tr\,}(\hat\tau_j(T^{(q)}_{P,m})^2(x,x))\Bigr)\mathrm{dV}_X(x)\\
 &\leq m^{-n}\int_X{\rm Tr\,}(\Pi_I(T^{(q)}_{P,m})(x,x))\mathrm{dV}_X(x)\\
 &\leq m^{-n}\int_X{\rm Tr\,}(\tau_j(T^{(q)}_{P,m})^2(x,x))\mathrm{dV}_X(x). 
 \end{split}
 \end{equation}
 Note that 
 \[{\rm dim\,}E_I(T^{(q)}_{P,m})=\int_X{\rm Tr\,}(\Pi_I(T^{(q)}_{P,m})(x,x))\mathrm{dV}_X(x).\]
 From this observation, \eqref{e-gue211214yydx}, \eqref{e-gue211215yyds} and take $j\To+\infty$ in \eqref{e-gue211215yyds}, the theorem follows. 
\end{proof}

\subsection{Quantization commutes with reduction on CR manifolds with circle action}\label{s-gue211216yydI} 

Now, we assume that $X$ admits a compact Lie group action $G$ of dimension $d$. We assume that Assumption~\ref{a-gue170123I} and Assumption~\ref{a-gue170123II} hold. We assume further that 

\begin{ass}\label{a-gue170128} The Reeb vector field $R$ is transversal to the space $\underline{\mathfrak{g}}$ at every point $p\in\mu^{-1}(0)$. Furthermore, we assume that the circle action commutes with the one of $G$ on $X$:
	\begin{equation}\label{e-gue170111ryII}
		e^{i\theta}\circ g\circ x=g\circ e^{i\theta}\circ x,\  \ 
		\mbox{for every $x\in X$, $\theta\in[0,2\pi[$, $g\in G$}\,, 
	\end{equation}
Eventually, we assume
	\[
	\mbox{$G\times S^1$ acts locally free near $\mu^{-1}(0)$}\,. 
	\]
\end{ass} 

Fix a $S^1\times G$-invariant smooth Hermitian metric $\langle\, \cdot \,|\, \cdot \,\rangle$ on $TX\otimes \mathbb{C}$ so that $T^{1,0}X$ is orthogonal to $T^{0,1}X$, $\underline{\mathfrak{g}}$ is orthogonal to $HY\cap JHY$ at every point of $Y$, $\langle \, u \,|\, v \, \rangle$ is real if $u, v$ are real tangent vectors and $\langle\,R\,|\,R\,\rangle=1$. For every $m\in \mathbb{Z}$, we set 
\[\Omega^{0,q}_m(X)^G:=\set{u\in\Omega^{0,q}(X)^G;\, (e^{i\theta})^*u=e^{im\theta}u,\ \ \forall e^{i\theta}\in S^1}.\]
 Let $L^2_{(0,q),m}(X)^G$ be
the completion of $\Omega^{0,q}_m(X)^G$ with respect to $(\,\cdot\,|\,\cdot\,)$. 
 Put 
$({\rm Ker\,}\Box^{q}_b)^G_m:=({\rm Ker\,}\Box^{q}_b)^G\cap L^2_{(0,q),m}(X).$
Let 
\[H^q_{b,m}(X)^G:=({\rm Ker\,}\Box^{q}_b)^G_m.\] 
 From Corollary~\ref{c-gue211213yydam}, we get

\begin{thm}\label{t-gue211216yyd}
With the same notations and assumptions above, if $n_-=n_+$, $n_{X_G,-}=n_{X_G,+}$ or $n_-\neq n_+$, $n_{X_G,-}\neq n_{X_G,+}$, then
\[H^{n_-}_{b,m}(X)^G\cong H^{n_{X_G,-}}_{b,m}(X_G)\]
for $\abs{m}\gg1$. 

If $n_-=n_+$ and $n_{X_G,-}\neq n_{X_G,+}$, then 
\[H^{n_-}_{b,m}(X)^G\cong H^{n_{X_G,-}}_{b,m}(X_G)\oplus H^{n_{X_G,+}}_{b,m}(X_G)\]
for $\abs{m}\gg1$. 

If $n_-\neq n_+$ and $n_{X_G,-}=n_{X_G,+}$, then 
\[H^{n_-}_{b,m}(X)^G\oplus H^{n_+}_{b,m}(X)^G\cong H^{n_{X_G,-}}_{b,m}(X_G)\]
for $\abs{m}\gg1$. 
\end{thm}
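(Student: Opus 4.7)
The plan is to deduce Theorem~\ref{t-gue211216yyd} from Corollary~\ref{c-gue211213yydam} by exploiting the additional $S^1$-equivariance granted by Assumption~\ref{a-gue170128}, which forces the two group actions to commute.

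First, I would verify that the Fredholm map $\sigma$ constructed in \eqref{e-gue180308II}, \eqref{e-gue180308IIa}, or \eqref{e-gue211214yyd} is $S^1$-equivariant. Since the $S^1$ and $G$ actions commute and the Hermitian metric $\langle\,\cdot\,|\,\cdot\,\rangle$ is chosen $S^1\times G$-invariant, the submanifold $Y=\mu^{-1}(0)$ is $S^1$-invariant, the quotient $X_G$ inherits an $S^1$-action, and every building block of $\sigma$---the pull-back $\iota^*$, the projections $\pi_{x,n_\mp}$ and $\hat p_\mp$, the identification $\iota_G$, the scalar factor $f(x)=\sqrt{V_{{\rm eff\,}}(x)}\abs{\det R_x}^{-1/4}$, and the Toeplitz functional calculus $\tau(T^{(n_{X_G,\mp})}_{P_{X_G}})$---is manufactured from $S^1$-invariant data. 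The auxiliary elliptic operator $E$ of order $-d/4$ on $X_G$ may be taken $S^1$-invariant by replacing it with its $S^1$-average, which does not change its principal symbol. Consequently $\sigma$ commutes with $S^1$ and therefore restricts to each isotype:
\[\sigma\colon H^{n_-}_{b,m}(X)^G\To H^{n_{X_G,-}}_{b,m}(X_G),\]
and analogously for \eqref{e-gue180308IIa} and \eqref{e-gue211214yyd}.

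Next, I would restrict the Fredholm conclusion of Corollary~\ref{c-gue211213yydam} to each $m$-isotype. By the equivariance of $\sigma$, both ${\rm Ker\,}\sigma$ and ${\rm Coker\,}\sigma$ inherit $S^1$-actions and therefore decompose as Hilbert direct sums of their $m$-th isotypic components. Since the corollary asserts that both spaces are finite-dimensional, only finitely many of these isotypes can be non-zero; hence there exists $M>0$ such that for all $\abs{m}\geq M$
\[{\rm Ker\,}\sigma\cap H^{n_-}_{b,m}(X)^G=\set{0}\quad\text{and}\quad{\rm Coker\,}\sigma\cap H^{n_{X_G,-}}_{b,m}(X_G)=\set{0},\]
which gives the desired isomorphism for $\abs{m}\gg1$ in the first case. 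The remaining two cases ($n_-=n_+$ with $n_{X_G,-}\neq n_{X_G,+}$, and $n_-\neq n_+$ with $n_{X_G,-}=n_{X_G,+}$) follow word-for-word by replacing $\sigma$ by the map from \eqref{e-gue180308IIa} or \eqref{e-gue211214yyd} respectively, using the Fredholm statements provided by the corresponding clauses of Corollary~\ref{c-gue211213yydam}.

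The main technical obstacle will be the rigorous verification of the $S^1$-equivariance of $\sigma$, in particular of the projections $\pi_{x,n_\mp}$ and $\hat p_\mp$, which are a priori defined via a choice of orthonormal frame $\set{W_j}$ diagonalizing the Levi form, and of the scalar $f$. The correct viewpoint is that $\pi_{x,n_\mp}$ are intrinsic orthogonal projections onto spectral subspaces of the Levi form (taking top-dimensional exterior products of eigenspaces corresponding to negative, resp.\ positive, eigenvalues) and so are automatically $S^1$-equivariant once the metric is $S^1$-invariant; the same applies to $\hat p_\mp$ on $Y$. For $f$ one uses that $V_{{\rm eff\,}}(x)$ and $\det R_x$ are $S^1$-invariant, which follows from the fact that $S^1$ preserves each $G$-orbit and the metric. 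Once equivariance is in hand, the remainder of the argument is the standard principle that an equivariant Fredholm map between representations whose weight spaces are discrete induces an isomorphism on all but finitely many weight spaces.
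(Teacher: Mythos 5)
Your proposal is correct and follows essentially the same route as the paper, which simply derives Theorem~\ref{t-gue211216yyd} from Corollary~\ref{c-gue211213yydam} without spelling out the details. You supply exactly the implicit steps: the $S^1$-equivariance of $\sigma$ (guaranteed by Assumption~\ref{a-gue170128} and the $S^1\times G$-invariant metric, after averaging $E$), and the standard fact that a finite-dimensional kernel and cokernel of an equivariant Fredholm map meet only finitely many $S^1$-isotypes.
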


\subsection{Semi-classical spectral dimensions for invariant Toeplitz operators on CR manifolds with circle action}\label{s-gue211216yydII} 

The $m$-th $G$-invariant Szeg\H{o} projection is the orthogonal projection 
$$S^{(q)}_{G,m}:L^2_{(0,q)}(X)\To ({\rm Ker\,}\Box^{(q)}_b)^G_m$$
with respect to $(\,\cdot\,|\,\cdot\,)$.
Let $P\in L^{0}_{{\rm cl\,}}(X,T^{*0,q}X\boxtimes(T^{*0,q}X)^*)^{G\times S^1}$ be a self-adjoint $G\times S^1$-invariant pseudodifferential operator on $X$ with scalar principal symbol. Put 
\[T^{(q)}_{P,G,m}:=S^{(q)}_{G,m}\circ P\circ S^{(q)}_{G,m}: L^2_{(0,q)}(X)\To L^2_{(0,q),m}(X)^G.\] 

We can now prove the following semi-classical spectral dimensions for $G$-invariant Toeplitz operators

\begin{thm}\label{t-gue211215yydm}
With the notations and assumptions above, let $I\subset\mathbb R$ be an open bounded interval with $0\notin\overline{I}$. Suppose that $X$ is connected and $X_{{\rm reg\,}}\neq\emptyset$.. 
We have 
\begin{equation}\label{e-gue211215yydrm}
\lim_{m\To+\infty}m^{-n+d}{\rm dim\,}E_I(T^{(q)}_{P,G,m})=\frac{1}{2}\pi^{-n+d-1}\int_{\set{x\in X_G;\, \sigma^0_{P_{X_G}}(x,-\omega_0(x))\in I}}\abs{{\rm det\,}\mathcal L_{X_G,x}}\mathrm{dV}_{X_G}(x). 
\end{equation}
\end{thm}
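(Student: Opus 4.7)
The plan is to mirror the proof of Theorem~\ref{t-gue211215yyd} in the $G$-invariant setting. The two essential ingredients are a pointwise diagonal asymptotic for $\tau(T^{(q)}_{P,G,m})(x,x)$ at scale $m^{n-d}$, together with the sandwich estimates of Lemmas~\ref{l-gue211215yyd}--\ref{l-gue211215yydI}, which only use self-adjointness and positivity and hence apply verbatim to $T^{(q)}_{P,G,m}$.

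\textbf{Step 1} (diagonal asymptotic). Using the commutativity of the $G$- and $S^1$-actions (Assumption~\ref{a-gue170128}) and the definition of $S^{(q)}_{G,m}$, we have
\begin{equation*}
\tau(T^{(q)}_{P,G,m})(x,y) = \frac{1}{2\pi}\int_0^{2\pi} \tau(T^{(q)}_{P,G})(x, e^{i\theta}y)\,e^{im\theta}\,d\theta.
\end{equation*}
Plugging in the local Fourier integral representation \eqref{e-gue210802yydam} (or its $n_-=n_+$ counterpart from Theorem~\ref{thm:toeplitz-n}) near a point $p\in\mu^{-1}(0)\cap X_{\mathrm{reg}}$, and applying complex stationary phase of Melin--Sj\"ostrand in the variables $(t,\theta)$ near $t=m$, $\theta\in N_p$, exactly as in the proof of Theorem~\ref{t-gue211214yyds}, one obtains an asymptotic expansion of $\tau(T^{(q)}_{P,G,m})(x,y)$ of scale $m^{n-d}$ (the extra $m^{-d/2}$ over the non-equivariant scale comes jointly from the reduction of symbolic order from $n$ to $n-d/2$ and the further gain from the $\theta$-integration). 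Evaluating on the diagonal at $x\in\mu^{-1}(0)\cap X_{\mathrm{reg}}$ using the leading coefficient from~\eqref{e-gue210802yydIn} yields
\begin{equation*}
\lim_{m\to+\infty} m^{-n+d}\,\mathrm{Tr}\,\tau(T^{(q)}_{P,G,m})(x,x) = F(x),
\end{equation*}
where $F(x)$ is an explicit $G$-invariant function involving $|\det\mathcal{L}_x|$, $V_{\mathrm{eff}}(x)$, $|\det R_x|^{1/2}$ and $\tau(\sigma^0_{P_{X_G}}(\bar x,-\omega_0(\bar x)))$, together with a uniform upper bound $|\mathrm{Tr}\,\tau(T^{(q)}_{P,G,m})(x,x)|\leq C\,m^{n-d}$ valid on all of $X$.

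\textbf{Step 2} (integration and reduction, then sandwich). Since $F$ is $G\times S^1$-invariant and (modulo $O(m^{-\infty})$) concentrated near $\mu^{-1}(0)$, the integral $\int_X F\,dV_X$ reduces via the tubular neighborhood of $\mu^{-1}(0)$ and the fibration $\mu^{-1}(0)\to X_G$ with fibers of volume $V_{\mathrm{eff}}(\bar x)$ to an integral over $X_G$. The compatibility identity between Levi forms at a point $x\in\mu^{-1}(0)$, expressing $|\det\mathcal{L}_x|$ in terms of $|\det R_x|^{1/2}$ and $|\det\mathcal{L}_{X_G,\bar x}|$, which follows from the orthogonal decomposition $T^{1,0}X|_{\mu^{-1}(0)}=(\underline{\mathfrak{g}}+iJ\underline{\mathfrak{g}})\oplus T^{1,0}X_G$ of Section~\ref{s-gue211213yyds}, causes the $V_{\mathrm{eff}}$, $|\det R_x|^{1/2}$ and transversal-Jacobian factors to collapse into the clean constant $\tfrac{1}{2}\pi^{-(n-d)-1}|\det\mathcal{L}_{X_G,\bar x}|$ on $X_G$. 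With the pointwise asymptotic and uniform bound in hand, pick sequences $\tau_j,\hat\tau_j\in\mathcal{C}^\infty_c(\mathbb{R}\setminus\{0\})$ with $\hat\tau_j\in\mathcal{C}^\infty_c(I)$, $\tau_j\equiv 1$ near $\overline{I}$, and $\tau_j,\hat\tau_j\to 1_I$ pointwise, apply Lemmas~\ref{l-gue211215yyd} and~\ref{l-gue211215yydI} with $T^{(q)}_{P,G,m}$ in place of $T^{(q)}_{P,m}$, integrate in $x$, and pass to the limits $m\to+\infty$ (by dominated convergence against $C m^{n-d}$) and then $j\to+\infty$ (by monotone convergence) to reach~\eqref{e-gue211215yydrm}.

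\textbf{Main obstacle.} The bulk of the work is Step~1, namely carrying out the combined complex stationary-phase computation that couples the $G$-invariant FIO analysis of Theorem~\ref{thm:toeplitz-m} with the $S^1$-Fourier projection, and extracting the correct leading constant. The delicate point is verifying that the leading coefficient produced by stationary phase in $(t,\theta)$ is exactly the one predicted by the Toeplitz-on-$X_G$ formula: the $|\det R_x|^{-1/2}/V_{\mathrm{eff}}(x)$ prefactor in~\eqref{e-gue210802yydIn} must cancel precisely against the Jacobian of the fibration $\mu^{-1}(0)\to X_G$ and the $G$-orbit volume, a cancellation that uses the free-action assumption and the orthogonality properties of the chosen $G$-invariant Hermitian metric on $Y$.
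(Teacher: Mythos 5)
Your overall strategy (recompute everything upstairs on $X$ and then push down to $X_G$) is not the paper's, and as written it contains a concrete error in Step~1. By Theorem~\ref{thm:toeplitz-m} the symbol of $\tau(T^{(q)}_{P,G})$ has order $n-d/2$, and the stationary phase in $(t,\theta)$ proceeds exactly as in Theorem~\ref{t-gue211214yyds} — there is no ``further gain from the $\theta$-integration'' beyond what already occurs in the non-equivariant case. Hence the diagonal value $\mathrm{Tr}\,\tau(T^{(q)}_{P,G,m})(x,x)$ at a point $x\in\mu^{-1}(0)$ is of order $m^{n-d/2}$, not $m^{n-d}$. The remaining factor $m^{-d/2}$ appears only after integrating over $X$: by \eqref{e-gue170126} the diagonal kernel decays like $e^{-cm|\hat x''|^2}$ in the $d$ directions transversal to $\mu^{-1}(0)$, and the transversal Gaussian integral supplies $m^{-d/2}$. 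Consequently your claimed pointwise limit $F(x)$ does not exist at scale $m^{-n+d}$ (it would be $+\infty$ on $\mu^{-1}(0)$ and $0$ elsewhere), the uniform bound $Cm^{n-d}$ on all of $X$ is false (the correct one is $Cm^{n-d/2}$), and the dominated-convergence step in Step~2 therefore does not go through. The plan could be repaired by integrating the trace first and carrying out the transversal Gaussian integral together with the Levi-form/Jacobian bookkeeping you sketch in Step~2 (in the style of the $G$-invariant Szeg\H{o} kernel asymptotics of \cite{hsiaohuang}), but that is a substantial reorganization, not a cosmetic fix.

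The paper's proof avoids this computation entirely. It takes $\tau\in\mathcal{C}^\infty_c(I_1)$ with $\tau\equiv1$ near $I$ (and $\hat\tau\in\mathcal{C}^\infty_c(I)$ with $\hat\tau\equiv 1$ near $\hat I_1\subset I$) and uses the quantization-commutes-with-reduction map of Theorem~\ref{t-gue211214yydI}, restricted to the $m$-th Fourier components, to compare $\dim E_I(T^{(q)}_{P,G,m})$ with the corresponding spectral dimension of the reduced Toeplitz operator $T^{(n_{X_G,-})}_{P_{X_G},m}$ on $X_G$, the discrepancy being negligible at scale $m^{-n+d}$. It then applies the already-established non-equivariant Weyl law \eqref{e-gue211215yydr} (Theorem~\ref{t-gue211215yyd}) to the CR manifold $X_G$ of dimension $2(n-d)+1$, and squeezes $I_1\downarrow I$ and $\hat I_1\uparrow I$. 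This is why the constants $\frac{1}{2}\pi^{-n+d-1}$ and $|\det\mathcal{L}_{X_G,x}|$ come out for free, with no need to verify the cancellation of $V_{\mathrm{eff}}$, $|\det R_x|^{-1/2}$ and the fibration Jacobian that you correctly identify as the main obstacle of your route.
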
 

\begin{proof}
Let $I_1$ be an open bounded interval with $0\notin\overline{I_1}$, $I\subset I_1$, $I\neq I_1$. Let $\tau\in\mathcal{C}^\infty_c(I_1)$ with $0\notin{\rm supp\,}\tau$ and $\tau\equiv1$ on some neighborhood of $I$. From 
Theorem~\ref{t-gue211214yydI} and notice that ${\rm dim\,}E_\tau(T^{(n_+)}_{P,G,m})={\rm dim\,}E_\tau(T^{(n_{X_G,+})}_{P,G,m})=0$ if $m\gg1$, we conclude that if $m\gg1$, we have 
\begin{equation}\label{e-gue211216yyda}
{\rm dim\,}E_I(T^{(q)}_{P,G,m})\leq{\rm dim\,}E_{\tau}(T^{(q)}_{P,G,m})\leq{\rm dim\,}E_{I_1}(T^{(n_{X_G,-})}_{P,G,m})\,.
\end{equation}
From \eqref{e-gue211215yydr} and \eqref{e-gue211216yyda}, we get 
\begin{equation}\label{e-gue211216yydb}
\begin{split}
&\lim_{m\To+\infty}m^{-n+d}{\rm dim\,}E_I(T^{(q)}_{P,G,m})\\
&\leq\lim_{m\To+\infty}m^{-n+d}{\rm dim\,}E_{I_1}(T^{(n_{X_G,-})}_{P,G,m})=
\frac{1}{2}\pi^{-n+d-1}\int_{\widetilde{X_G}}\abs{{\rm det\,}\mathcal L_{X_G,x}}\mathrm{dV}_{X_G}(x)
\end{split}
\end{equation}
where
\[\widetilde{X_G}:=\set{x\in X_G;\, \sigma^0_{P_{X_G}}(x,-\omega_0(x))\in I_1}\,.\]

Let $\hat I_1$ be an open interval with $0\notin\overline{\hat I_1}$, $\hat I_1\subset I$, $\hat I_1\neq I$. Let $\hat\tau\in\mathcal{C}^\infty_c(I)$ with $0\notin{\rm supp\,}\hat\tau$ and $\hat\tau\equiv1$ on some neighborhood of $\hat I_1$. From Theorem~\ref{t-gue211214yydI} again, if $m\gg1$, we have 
\begin{equation}\label{e-gue211216yydc}
{\rm dim\,}E_I(T^{(q)}_{P,G,m})\geq {\rm dim\,}E_{\hat\tau}(T^{(q)}_{P,G,m})\geq{\rm dim\,}E_{\hat I_1}(T^{(n_{X_G,-})}_{P,G,m}).
\end{equation}
From \eqref{e-gue211215yydr} and \eqref{e-gue211216yydc}, we get 
\begin{equation}\label{e-gue211216yyde}
\begin{split}
&\lim_{m\To+\infty}m^{-n+d}{\rm dim\,}E_I(T^{(q)}_{P,G,m})\\
&\geq\lim_{m\To+\infty}m^{-n+d}{\rm dim\,}E_{\hat I_1}(T^{(n_{X_G,-})}_{P,G,m})=
\frac{1}{2}\pi^{-n+d-1}\int_{\widehat{X_G}}\abs{{\rm det\,}\mathcal L_{X_G,x}}\mathrm{dV}_{X_G}(x)
\end{split}
\end{equation}
where
\[\widehat{X_G}:= \set{x\in X_G;\, \sigma^0_{P_{X_G}}(x,-\omega_0(x))\in\hat I_1} \]
From \eqref{e-gue211216yydb} and \eqref{e-gue211216yyde}, we get \eqref{e-gue211215yydrm}. 
\end{proof} 

\textbf{Acknowledgements:} This project was started during the first author’s postdoctoral fellowship at the National Center for Theoretical Sciences in Taiwan; we thank the Center for the support. Chin-Yu Hsiao was partially supported by Taiwan Ministry of Science and Technology projects  108-2115-M-001-012-MY5, 109-2923-M-001-010-MY4.


\begin{thebibliography}{Dillo99}
 
\bibitem[BG]{bg} L. Boutet de Monvel, V. Guillemin, {\em The spectral theory of Toeplitz operators}, Annals of Mathematics Studies, \textbf{99} (1981), Princeton University Press, Princeton, NJ; University of Tokyo Press, Tokyo
 
\bibitem[BS]{bs} L. Boutet de Monvel, J. Sj\"ostrand, {\em Sur la singularit\'e des noyaux de Bergman et de Szeg\"o}, Ast\'erisque \textbf{34-35} (1976), 123-164 

\bibitem[BMS]{bms} M. Bordemann, E. Meinrenken, M. Schlichenmaier, { \em Toeplitz quantization of K\"ahler manifolds and $gl(N)$, $N\rightarrow +\infty$ limits}, Commun. Math. Phys. 165, No. 2, 281-296 (1994).

\bibitem[D]{d} E. B. Davies, {\em Spectral Theory and Differential Operators}, Cambridge University Press, 1995,
DOI:https://doi.org/10.1017/CBO9780511623721

\bibitem[D]{Du} J. J. Duistermaat, {\em The Heat Kernel Lefschetz Fixed Point Formula for the Spin-c Dirac Operator}, Birkh\"auser; 2011th edition

\bibitem[GH]{gh} A. Galasso, C.-Y. Hsiao {\em Toeplitz operators on CR manifolds and group actions}, https://arxiv.org/abs/2108.11061

\bibitem[Ho]{hor} L. H\"ormander, {\em Linear Partial Differential Operators} Grundlehren der Mathematischen Wissenschaften, Springer-Verlag, Berlin

\bibitem[Hs]{hsiao} C.-Y. Hsiao, { \em Projections in several complex variables}, M\'emoires de la Soci\'et\'e Math\'ematique de France \textbf{123} (2010), 1-136

\bibitem[HH]{hsiaohuang} C.-Y. Hsiao, R.-T. Huang, {\em G-invariant Szeg\H{o} kernel asymptotics and CR reduction}, Calc. Var. Partial Differential Equations \textbf{60}(2021), no. 1, Paper No. 47, 48 pp. 

\bibitem[HM]{HM14} C.-Y. Hsiao and G. Marinescu, {\em On the singularities of the Szeg\H{o} projections on lower energy forms}, J. Differential Geom. 107 (2017), no. 1, 83--155.

\bibitem[HMM]{hmm} C.-Y. Hsiao, X. Ma, G. Marinescu, {\em Geometric quantization on CR manifolds}, 2020-12. (arXiv:1906.05627, 47 pages)

\bibitem[Kohn]{Koh86}
J. J. Kohn, {\em The range of the tangential {C}auchy-{R}iemann operator}, 
Duke Math.\ J.\ \textbf{53} (1986), No.\ 2, 307--562. 

\bibitem[Ko]{k} B. Kostant, {\em Quantization and unitary representations. I. Prequantization}, Lectures in modern analysis and applications, III, pp. 87--208. Lecture Notes in Math., Vol. \textbf{170}, Springer, Berlin, 1970

\bibitem[MZ]{mz} X. Ma, W. Zhang, {\em Bergman Kernels and symplectic reduction}, Ast\'erisque \textbf{318}, Paris: Soci\'et\' Math\'matique de France. viii, 154 p. (2008).

\bibitem[MM]{mm} Ma, X., Marinescu, G., {\em Holomorphic Morse Inequalities and Bergman Kernels}, Progress in Mathematics, 254. Birkh\"auser, Basel, 2007.

\bibitem[MS]{ms} A. Melin and J. Sj\"ostrand, {\em Fourier integral operators with complex valued phase functions}, Springer Lecture Notes in Math \textbf{459} (1074), pp. 120-223, Springer-Verlag, Berlin

\bibitem[P]{p} R, Paoletti, {Spectral and eigenfunction asymptotics in Toeplitz quantization} in Complex and symplectic geometry. Based on the presentations at the INdAM meeting ``Complex and symplectic geometry'', Cortona, Italy, June 12–18, 2016. Springer INdAM Series 21, 179-190 (2017).

\bibitem[S]{schli} M. Schlichenmaier, {\em Deformation quantization of compact K\"ahler manifolds by Berezin-Toeplitz quantization}, Conf\'erence Mosh\'e Flato 1999: Quantization, deformations, and symmetries, Dijon, France, September 5-8, 1999. Volume II. Dordrecht: Kluwer Academic Publishers. Math. Phys. Stud. 22, 289-306 (2000).


\end{thebibliography}
\end{document}